\newtheorem{definition}{Definition}
\newtheorem{theorem}{Theorem}
\newtheorem{lemma}{Lemma}
\newtheorem{proposition}{Proposition}
\newtheorem{corollary}{Corollary}
\newtheorem{remark}{Remark}
\newtheorem{assumption}{Assumption}
\def\BibTeX{{\rm B\kern-.05em{\sc i\kern-.025em b}\kern-.08em
		T\kern-.1667em\lower.7ex\hbox{E}\kern-.125emX}}
\begin{document}
\title{A cutting-surface consensus approach for distributed robust optimization of multi-agent systems}
\author{Jun Fu, \IEEEmembership{Senior Member, IEEE}, and Xunhao Wu
\thanks{This paper was supported in part by the National Nature Science Foundation of China under Grant 61825301. {The authors contributed equally to this work. (Corresponding author: Jun Fu.)}}
\thanks{The authors are with the State Key Laboratory of Synthetical Automation for Process Industries, Northeastern University, Shenyang 110819, China (e-mail: junfu@mail.neu.edu.cn, neuwxh2102043@163.com).}}

\maketitle

\begin{abstract}
  A novel and fully distributed optimization method is proposed for the distributed robust convex program (DRCP) over a time-varying unbalanced directed network under the uniformly jointly strongly connected (UJSC) assumption. Firstly, an approximated DRCP (ADRCP) is introduced by discretizing the semi-infinite constraints into a finite number of inequality constraints to ensure tractability and restricting the right-hand side of the constraints with a positive parameter to ensure a feasible solution for (DRCP) can be obtained. This problem is iteratively solved by a distributed projected gradient algorithm proposed in this paper, which is based on epigraphic reformulation and gradient projected operations. Secondly, a cutting-surface consensus approach is proposed for locating an approximately optimal consensus solution of the DRCP with guaranteed local feasibility for each agent. This approach is based on iteratively approximating the DRCP by successively reducing the restriction parameter of the right-hand constraints and adding the cutting-surfaces into the existing finite set of constraints. Thirdly, to ensure finite-time termination of the distributed optimization, a distributed termination algorithm is developed based on consensus and zeroth-order stopping conditions under UJSC graphs. Fourthly, it is proved that the cutting-surface consensus approach terminates finitely and yields a feasible and approximate optimal solution for each agent. Finally, the effectiveness of the approach is illustrated through a numerical example.
\end{abstract}

\begin{IEEEkeywords}
Distributed robust convex program, time-varying unbalanced directed graphs, semi-infinite constraints, local feasibility, finite-time termination
\end{IEEEkeywords}

\section{Introduction}
\label{sec:introduction}
\IEEEPARstart{D}{istributed} optimization refers to an optimization strategy for multi-agent systems in which agents agree on a consensus and feasible decision that guarantees the optimality of a global objective through local computation and agent communication. Distributed optimization has been extensively studied \cite{yang2019survey,nedic2018distributed,molzahn2017survey} and has been applied in a wide range of fields such as multi-robot system scheduling \cite{FANG2019103}, unmanned aerial vehicle (UAV) formation flight \cite{9869349}, cooperative localization in sensor networks \cite{8755514}, network resource allocation \cite{iiduka2018distributed}, and economic dispatch of smart grids \cite{9484074}. Uncertain distributed optimization is considerably important, e.g., to guarantee stability or to ensure safety \cite{ben2009robust}. Uncertainties considered in distributed optimization include inaccurate gradient information \cite{fallah2019robust}, communication noise \cite{iakovidou2022s}, communication time delay \cite{zhao2018analysis}, data packet loss \cite{wu2017distributed}, constraints affected by uncertainty, etc. This article focuses on distributed optimization problems with uncertain constraints.
\par
Techniques for dealing with distributed optimization with uncertain constraints have been widely researched. In the early work, Lee and Nedic \cite{6461383,lee2015asynchronous} introduced distributed random projection algorithms by designing random projections to ensure almost sure feasibility of the solution and using a distributed subgradient algorithm \cite{nedic1} to cooperatively minimize the global objective. In \cite{you2018distributed,8022966,falsone2020scenario,carlone2014distributed,chamanbaz2017randomized,Pantazis2022}, some scenario-based algorithms were proposed, which are based on extracting a large number of scenarios from the uncertainty set to obtain a scenario problem (SP), and then applying existing constrained distributed optimization methods to solve the SP. These algorithms provide probabilistic feasibility of the resulting solutions. The above two algorithms require the realizations of the uncertainty parameters to be randomly sampled from an underlying distribution. Thus, they unavoidably fail to find a feasible solution for all values of the uncertainty but can provide feasibility assurance in a probabilistic sense. However, in some practical applications, uncertain constraints are safety-critical, and hence, all agents need to locate guaranteed feasible and optimal consensus solutions to ensure that the multi-agent system has a good robustness performance. 
\par 
To address such issues, this paper considers a distributed robust convex program (DRCP) where the inequality constraints are influenced by bounded uncertainty. According to the treatment of uncertainty, the existing methods are categorized into two main types: the robust counterpart method \cite{yang2008distributed,yang2014distributed,wang2016distributed} and the cutting-plane consensus algorithm \cite{burger2013polyhedral,burger2012distributed}. The former presents a direct strategy by transforming the DRCP into an equivalent and computationally tractable problem based on duality theory, which is then solved by an existing distributed algorithm \cite{yang2008distributed}. This method asymptotically converges to a feasible optimal solution, whereas it is primarily used for uncertain linear and conic quadratic programs (see the preface of \cite{ben2009robust}). Moreover, this method requires the global information of constraints and uncertainty sets, and thus, the system considered by this method is not fully distributed. The later is a fully distributed algorithm that relies on iteratively extending linear cutting-planes into the finite set of constraints to approximate the DRCP, thereby enabling the algorithm to asymptotically converge to the optimal solution of the DRCP. However, this polyhedral approximation framework for the feasible set of the DRCP results in an outer approximation \cite{burger2013polyhedral}, which does not guarantee the feasibility of the solution. To the best of our knowledge, none of the existing fully distributed algorithms guarantee the feasibility of the solution in the DRCP. Additionally, these algorithms for solving the DRCP fail to achieve finite-time termination, which is essential for practice. Therefore, the motivation of this article is {\it to propose a novel and fully distributed approach for finding a feasible and approximate optimal consensus solution for the DRCP within a finite number of iterations.} 
\par
From the aforementioned motivation, a novel cutting-surface consensus approach is proposed to solve the DRCP. The method is based on the constraint-handling strategies of the centralized cutting-surface algorithm \cite{mehrotra2014cutting}, and the centralized right-hand restriction algorithm \cite{Mitsos} which is used to deal with standard semi-infinite programs. Firstly, an approximated DRCP (ADRCP) is constructed by discretizing the semi-infinite constraints into a finite number of inequality constraints to retrieve computational tractability and restricting the right-hand side of the constraints with a positive parameter to ensure that a feasible solution for (DRCP) can be attained. Secondly, the ADRCP is solved by the distributed projected gradient algorithm, which is based on epigraphic reformulation \cite{xie2018distributed} and the gradient projected algorithm \cite{nedic}, and the resulting optimal solution is used to approximate that of the DRCP. Thirdly, a lower level program (LLP) is introduced to determine whether the obtained solution of the ADRCP is feasible to the original problem (DRCP). If infeasible, the ADRCP is tightened by adding a convex and possibly nonlinear cutting-surface based on the maximum constraint violation into the existing finite constraint set; otherwise, a feasible point of the DRCP is located and used to update the optimal solution candidate. Finally, if the stopping criterion is satisfied, the optimal solution candidate is taken as the desired approximate optimal solution of the DRCP; otherwise, the restriction parameters is reduced and then the routine is performed again with updated information based on the explored solution of the ADRCP until the desired solution is located. Based on the above procedures, a cutting-surface consensus approach has been formulated to converge to an optimal solution of the DRCP. The main contributions of this paper are summarized as follows. 
\begin{enumerate}
\item The DRCP with nonidentical constraints and nonidentical uncertainty sets is considered over a time-varying unbalanced directed network under the uniformly jointly strongly connected (UJSC) assumption since such a problem is more favorable in practical applications. To solve this problem, this paper proposes a novel and fully distributed cutting-surface consensus approach. Unlike the above-mentioned methods in \cite{yang2008distributed,yang2014distributed,wang2016distributed,burger2012distributed,burger2013polyhedral}, the proposed approach terminates finitely and yields a locally feasible solution for each agent satisfying consensus and zeroth-order stopping conditions to certain tolerances. 
\item A distributed projected gradient (DPG) algorithm is developed for solving the ADRCP problem. In addition, we prove its convergence and convergence rate of $O({\ln t}/{\sqrt{t}})$. Compared to existing subgradient projection algorithms \cite{nedic,weijianli2023,mai2019,Plin2016,Sliu2017,huaqiangli2019}, the proposed DPG algorithm can be applied to the distributed constrained optimization problems with nonidentical constraints under time-varying unbalanced directed network. 
\item Based on the zeroth-order stopping conditions \cite{Himmelblau1972} and finite-time consensus algorithm \cite{xie2017stop}, a distributed termination algorithm is developed and applied to the process of solving the ADRCP and the DRCP. Unlike the finite-time consensus technique \cite{xie2017stop,yadav2007distributed,manitara2016distributed} that only guarantees approximate consensus of the resulting solutions under strongly connected graphs, our algorithm can be used in UJSC networks and introduces zeroth-order stopping conditions to prevent the convergent distributed optimization algorithms from terminating prematurely on a very steep slope or a flat plateau. This ensures that all agents obtain approximate optimal solutions in the discrete-time setting.
\item It is mathematically proven that the cutting-surface consensus approach reaches termination in finite iterations and guarantees local feasibility of the resulting solutions.	
\end{enumerate}
\par
The remaining parts of the paper are organized as follows. Section 2 presents the distributed robust convex optimization problem and network topology. Section 3 presents our main result of locating a feasible solution of the DRCP that reaches consensus and zeroth-order stopping conditions to certain tolerances within finite iterations, and it also presents a mathematical proof of this result. Section 4 analyzes the effect of the algorithm with a numerical example and compares it with existing methods. Section 5 provides conclusions and an outlook on future work.
\par
Notation:
$\mathbb{R}^m$ denotes the set of all $m$ dimensional real column vector; $\mathbb{Z}$ denotes the set of all nonnegative integers; $x^\top$ and $A^\top$ denote the transpose of the vector $x$ and the matrix $A$; 
$I$ represents an identity matrix with compatible size; ${\bf 1}_m$ denotes the $m$-dimensional column vector with all entries equal to 1; $e_i$ is a column vector where the $i$-th element equals to 1 and the remaining elements equal to 0; $\Vert x \Vert$ represents the Euclidean norm of the vector $x$; $\vert a\vert$ denotes the absolute value of a scalar $a\in\mathbb{R}$; ${\rm{dist}}(\overline x,X)$ denotes the Euclidean distance from a vector $\overline x$ to a set $X$, i.e. ${\rm{dist}}(\overline x,X)={\inf}_{x\in X} \Vert \overline x-x\Vert$; $P_X[\overline x]$ denotes the Euclidean projection of a vector $\overline x$ on a closed convex set $X$, i.e. $P_X[\overline x]={\rm{arg}}\min_{x\in X}\Vert \overline x-x\Vert$.

\section{Problem Statement}
\label{sec:Problem Statement}
We consider a multi-agent system of $m$ agents in which the objective of all agents is to cooperatively solve a distributed robust convex program (DRCP) with nonidentical semi-infinite constraints of the form:
\begin{flalign} 
	\tag{${\rm DRCP}$}
	\label{DRCP}
	\begin{aligned}
		&\mathop{\min}\limits_{{x}\in X} &     & F(x)=\sum^m_{i=1}f_i({x})\\
		&\ \rm{s.t.}                             &     & g_i({x},y_i)\le0\quad  \forall y_i\in Y_i \quad i=1,2,\ldots,m,
	\end{aligned}
\end{flalign}
where ${x}$ is a common decision vector, $y_i$ is an uncertainty parameter of agent $i$, the global constraint set $X \subseteq \mathbb{R}^n$ is a nonempty and compact convex set, $Y_i \subseteq \mathbb{R}$ is a nonempty and compact convex set for any $i\in\{1, \ldots, m\}$, $f_i({x}):X\rightarrow \mathbb{R}$ is the local cost function of agent $i$, $F({x}):X\rightarrow \mathbb{R}$ is the global objective function, $g_i({x},y_i):X\times Y_i\rightarrow \mathbb{R}$ is the local constraint function of agent $i$. Assume $f_i$ and $g_i$ are convex with respect to ${x}$. Additionally, $g_i$ is assumed to be continuous on $X\times Y_i$ for any $i\!\in\!\{1, \ldots, m\}$. Set $x^*$ as an optimal solution of DRCP, and $x^*$ belongs to the optimal solution set $\mathcal{F}^*$.
\par
 Each agent only knows its local cost function $f_i$, constraint $g_i$, and uncertainty set $Y_i$, and this information can be different for each agent. It is defined that the local feasible set of agent $i$ is $\mathcal{F}_i=\left\{x\in X|g_i(x,y_i)\leq 0,\forall y_i\in Y_i\right\}$. To ensure the feasibility of (DRCP), we make the following assumption.
\par
\begin{assumption}[Feasibility of ${\rm (DRCP)}$ and Slater Point] \label{Assumption 1}	
$\mathcal{F}$ is a nonempty set, and 
there is a point $\hat x\in \mathcal{F}$ satisfying
$$g_i(\hat x,y_i)<0,\ \ \forall y_i\in Y_i, \ \ \forall i\in\{1,\ldots,m\},$$	
where $\mathcal{F}$ is the intersection of local feasible sets of all agents, i.e., $\mathcal{F}=\bigcap^m_{i=1}\mathcal{F}_i$.
\end{assumption}
\vspace{2mm}
\begin{remark}
Since an optimization problem with uncertain local cost functions can be transformed into an epigraph reformulation problem with the same form as the (\ref{DRCP}), it is without loss of generality that this paper only considers the uncertainty appearing in the local constraints.
\end{remark}
\vspace{2mm}
\par
The communication among agents is represented by time-varying directed graphs $\mathcal{G}(t)=(\mathcal{V},\mathcal{E}(t))$, where $\mathcal{V}=\left\{1,\ldots,m\right\}$ denotes the vertex set, and $\mathcal{E}(t)= \left\{1,\ldots,m\right\}^2$ is the edge set at time $t\in\mathbb{Z}$. A directed edge $(i,j)\in \mathcal{E}(t)$ indicates that agent $i$ can directly transmit information to agent $j$ at time $t$. $(i,i)$ represents the self-loop of agent $i$. $\mathcal{G}(t)$ is strongly connected if every agent is reachable by all other agents. For any pair of agents $(i,j) $ with $i,j\in \mathcal{V}$, the minimum number of edges between two agents is called the distance from $i$ to $j$, denoted as ${\rm dist}(i,j)$. The diameter of the graph is denoted as ${\rm diam}(\mathcal{G}(t))=\mathop{\max}\limits_{\forall i,j \in \mathcal{V}}\ {\rm dist}(i,j)$. At the time $t$, for agent $i$, its out-neighbors set is $N_i^{out}(t)=\left\{j|(i,j)\in \mathcal{E}(t)\right\}$, and its in-neighbors set is $N_i^{in}(t)=\left\{j|(j,i)\in \mathcal{E}(t)\right\}$. Define the weight matrix $A(t)=[a_{ij}(t)]\in \mathbb{R}^{m\times m}$, which satisfies that $a_{ij}(t)>0$ if $j\in N_i^{in}(t)$ and $a_{ij}(t)=0$, otherwise. In order to ensure that the information from any agent can be directly or indirectly transmitted to all other agents in finite time, the following assumption is made.
\par
\begin{assumption}[Uniformly Jointly Strongly Connectivity \cite{nedic}]\label{Assumption 2}
	The time-varying graphs $\left\{\mathcal{G}(t)\right\}$ are uniformly jointly strongly connected (UJSC), i.e. for all $t\ge0$, there is an integer $S>0$ making $\mathcal{G}(t:t+S)$ strongly connected, where
	$$\mathcal{G}(t:t+S)=(\mathcal{V},\mathcal{E}(t)\cup\ldots\cup \mathcal{E}(t+S-1)).$$
\end{assumption}
\par
The objective of our work is to propose an approach to solve the (\ref{DRCP}) within a finite number of iterations under time-varying unbalanced directed graphs while guaranteeing the local feasibility of the approximate optimal solution.
\section{Main results}
\label{sec:Approximation Problem and Finite Time Solving Method}
Since the constraints of the (\ref{DRCP}) are semi-infinite constraints, which means that an infinite number of inequality constraints are imposed on finite-dimensional decision variables, solving this problem is NP-hard. To solve the (\ref{DRCP}), this section firstly introduces an approximation problem of the (\ref{DRCP}) and proposes a distributed projected gradient (DPG) algorithm for solving this approximation problem under the setting of time-varying unbalanced directed networks. Then, we develop a distributed finite-time termination algorithm. By combining it with the DPG algorithm, all agents obtain approximate optimal and consensus solutions of the approximation problem within finite time. Next, we propose a distributed cutting-surface consensus algorithm that, by tuning the parameters of the approximation problem in each iteration, enables each agent to attain a locally feasible solution of the (\ref{DRCP}) satisfying consensus and zeroth-order stopping conditions to certain tolerances within a finite number of iterations. Finally, the main results on finite-time termination and local feasibility of the resulting solutions for the distributed cutting-surface consensus algorithm are established.
\subsection{Approximation Problem of DRCP and Solving Method}
\label{sec:Approximation Problem}
Motivated by the upper bounding problem in \cite{Mitsos}, an approximation problem of the (\ref{DRCP}) is introduced by discretizing the compact sets $Y_i$ into the finite sets $Y^k_i$ and restricting the right-hand sides of the constraints with positive parameters $\epsilon^k_i$ for all $i\in\mathcal{V}$. The mathematical description of the approximation problem is as follows.
\begin{flalign} 
	\tag{$\rm{ADRCP}^{\it k}$}
	\label{ADRCP}
	\hspace{-2mm}
	\begin{aligned}
		&\mathop{\min}\limits_{{x}\in X} &     & F(x)\!=\!\sum^m_{i=1}f_i({x})\\
		&\ \rm{s.t.}                             &     & g_i({x},y_i)\!\le\! -\epsilon^{k}_i\ \  \forall y_i\!\in\! Y^{k}_i \ \ i=1,\ldots,m,
	\end{aligned}
\end{flalign}
where for all $i\in\mathcal{V}$, $\epsilon^{k}_i$ is a restriction parameter of agent $i$, $\epsilon^{k}_i>0$, and $Y^{k}_i$ is a finite subset of $Y_i$, $Y^{k}_i \subset Y_i$. We set $x^{k,*}$ as an optimal solution of the (\ref{ADRCP}), and $x^{k,*}$ belongs to the optimal solution set $\mathcal{F}^{k,*}$.
\vspace{2mm}
\begin{remark}
In the subsequent distributed cutting-surface consensus algorithm, the (\ref{ADRCP}) denotes the distributed optimization problem constructed by the finite sets $Y^k_i$ and the restriction parameters $\epsilon_i^k$ of all agents at the iteration $k$. Here, $\epsilon^k_i$ represents the restriction parameter of the agent $i$ at the $k$-th iteration of the distributed cutting-surface consensus algorithm, and $Y_i^k\subset Y_i$ is a finite set containing values of the uncertainty parameter explored by agent $i$ within the previous $k$ iterations of this algorithm.
\end{remark}
\vspace{2mm}
\par
The problem (\ref{ADRCP}) is a standard distributed optimization problem with a finite number of local inequality constraints and a global constraint set over time-varying unbalanced directed graphs. In order to solve the (\ref{ADRCP}), we propose a distributed projected gradient  (DPG) algorithm under the UJSC assumption and prove the convergence and convergence rate of this algorithm. To show these results, we first introduce the following assumptions.

\begin{assumption}[Feasibility of ${\rm (ADRCP^k)}$ and Slater Point]\label{Assumption ip}
	Let $\mathcal{F}^k_i=\{x\in X| g_i(x,y_i)$ $ \le -\epsilon^k_i, \forall y_i\in Y^k_i\}$ be the local feasible set of agent $i\in\mathcal{V}$ in the (\ref{ADRCP}). Suppose that the intersection set $\mathcal{F}^k=\bigcap_{i=1}^m\mathcal{F}^k_i$ is nonempty, and there exists a point $\tilde x\in\mathcal{F}^k$ satisfying
	$$	g_i(\tilde x,y_i)<-\epsilon^k_i,\quad \forall y_i\in Y_i^k, \quad \forall i\in\{1,\ldots,m\}.$$	
\end{assumption}
\par
Under Assumption \ref{Assumption ip}, the (\ref{ADRCP}) has a nonempty optimal solution set, i.e., $\mathcal{F}^{k,*}\neq\emptyset$. 
\begin{assumption} \label{Assumption wr} The assumptions of row stochastic and weights rule are as follows:
	\begin{itemize}
		\item[(i)] The weight matrix $A(t)=[a_{ij}(t)]$ associated with $\mathcal{G}(t)$ is row stochastic, i.e. $\sum^m_{j=1} a_{ij}(t)=1$ for any $i\in \mathcal{V}$ and $t\in\mathbb{Z}$.
		\item[(ii)] There are self-loops in all $\mathcal{G}(t)$. Additionally, there exists a constant $0<\gamma<1$ such that $a_{ij}(t)\ge\gamma$ if $a_{ij}(t)>0$, and $a_{ii}(t)\ge\gamma$ for $\forall i,j\in\mathcal{V}$ and $\forall t\in\mathbb{Z}$.
	\end{itemize}
\end{assumption}
\par
Assumption \ref{Assumption wr}(i) establishes that each agent employs a convex combination of all agents' information in the distributed optimization algorithms. Assumption \ref{Assumption wr}(ii) states that all agents are assigned a uniform, non-zero lower bound on all weights corresponding to interactions with agents providing information, thus ensuring that the information from each agent influences the information of every other agent persistently in time \cite{nedic}.
\begin{assumption}[Stepsize Rule]\label{Assumption sr}
	The stepsize $\alpha(t)>0$ satisfies the following conditions: 
	\begin{equation}\nonumber
		\begin{aligned}
		&\sum^{\infty}_{t=0} \alpha(t)=\infty,\quad \sum^{\infty}_{t=0} \alpha^2(t)<\infty,\\
		&\alpha(t)\le\alpha(l)\qquad \forall t>l\ge0.
		\end{aligned}
	\end{equation}
\end{assumption}
\par
Assumption \ref{Assumption sr} ensures the convergence of the distributed optimization algorithms to the optimal point.
\par
Then, we start to design an optimization algorithm for solving the (\ref{ADRCP}). In order to coordinate with the subsequent distributed cutting-surface consensus algorithm, we consider two main aspects in the algorithm design. One is that the algorithm can be applied to time-varying unbalanced directed networks under the UJSC assumption, same as the cutting-surface consensus algorithm, which will not affect the applicability and practical value of the cutting-surface consensus algorithm. The other is that the algorithm is capable of solving the (\ref{ADRCP}) with nonidentical local constraints, and the resulting solution of each agent is feasible to its constraints. To the best of our knowledge, existing distributed optimization algorithms are unable to take both of these requirements into account (see Remark \ref{remark_dpg} and Table \ref{Table3} for details).
	\vspace{2mm}
	\setlength{\tabcolsep}{1mm}{
		\begin{table}[!t]
			\caption{Comparisons between the proposed DPG algorithm and the existing (sub)gradient projected algorithms}
			\label{Table3}
			\centering 
			\begin{tabular}{cccccc} 
				\toprule 
				\multirow{2}{*}{Literature} & Time & \multirow{2}{*}{Unbalanced} &\multirow{2}{*}{UJSC} &
				Nonidentical  & Convergence   \\ 
				&varying&&&constraints&property\\
				\midrule 
				\multirow{2}{*}{\!\!\cite{nedic}} & \checkmark & $\times$&\checkmark & $\times$&sum \\
				& $\times$ & $\times$ & $\times$&\checkmark&sum \\
				\cite{weijianli2023} & \checkmark & \checkmark & \checkmark&$\times$&weighted sum \\
				\cite{mai2019} & \checkmark & $\times$ & $\times$&\checkmark&sum \\
				\cite{Plin2016} & \checkmark & $\times$ & $\times$&\checkmark&sum \\  
				\cite{Sliu2017} & \checkmark & $\times$ & \checkmark&$\times$&sum \\
				\cite{huaqiangli2019} & \checkmark & \checkmark & \checkmark&$\times$&sum \\
				our work & \checkmark &\checkmark & \checkmark& \checkmark &sum \\
				\bottomrule 
			\end{tabular} 
		\end{table}
	} 
\begin{remark}[Dealing with Unbalance]\label{remark_dpg}
	The techniques for handling unbalance have been extensively studied. In the seminal work, \cite{kitsianos2012} proposed a reweighting technique to address fixed unbalanced networks, but the Perron vectors of the graph and global information were required. \cite{mai2019} developed an additional adaptive algorithm that estimates the Perron vectors, and then employs a reweighting technique without the need for global information. Nonetheless, this algorithm cannot be applied to time-varying graphs. \cite{nedic2015,congwang2023} proposed a push-sum consensus protocol to handle the unbalance of time-varying graphs, but this algorithm is complicated as it involves multiple nonlinear iterations. Recently, \cite{xie2018distributed} used an epigraph form of a constrained optimization problem to deal with unbalance, which is applicable to UJSC networks. Inspired by \cite{xie2018distributed}, we construct an epigraphic reformulation of the (\ref{ADRCP}) in Step (i) of our algorithm. Different from the D-RFP algorithm proposed in \cite{xie2018distributed}, we adopt the Euclidean projections to ensure that the agent estimates lie within their local feasible sets, ensuring the local feasibility of the solution instead of almost sure feasibility in \cite{xie2018distributed}.
\end{remark}
\vspace{2mm}
\par 
Therefore, we propose a new distributed projected gradient (DPG) algorithm based on the following two steps:
\par
{\bf Step (i):}  Proceed the following epigraphic reformulation for the (\ref{ADRCP}) to handle the unbalance of the distributed network:
\begin{equation} 	
	\hspace{-5mm}
	\label{epigraphic reformulation}
	\begin{aligned}
		&\mathop{\min}\limits_{(x,u)\in {\Theta}} & & \sum^m_{i=1}\frac{\mathbf{1}^\top_m  u}{m}\\
		&\quad \rm{s.t.}     & & f_i(x)-e^\top_i u\le0,\  i=1,2,...,m,     \\                        
		& & & g_i({x},y_i)\le -\epsilon^{k}_i,\ \forall y_i\in Y^{k}_i, \ i=1,2,...,m,		
	\end{aligned}
\end{equation}
where $\Theta\subseteq\mathbb{R}^{n+m}$ is the Cartesian product of $X$ and $\mathbb{R}^m$. Local objective function of agent $i$ is replaced with a linear function $f_0(u)=\frac{\mathbf{1}^\top_m  u}{m}$, while the local feasible set is  $\Omega_i=\left\{(x,u)\in \Theta|f_i(x)-e^{\top}_i u\le 0 \wedge  g_i(x,y_i)\le -\epsilon^k_i, \forall y_i\in Y^k_i\right\}$.
\par
To simplify the problem form of (\ref{epigraphic reformulation}), we consider the following equivalent form:
\begin{equation} 	
	\label{rp_adrcp}
	\begin{aligned}
		&\mathop{\min}\limits_{\theta\in {\Theta}} & & f_0(\theta)=c^\top 
		\theta\\
		&\ \ \rm{s.t.}     & & f_i(\theta)\le0,\quad \quad i=1,2,...,m      \\                        
		& & & g_i(\theta)\le 0,\quad \quad i=1,2,...,m		
	\end{aligned}
\end{equation}
where $\theta=(x,u)\in\Theta$, the local feasible set of agent $i$ is $\Omega_i=\left\{\theta\in\Theta|f_i(\theta)\le 0 \wedge g_i(\theta)\le 0\right\}$, and the global feasible set is $\Omega=\bigcap_{i=1}^m\Omega_i$.
\par
{\bf Step (ii):} Referring to the subgradient projection algorithm proposed in \cite{nedic}, each agent $i$ iteratively updates its local estimate $\theta_i(t+1)$ of the optimal solution to problem (\ref{rp_adrcp}) by
\begin{equation}
	\label{alg _dpg}
	\theta_i(t+1)=P_{\Omega_i}{\bigg[\sum_{j\in N^{in}_i(t)} a_{ij}(t)\theta_j(t)}-\alpha(t)c\bigg].
\end{equation}
\begin{remark}
	In the DPG algorithm, Euclidean projection is required for each iteration. The computational complexity of the projection is related to the projected set. If the projected set has a relatively simple structure, such as a half-space or an interval, the projected points can be obtained directly. Otherwise, a sub-optimization problem as shown below needs to be solved for each agent $i\in\mathcal{V}$ to find the projected point.
	\begin{equation} 	\nonumber
		\hspace{-11mm}
		\begin{aligned}
			&\mathop{\min}\limits_{\theta_i(t+1)\in {\Theta}} & & \Vert \theta_i(t+1)-\kappa_i(t)\Vert \\
			&\quad\  \rm{s.t.}     & & f_i(\theta_i(t+1))\le0   \\                        
			& & & g_i(\theta_i(t+1))\le 0,	
		\end{aligned}
	\end{equation}
	where $\kappa_i(t)=\sum_{j\in N^{in}_i(t)} a_{ij}(t)\theta_j(t)-\alpha(t)c$.
\end{remark}
\vspace{2mm}
\par
Finally, we prove the convergence and convergence rate of the DPG algorithm (see Appendix). Here are the results:
\begin{theorem}[Convergence of DPG Algorithm]\label{subproblem_convergence_theorem}
	Let $\theta^*$ be an optimal solution of the (\ref{ADRCP}), and $\theta^*$ belongs to the optimal solution set $\Omega^{*}$. Under Assumptions \ref{Assumption 2}--\ref{Assumption sr}, the distributed projected gradient algorithm reaches convergence to an optimal consensus solution, i.e., $\exists \theta^*\in\Omega^{*}:\ \lim_{t\rightarrow\infty}\theta_i(t)=\theta^*,\ \forall i\in\mathcal{V}$.
\end{theorem}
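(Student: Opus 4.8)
The plan is to run the standard Lyapunov-type analysis for distributed projected (sub)gradient methods, adapted to the fact that after the epigraphic reformulation (\ref{rp_adrcp}) the objective $c^\top\theta$ is \emph{identical} across agents, so every agent uses the same gradient $c$ in (\ref{alg _dpg}); this common objective is precisely what makes a merely row-stochastic weight matrix (Assumption \ref{Assumption wr}) adequate. The two structural facts I would lean on everywhere are the nonexpansiveness of the Euclidean projection and the observation that, since $\theta^*\in\Omega=\cap_i\Omega_i$, we have $\theta^*\in\Omega_i$ for every $i$, so $\|P_{\Omega_i}[z]-\theta^*\|\le\|z-\theta^*\|$.

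First I would quantify the mixing of the communication matrices. Introducing the transition matrices $\Phi(t,s)=A(t)A(t-1)\cdots A(s)$, Assumption \ref{Assumption 2} together with the self-loops and the uniform lower bound $\gamma$ from Assumption \ref{Assumption wr} yields geometric consensus of the rows, $|[\Phi(t,s)]_{ij}-\phi_j(s)|\le C\beta^{\,t-s}$ for some $C>0$ and $\beta\in(0,1)$, which is the quantitative engine for the rest. Next, writing $v_i(t)=\sum_j a_{ij}(t)\theta_j(t)$ and combining nonexpansiveness (with $\theta^*\in\Omega_i$), convexity of the squared norm, and row stochasticity, I would derive the basic iterate inequality
$$\|\theta_i(t+1)-\theta^*\|^2\le\sum_{j=1}^m a_{ij}(t)\,\|\theta_j(t)-\theta^*\|^2-2\alpha(t)\,c^\top\!\big(v_i(t)-\theta^*\big)+\alpha^2(t)\,\|c\|^2 .$$
Square-summability of $\alpha(t)$ (Assumption \ref{Assumption sr}), compactness of $X$ in the $x$-block, and the constraint $u_i\ge f_i(x)$ enforced inside $\Omega_i$ give boundedness of all trajectories.

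For consensus I would write the update as $\theta_i(t+1)=v_i(t)-\alpha(t)c+r_i(t)$ with projection residual $r_i(t)$, and bound it by the disagreement via $\|r_i(t)\|=\mathrm{dist}\big(v_i(t)-\alpha(t)c,\,\Omega_i\big)\le\|v_i(t)-\alpha(t)c-\theta_i(t)\|\le\max_j\|\theta_j(t)-\theta_i(t)\|+\alpha(t)\|c\|$, using $\theta_i(t)\in\Omega_i$. Unrolling the recursion through $\Phi(t,s)$ from the first step and using $\alpha(t)\to0$ then forces the disagreement $\max_{i,j}\|\theta_i(t)-\theta_j(t)\|\to0$, so all agents share a common limiting trajectory $\bar\theta(t)$. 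Because each $\theta_i(t)\in\Omega_i$ and the disagreement vanishes, every limit point lies in $\cap_i\Omega_i=\Omega$, which delivers feasibility.

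For optimality I would aggregate the basic inequality against the limiting Perron weights, replace $v_i(t)$ by $\bar\theta(t)$ at the cost of a vanishing (indeed summable) disagreement error, and control the cross term via $c^\top\bar\theta(t)-c^\top\theta^*\ge-\|c\|\,\mathrm{dist}(\bar\theta(t),\Omega)$ together with optimality of $\theta^*$. A standard deterministic convergence lemma (if $a_{t+1}\le a_t-b_t+c_t$ with $b_t\ge0$ and $\sum_t c_t<\infty$, then $a_t$ converges and $\sum_t b_t<\infty$) then yields $c^\top\bar\theta(t)\to c^\top\theta^*$ and, via $\sum_t\alpha(t)=\infty$, pins down a single optimal limit $\theta^*\in\Omega^*$ shared by all agents. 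The hard part will be the coupled bootstrapping in the third step: because each agent projects onto a \emph{different} local set $\Omega_i$, the projection residuals are themselves governed by the quantity (disagreement) being bounded, so consensus, feasibility, and the residual estimates must be closed simultaneously; the non-doubly-stochastic weighting compounds this, since the effective center of mass is Perron-weighted and one must verify that the shared gradient $c$ keeps the optimality estimate free of spurious weight factors.
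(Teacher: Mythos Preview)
Your high-level architecture---common objective after epigraph reformulation, Perron-weighted Lyapunov aggregation, Robbins--Siegmund lemma---matches the paper's. But the closure you flag as ``the hard part'' is not actually achievable with the tools you have listed, and the paper uses two ingredients you omit.

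First, your consensus step bounds the projection residual by the disagreement, $\|r_i(t)\|\le d(t)+\alpha(t)\|c\|$ with $d(t)=\max_{i,j}\|\theta_i(t)-\theta_j(t)\|$, and then unrolls through $\Phi$. This yields a self-referential recursion of the form $d(t)\le C_1\lambda^t+C_2\sum_{s<t}\lambda^{t-s-1}d(s)+(\text{terms in }\alpha)$, where $C_2$ is proportional to $mB$ from the mixing bound $|[\Phi(t,s)]_{ij}-\pi_j(s)|\le B\lambda^{t-s}$. Nothing forces $C_2/(1-\lambda)<1$, so the loop does not close and $d(t)\to0$ does not follow. The paper avoids this circularity entirely: it uses \emph{firm} nonexpansiveness of the projection (not mere nonexpansiveness) to obtain the stronger one-step estimate $\|\theta_i(t{+}1)-v\|^2\le\|\tilde\theta_i(t)-\alpha(t)c-v\|^2-\|\phi_i(t)\|^2$, so that the projection error $\|\phi_i(t)\|^2$ appears with a \emph{negative} sign in the Lyapunov inequality. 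Summability $\sum_t\sum_i\|\phi_i(t)\|^2<\infty$ then falls out of the Robbins--Siegmund lemma as a byproduct, and consensus follows from that, with no bootstrapping.

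Second, your optimality step needs $\alpha(t)\,\mathrm{dist}(\bar\theta(t),\Omega)$ to be summable, but knowing only that each $\theta_i(t)\in\Omega_i$ and that the disagreement is small does not bound the distance to the \emph{intersection} $\Omega=\cap_i\Omega_i$. The paper invokes a linear-regularity (Hoffman-type) bound, $\mathrm{dist}(x,\cap_i\Omega_i)\le R\max_i\mathrm{dist}(x,\Omega_i)$, which together with $\theta_i(t)\in\Omega_i$ gives $\mathrm{dist}(\bar\theta(t),\Omega)\le (R/\eta)\sum_i\pi_i(t)\|\theta_i(t)-\bar\theta(t)\|$. To absorb the resulting cross term, the paper augments the Lyapunov function to $\sum_i\pi_i(t)\|\theta_i(t)-\theta^*\|^2+ab\,\psi(t)$, with $\psi(t)=\alpha(t)\sum_{r<t}\lambda^{t-r-1}\beta(r)$ and $\beta(r)=\sum_i\|\phi_i(r)\|$; the constants $a,b$ are tuned so that the $\psi$-recursion and the $-\|\phi_i\|^2$ term together cancel the feasibility-gap contribution. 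Without the regularity constant and the augmented Lyapunov function, your cross-term control does not produce a summable error, so the convergence lemma cannot be applied. A minor additional point: boundedness of the $u$-block does not follow from $u_i\ge f_i(x)$ alone (that is only a lower bound); in the paper it is a consequence of the Lyapunov bound itself rather than an a priori input.
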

\begin{theorem}[Convergence Rate of DPG Algorithm]\label{subproblem_convergence_rate_theorem}
	Let Assumptions \ref{Assumption 2}--\ref{Assumption wr} be satisfied. If $\left\{\alpha(t)\right\}$ is nonincreasing and $\alpha(t)=O({1}/{\sqrt{t}})$, then for the distributed projected gradient algorithm, we have $\vert f_0(\hat\theta_i(t))-f_0^*\vert=O({\ln t}/{\sqrt{t}})$, where $\hat\theta_i(t)={\sum_{r=0}^{t}\alpha(r)\theta_i(r)}/{\sum_{\tau=0}^{t}\alpha(\tau)}$.
\end{theorem}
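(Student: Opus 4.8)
The plan is to reduce everything to a cumulative suboptimality bound by exploiting the linearity of the common objective $f_0(\theta)=c^\top\theta$. Writing $S(t)=\sum_{r=0}^{t}\alpha(r)$, linearity gives the exact identity
\[
f_0(\hat\theta_i(t))-f_0^*=\frac{1}{S(t)}\sum_{r=0}^{t}\alpha(r)\big(f_0(\theta_i(r))-f_0^*\big),
\]
so no Jensen step is needed and it suffices to bound $\big|\sum_{r=0}^{t}\alpha(r)(f_0(\theta_i(r))-f_0^*)\big|$ by $O(\ln t)$ and divide by $S(t)$, which for $\alpha(t)=O(1/\sqrt t)$ nonincreasing satisfies $S(t)=\Theta(\sqrt t)$ and $\sum_{r=0}^{t}\alpha^2(r)=O(\ln t)$. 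First I would set up a one-step estimate. With $w_i(t)=\sum_j a_{ij}(t)\theta_j(t)$, and using $\theta^*\in\Omega\subseteq\Omega_i$ together with the firm nonexpansiveness of the Euclidean projection onto the convex set $\Omega_i$, I obtain for every agent
\[
\|\theta_i(t+1)-\theta^*\|^2\le\|w_i(t)-\theta^*\|^2-2\alpha(t)c^\top(w_i(t)-\theta^*)+\alpha^2(t)\|c\|^2-\|\varphi_i(t)\|^2,
\]
where $\varphi_i(t)=\theta_i(t+1)-(w_i(t)-\alpha(t)c)$ is the projection residual, which enters with a favorable negative sign.

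Because $A(t)$ is only row stochastic (Assumption \ref{Assumption wr}), a uniform average does not telescope, so the key device is an absolute probability sequence $\{\pi(t)\}$ for $\{A(t)\}$, i.e. stochastic vectors obeying $\pi(t)^\top=\pi(t+1)^\top A(t)$, whose entries are uniformly bounded below under Assumptions \ref{Assumption 2} and \ref{Assumption wr}. Defining the weighted Lyapunov function $W(t)=\sum_i\pi_i(t)\|\theta_i(t)-\theta^*\|^2$, applying convexity to $\|w_i(t)-\theta^*\|^2\le\sum_j a_{ij}(t)\|\theta_j(t)-\theta^*\|^2$, and summing the one-step estimate against the weights $\pi_i(t+1)$, the transition identity collapses the averaging term exactly to $W(t)$ and turns the linear term into $\sum_j\pi_j(t)(f_0(\theta_j(t))-f_0^*)$. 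Telescoping over $r=0,\dots,t$ and discarding the nonnegative $\|\varphi_i\|^2$ and $W(t+1)$ contributions yields $\sum_{r=0}^{t}\alpha(r)\sum_j\pi_j(r)(f_0(\theta_j(r))-f_0^*)\le\tfrac12 W(0)+\tfrac12\|c\|^2\sum_{r=0}^{t}\alpha^2(r)=O(\ln t)$.

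It remains to pass from this $\pi$-weighted objective to that of a fixed agent. By linearity, $\big|f_0(\theta_i(r))-\sum_j\pi_j(r)f_0(\theta_j(r))\big|\le\|c\|\,\delta(r)$ with $\delta(r):=\max_{i,j}\|\theta_i(r)-\theta_j(r)\|$, so what is needed is the consensus rate $\delta(t)=O(\alpha(t))$, which then gives $\sum_{r=0}^{t}\alpha(r)\delta(r)=O(\sum_r\alpha^2(r))=O(\ln t)$ and hence the matching cumulative bound for each agent. The hard part will be this consensus rate. Casting the iteration as a perturbed linear system $\theta(t+1)=A(t)\theta(t)+p(t)$ with $p_i(t)=-\alpha(t)c+\varphi_i(t)$, the geometric mixing of the row-stochastic products $\Phi(t,s)=A(t)\cdots A(s)$ under UJSC (Assumptions \ref{Assumption 2}, \ref{Assumption wr}) gives $\delta(t)\le C\beta^{t}+C\sum_{s<t}\beta^{t-s}\|p(s)\|$, while the residual obeys $\|\varphi_i(t)\|\le\alpha(t)\|c\|+\mathrm{dist}(w_i(t),\Omega_i)\le\alpha(t)\|c\|+\|w_i(t)-\theta_i(t)\|=O(\alpha(t)+\delta(t))$. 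Resolving the resulting self-referential inequality so that the disagreement is genuinely $O(\alpha(t))$ — not merely vanishing — is the main obstacle; I would close it by a small-gain/inductive argument exploiting that $\alpha(t)$ is nonincreasing, so that the geometric kernel convolved against the slowly varying $\alpha$ reproduces a term of order $\alpha(t)$.

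Finally I would assemble the two-sided estimate. Dividing the $O(\ln t)$ cumulative bound by $S(t)=\Theta(\sqrt t)$ gives the upper bound $f_0(\hat\theta_i(t))-f_0^*\le O(\ln t/\sqrt t)$. For the lower bound I would use approximate feasibility of the running average: $\hat\theta_i(t)\in\Omega_i$ holds exactly, being a convex combination of points of the convex set $\Omega_i$, while for $j\ne i$ consensus gives $\mathrm{dist}(\hat\theta_i(t),\Omega_j)=O\!\big(S(t)^{-1}\sum_r\alpha(r)\delta(r)\big)=O(\ln t/\sqrt t)$. Invoking the Slater point $\tilde x$ of Assumption \ref{Assumption ip} together with convexity, an $\eta$-infeasible point can undercut $f_0^*$ by at most $O(\eta)$, so $f_0^*-f_0(\hat\theta_i(t))\le O(\ln t/\sqrt t)$. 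Combining the two inequalities yields $\big|f_0(\hat\theta_i(t))-f_0^*\big|=O(\ln t/\sqrt t)$, as claimed.
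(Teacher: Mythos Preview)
Your overall architecture---weighted Lyapunov with the absolute probability sequence $\pi(t)$, telescoping, then consensus control---matches the paper, but the step you yourself flag as ``the main obstacle'' is a genuine gap: the small-gain/inductive argument for $\delta(t)=O(\alpha(t))$ does not close. The problem is that you discard the $\|\varphi_i(t)\|^2$ terms in the telescoping, yet it is precisely the residuals $\varphi_i$ that drive the disagreement. Your own estimate $\|\varphi_i(t)\|\le\|c\|\alpha(t)+\delta(t)$ feeds back into the mixing convolution with a coefficient of order $B/(1-\lambda)$ (with $B,\lambda$ the constants of Lemma~\ref{weighted rule lemma}(a)); under Assumptions~\ref{Assumption 2} and~\ref{Assumption wr} this gain is generically much larger than $1$, so no contraction-type induction will collapse the self-referential recursion to $\delta(t)=O(\alpha(t))$.

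The paper circumvents the circularity by \emph{retaining} the term $-\sum_i\pi_i(t{+}1)\|\phi_i(t)\|^2$ in the one-step decrease (Proposition~\ref{subproblem_important_result_3}). Telescoping then bounds not only $\sum_r\alpha(r)c^\top(w(r)-\theta^*)$ but simultaneously $\sum_r\Phi(r)$ with $\Phi(r)=\sum_i\|\phi_i(r)\|^2$, both by $O\big(\sum_r\alpha^2(r)\big)$. The consensus deviation is handled via Proposition~\ref{subproblem_important_result_2}(b), whose residual contribution is $\beta(s)=\sum_i\|\phi_i(s)\|$; multiplying by $\alpha(r)$ and summing produces $\sum_r\psi(r)\le\frac{1}{1-\lambda}\sum_r\big(\tfrac14\alpha^2(r)+m\Phi(r)\big)$, and the $\Phi$-part is \emph{absorbed} back into the already-available $-\tfrac{D_4}{2}\sum_r\Phi(r)$ budget (the constant $C_0$ is chosen precisely so the coefficients match). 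One arrives at $C_0\sum_r\alpha(r)\|\theta_i(r)-w(r)\|+\sum_r\alpha(r)c^\top(w(r)-\theta^*)\le C_1+C_2\sum_r\alpha^2(r)$ without ever needing the pointwise estimate $\delta(t)=O(\alpha(t))$. As a side remark, your Slater-based lower bound is unnecessary: taking $w(r)=P_\Omega[\bar\theta(r)]\in\Omega$ gives $\hat w(t)\in\Omega$, hence $c^\top(\hat w(t)-\theta^*)\ge 0$, and the two-sided bound follows at once from $|c^\top(\hat\theta_i(t)-\theta^*)|\le C\|\hat\theta_i(t)-\hat w(t)\|+c^\top(\hat w(t)-\theta^*)$.
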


\subsection{Distributed Finite-Time Termination Algorithm}
\label{sec:Finite Time Termination}
In Section \ref{sec:Approximation Problem}, we developed a DPG algorithm to solve the (\ref{ADRCP}) and proved that the sequence $\{\hat \theta_i(t)\}_t$ of each agent converges at a sublinear rate towards $\theta^*$. However, in practical applications, we cannot make the algorithm iterate infinitely to obtain the optimal solution $\theta^*$. Therefore, it is essential to design an effective termination algorithm such that the DPG algorithm terminates after a finite number of time slots while all agents obtain approximate optimal solutions of the (\ref{ADRCP}). Currently, the most commonly used termination method in distributed optimization is to set maximum number of iterations, which is always a conservative consideration as it is affected by the network structures, the initial values of the decision variables, and the convergence rate of the recursive algorithm \cite{xie2017stop}. In \cite{yadav2007distributed,manitara2016distributed,xie2017stop}, finite-time consensus algorithms were proposed, equipped with the DPG algorithm, to obtain the approximate consensus solutions in finite time under strongly connected directed graphs. Nevertheless, this finite-time consensus algorithm is not sufficient for all agents to obtain approximate optimal solution. Specifically, when $\Vert \theta_i(t)-\theta_j(t)\Vert\le\epsilon$ holds for any $i,j\in\mathcal{V}$, there may exist the case that for any agent $i\in\mathcal{V}$, $\Vert \theta_i(t)-\theta^*\Vert\gg\epsilon$.
\par 
Therefore, based on finite-time consensus algorithm \cite{xie2017stop} and the zeroth-order stopping conditions \cite{Himmelblau1972}, we develop a novel finite-time termination algorithm for distributed consensus algorithms in the discrete-time setting, which ensures that all agents obtain approximate optimal consensus solutions. Moreover, our algorithm is applicable to time-varying unbalanced directed graphs under the UJSC assumption.
\par
Our aim is to make the approximation solution of each agent satisfy the consensus and zeroth-order stopping conditions to certain tolerances. The following is the stopping criterion: for any two agents $i\neq j$ and $i,j\in\mathcal{V}$
\begin{subequations}\label{stopping_criterion1}
			\begin{align}\label{stopping_criterion1_a}
			&\Vert \theta_i(t)-\theta_j(t)\Vert\le \epsilon_1 \\\label{stopping_criterion1_b}
			&\Vert \theta_i(t)-\theta_i(t-1)\Vert\le \epsilon_2 \\\label{stopping_criterion1_c}
			&\vert f_i(\theta_i(t))-f_i(\theta_i(t-1))\vert\le \epsilon_3,
		\end{align}
\end{subequations}
where $t\ge 1$. (\ref{stopping_criterion1_a}) refers to the consensus stopping condition, while (\ref{stopping_criterion1_b}) and (\ref{stopping_criterion1_c}) denote the zeroth-order stopping conditions based on the agent estimates $\theta_i$ and function values $f_i(\theta_i)$ respectively. Note that combining conditions (\ref{stopping_criterion1_b}) and (\ref{stopping_criterion1_c}) prevents the DPG algorithm from terminating prematurely on a very steep slope or a flat plateau.
\begin{definition}[Local $\epsilon_1$-consensus \cite{xie2017stop}]
	Given $\epsilon_1\!>\!0$, if the local estimate $\theta_i$ of agent $i\in\mathcal{V}$ satisfies $\mathop{\max}_{j\in N^{in}_i(t)\cup\left\{i\right\}}\Vert \theta_i(t)-\theta_j(t)\Vert\le\epsilon_1$, the agent $i$ is  deemed to reach local $\epsilon_1$-consensus.
\end{definition}
\par
Based on the stopping criterion (\ref{stopping_criterion1}), we propose a distributed finite-time termination algorithm. Firstly, each agent sends four-bit data to its out-neighbors between the time $t$ and $t+1$. The data is $[h^1_i(t), e^{1}_i(t), e^{2}_i(t), e^{3}_i(t)]$. The first bit $h^1_i(t)$ is used to judge whether the stopping criterion (\ref{stopping_criterion1}) is satisfied which will be shown later. The second bit $e^1_i(t)$ is used to count the latest number of times consecutively attaining local $\epsilon_1$-consensus at time $t$. The remaining two bits $e^2_i(t)$ and $e^3_i(t)$ record the latest number of times consecutively satisfying (\ref{stopping_criterion1_b}) and (\ref{stopping_criterion1_c}) for agent $i$ and its in-neighbors at time $t$.
\par
Then, each agent calculates (\ref{stoppingc1_1})--(\ref{stoppingc1_4}) at time $t+1$ according to its and its in-neighbors' information.
\begin{flalign}\label{stoppingc1_1}
	&h^1_i(t+1)\!=\!\mathop{\min}\limits_{j\in N^{in}_i(t)\cup\left\{i\right\}}\!\left\{h^1_j(t),e^{1}_j(t),e^{2}_j(t),e^{3}_j(t)\right\}\!+\!1,&
\end{flalign}
\begin{flalign}\label{stoppingc1_2}
&	e^{1}_i(t+1)=
	\begin{cases}
		e^{1}_i(t)\!+\!1,\quad \Vert \theta_i(t)-\theta_j(t)\Vert\le\epsilon_1\ \\ \ \qquad \qquad \quad\forall j\in{N_i^{in}(t)}\cup\left\{i\right\},\\
		0,\qquad \qquad \ otherwise,
	\end{cases}&
\end{flalign}
\begin{flalign}\label{stoppingc1_3}
&	e^{2}_i(t+1)=
	\begin{cases}
		e^{2}_i(t)\!+\!1,\quad 
		\Vert \theta_j(t)-\theta_j(t-1)\Vert\le\epsilon_2\ \\ \ \qquad \qquad \quad\forall j\in{N_i^{in}(t)}\cup\left\{i\right\},\\ 
		0,\qquad \qquad \ otherwise,
	\end{cases}&
\end{flalign}
\begin{flalign}\label{stoppingc1_4}
&	e^{3}_i(t+1)=
	\begin{cases}
		e^{3}_i(t)\!+\!1,\quad \vert f_j(\theta_j(t))-f_j(\theta_j(t-1))\vert\le\epsilon_3\\ \ \qquad \qquad \quad\forall j\in{N_i^{in}(t)}\cup\left\{i\right\},\\
		0,\qquad \qquad \ otherwise,
	\end{cases}&
\end{flalign}
where $h^1_i(1)\!=\!0$, $e^1_i(1)\!=\!0$, $e^2_i(1)\!=\!0$, $e^3_i(1)\!=\!0$ for all $i\in\mathcal{V}$.
\par
\begin{proposition}\label{stopping_criterion1_proposition}
	Let $D$ denote the diameter of the graph $\mathcal{G}(t:t+S)$, i.e., $D={\rm diam}(\mathcal{G}(t:t+S))$. Consider the distributed finite-time termination algorithm given by (\ref{stoppingc1_1})--(\ref{stoppingc1_4}) under Assumption \ref{Assumption 2}. If there exists an agent $i\in\mathcal{V}$ that satisfies $h^{1}_i(t)\ge SD+1$, then the stopping criterion (\ref{stopping_criterion1}) is satisfied for all agents of the multi-agent system at time $t$.
\end{proposition}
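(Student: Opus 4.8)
The plan is to treat the scalar $h^1_i$ as a network-wide minimum-counter whose value at a single agent certifies that all of the local conditions feeding $e^1,e^2,e^3$ have held simultaneously at every agent over a long recent window. First I would fix the semantics of the auxiliary counters: reading (\ref{stoppingc1_2})--(\ref{stoppingc1_4}) inductively, $e^k_j(s)\ge r$ implies that the corresponding local condition held at agent $j$ and its in-neighbors at each of the $r$ consecutive slots $s-1,\dots,s-r$, namely local $\epsilon_1$-consensus for $k=1$ and the zeroth-order conditions (\ref{stopping_criterion1_b}) and (\ref{stopping_criterion1_c}) for $k=2,3$. This reduces the claim to showing that the single value $h^1_i(t)\ge SD+1$ forces all three counters of \emph{every} agent to be at least $1$ at a common recent slot.

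The core step is a backward unrolling of the min-plus recursion (\ref{stoppingc1_1}). Reading it as $h^1_i(t)=\min_{j\in N^{in}_i(t-1)\cup\{i\}}\min\{h^1_j(t-1),e^1_j(t-1),e^2_j(t-1),e^3_j(t-1)\}+1$, the value $h^1_i(t)=c$ forces $h^1_j(t-1)\ge c-1$ and $e^k_j(t-1)\ge c-1$ for all $k$ and all $j\in N^{in}_i(t-1)\cup\{i\}$. I would then prove by induction on $\ell\ge 1$ that every agent $p$ in the reverse-reachable set $R_\ell$, defined by $R_1=N^{in}_i(t-1)\cup\{i\}$ and $R_{\ell+1}=\bigcup_{p\in R_\ell}(N^{in}_p(t-\ell-1)\cup\{p\})$, satisfies both $h^1_p(t-\ell)\ge c-\ell$ and $e^k_p(t-\ell)\ge c-\ell$; the $h^1$-bound is exactly what propagates one more level and it drags the three $e$-bounds along. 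The explicit $\cup\{i\}$ in each update, reinforced by the self-loops of Assumption \ref{Assumption wr}(ii), keeps the sets monotone, $R_\ell\subseteq R_{\ell+1}$.

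It remains to count the backward steps needed for $R_\ell$ to exhaust $\mathcal{V}$. Invoking Assumption \ref{Assumption 2}, $\mathcal{G}(t:t+S)$ is strongly connected with diameter $D$, so any agent reaches $i$ along a time-respecting directed walk spanning at most $SD$ slots, each of the at most $D$ hops in the union graph being realized inside one length-$S$ window with the self-loops supplying the \emph{wait} moves; hence $R_{SD}=\mathcal{V}$. Taking $\ell=SD$ together with $c=h^1_i(t)\ge SD+1$ yields $e^k_p(t-SD)\ge c-SD\ge 1$ for every agent $p$ and every $k$. By the counter semantics all three local conditions then hold at every agent over the common window whose latest slot is $t-SD-1$; intersecting the longer windows obtained at smaller $\ell$ confirms that (\ref{stopping_criterion1_a})--(\ref{stopping_criterion1_c}) are met simultaneously by all agents, i.e. the stopping criterion (\ref{stopping_criterion1}) holds network-wide.

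The hard part will be the connectivity-and-timing bookkeeping in the third paragraph: turning the informal \emph{information spreads within $SD$ slots} into a rigorous statement for a time-varying directed graph, by realizing each union-graph hop as a genuine time-respecting walk and verifying that the monotone reverse-reachable set indeed covers $\mathcal{V}$ within $SD$ backward steps. A secondary, purely notational, issue is the constant offset between the slot where $h^1_i$ crosses the threshold and the slots at which the conditions are certified; I would absorb this into the convention for declaring the criterion met \emph{in time slot $t$}, which is also why the threshold is set at exactly $SD+1$, so that $c-SD\ge 1$.
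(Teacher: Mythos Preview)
Your proposal is correct and follows essentially the same approach as the paper: both unroll the min-plus recursion (\ref{stoppingc1_1}) backward in time from the agent whose $h^1$-counter exceeds $SD+1$, and use the UJSC assumption to conclude that within $SD$ backward steps every agent's $e^k$-counters must have been at least $1$. Your reverse-reachable-set formulation is simply a more structured packaging of the paper's path-tracing argument (the paper fixes an arbitrary target agent, exhibits a directed path of length $q\le SD-1$ to the special agent in the union graph, and steps backward along it), and you are more explicit than the paper about the connectivity-and-timing bookkeeping that you rightly flag as the delicate step.
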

\begin{proof}
We use the proof idea in \cite{xie2017stop} to prove this proposition. Under Assumption 2, there exists a directed path $(j,j_1),(j_1,j_2),...,(j_q,i)$ from $j$ to $i$ with $q\le SD-1$ for any $i, j\in\mathcal{V}$. Since there is an agent $i\in\mathcal{V}$ satisfying $h^1_i(t)\ge SD+1$ at time t, then at time $t-1$, it follows that $h^1_{j_{q}}(t-1)\ge SD$, $e^1_{j_{q}}(t-1)\ge SD$, $e^2_{j_{q}}(t-1)\ge SD$, $e^3_{j_{q}}(t-1)\ge SD$.
 Similarly, when the time is $t-2$, it holds that $h^1_{j_{q-1}}(t-2)\ge SD-1$,
 $e^1_{j_{q-1}}(t-2)\ge SD-1$,
 $e^2_{j_{q-1}}(t-2)\ge SD-1$,
 $e^3_{j_{q-1}}(t-2)\ge SD-1$.
 Repeat the same steps, it follows that $h^1_{j}(t-q-1)\ge SD-q\ge 1$, $e^1_{j}(t-q-1)\ge SD-q\ge 1$, $e^2_{j}(t-q-1)\ge SD-q\ge 1$, $e^3_{j}(t-q-1)\ge SD-q\ge 1$.
\par Since the agent $j$ is arbitrary, the system has already satisfied the stopping criterion (\ref{stopping_criterion1}) at time $t-q-1$ and consistently satisfies (\ref{stopping_criterion1}) afterwards. Therefore, if there exists $i\in\mathcal{V}$ such that $h^{1}_i(t)\ge SD+1$, the network reaches the stopping criterion (\ref{stopping_criterion1}) at time $t$.
\end{proof}
\par
If the diameter $D$ is unknown, we can replace it with $m-1$, where $m$ is the number of the agents in the distributed network.
\par 
Based on the DPG algorithm and the finite-time termination algorithm, a distributed finite-time optimization method for solving (\ref{ADRCP}) in finite time is formulated. The pseudocode of the method is shown in Algorithm \ref{alg:3}. It is worth noting that in each iteration of Algorithm \ref{alg:3}, each agent only exchanges its local estimates $\theta_i(t)$, $\theta_i(t-1)$, the corresponding function values $f_i(\theta_i(t))$, $f_i(\theta_i(t-1))$, and the termination algorithm parameters $h^{1}_i(t)$, $e^{1}_i(t)$, $e^{2}_i(t)$, $e^{3}_i(t)$ with its neighboring agents, while the local constraint functions, local cost functions, and other private information remain undisclosed, thus protecting the privacy of each agent.
\begin{algorithm}[!t]
	\renewcommand{\algorithmicrequire}{\textbf{Initialization:}}
	\renewcommand{\algorithmicensure}{\textbf{Input:}}
	\caption{Distributed projected gradient algorithm with finite-time termination for solving the {(\ref{ADRCP})}}
	\label{alg:3}
	\begin{algorithmic}[1]
		\REQUIRE  For all $i\in\mathcal{V}$, $
		\Vert \theta_i(0)\Vert=+{\rm Inf}$, $
		\Vert \theta_i(1)\Vert=0$, $h^1_i(1)=0$, $e^1_i(1)=0$, $e^2_i(1)=0$, $e^3_i(1)=0$; $t=1$.
		\ENSURE  $S$; $D$; $\epsilon_1$, $\epsilon_2$, $\epsilon_3$; $\alpha(t)$.
		\renewcommand{\algorithmicensure}{\textbf{Repeat:}}
		\ENSURE 
		\STATE Send $\theta_i(t\!-\!1)$, $\theta_i(t)$, $f_i(\theta_i(t\!-\!1))$, $f_i(\theta_i(t))$, $h^{1}_i(t)$, $e^{1}_i(t)$, $e^{2}_i(t)$ and $e^{3}_i(t)$ to the out-neighbors of agent $i$.\\
		\STATE  Update $\theta_i(t+1)$ according to (\ref{alg _dpg}), and update $h^{1}_i(t+1)$, $e^{1}_i(t+1)$, $e^{2}_i(t+1)$ and $e^{3}_i(t+1)$ according to (\ref{stoppingc1_1})--(\ref{stoppingc1_4}).	
		\STATE Set $t\leftarrow t+1 $		
		\IF {$h^{1}_i(t)\ge SD+1$}		 
		\STATE $\bullet$ $\left[{x_i^k}^\top, {u_i^k}^\top\right]^\top\leftarrow\theta_i(t)$
		\STATE $\bullet$ return $x^k_i$, {\bf exit}
		\ENDIF
	\end{algorithmic}	
\end{algorithm}
\begin{corollary}\label{corollary_dftt}
Under Assumptions \ref{Assumption 2}--\ref{Assumption sr}, Algorithm \ref{alg:3} terminates finitely and each agent obtains a local feasible solution $x^k_i$ of (\ref{ADRCP}) satisfying consensus and zeroth-order stopping conditions to certain tolerances.
\end{corollary}
\begin{proof}
According to Theorem 1 and the triangle inequality, for any $i\neq j\in\mathcal{V}$, it follows that $$\mathop{\lim}\limits_{t\rightarrow\infty}\Vert\theta_i(t)-\theta_j(t)\Vert\!\le\! \mathop{\lim}\limits_{t\rightarrow\infty}[\Vert\theta_i(t)-\theta^*\Vert\!+\!\Vert\theta_j(t)-\theta^*\Vert]\!=\!0.$$ Due to the positivity of the norm, it can be obtained that $\mathop{\lim}\limits_{t\rightarrow\infty}\Vert\theta_i(t)-\theta_j(t)\Vert=0.$ Therefore, for any $\epsilon_1>0$, 
$$\exists T_1:\ \Vert \theta_i(T_1)-\theta_j(T_1)\Vert\le \epsilon_1,\ \forall i,j\in\mathcal{V}.$$ Similarly, we can obtain that for any $\epsilon_2, \epsilon_3>0$, 
$$\exists T_2:\ \Vert \theta_i(T_2)-\theta_i(T_2-1)\Vert\le \epsilon_2,\ \forall i\in\mathcal{V},$$
$$\exists T_3:\ \vert f_i(\theta_i(T_3))-f_i(\theta_i(T_3-1))\vert\le \epsilon_3,\ \forall i\in\mathcal{V}.$$
Therefore, we can conclude that after at most $\max\{T_1, T_2, T_3\}$ iterations of Algorithm \ref{alg:3}, each agent stops updating and furnishes a point $x^k_i$ satisfying consensus and zeroth-order stopping conditions within certain tolerances. In addition, since the Euclidean projection operators are adopted, $x^k_i$ lies within its local feasible region $\mathcal{F}_i^k$.
\end{proof}

\subsection{Distributed Cutting-Surface Consensus Approach for Solving The DRCP}
\label{sec:Algorithm Development and Analysis}
In this subsection, we propose a distributed cutting-surface consensus approach for solving the (\ref{DRCP}), which contains three nested selection structures. The pseudocode and graphic illustration of the algorithm are shown in Algorithm \ref{alg:DRCP} and Figure \ref{graphic_illustration}, respectively. 

\begin{algorithm}[h]
	\renewcommand{\algorithmicrequire}{\textbf{Input:}}
	\renewcommand{\algorithmicensure}{\textbf{Iteration:} For each agent $i\in\mathcal{V}$,}	
	\caption{Distributed cutting-surface consensus algorithm}
	\label{alg:DRCP}
	\begin{algorithmic}[1]
		\REQUIRE For all $i\in\mathcal{V}$, restriction parameter $\epsilon^0_i>0$, finite or empty set $Y^0_i\subset Y_i$, optimal solution candidate $z^0_i$, $\Vert z_i^0\Vert=+{\rm Inf}$; iteration counter $k=0$; reduction parameter $r>1$.
		\ENSURE
		\STATE Compute $x^k_i$ by solving (\ref{ADRCP}) using Algorithm \ref{alg:3}.
		\IF {$x^k_i\in\emptyset$}
		\renewcommand{\algorithmicensure}{\quad \ \ {\color{gray}\% (${\rm ADRCP}^k$) is infeasible (Case I)}}	
	   \ENSURE
		\STATE $\bullet$ $\epsilon^{k+1}_i\leftarrow\epsilon^k_i/r$
		\STATE $\bullet$ $Y^{k+1}_i\leftarrow Y^k_i$
		\ELSE
		\renewcommand{\algorithmicensure}{\quad \ \ {\color{gray}\% (${\rm ADRCP}^k$) is feasible}}	
		\STATE $\bullet$ {$y^{\max,k}_i\in \arg \mathop{\max}\limits_{{y_i}\in Y_i} g_i(x^k_i,y_i)$\qquad\qquad\qquad\quad$({\rm LLP}_i)$}\\
		{\color{gray}\% Some effective numerical methods for solving $({\rm LLP}_i)$ are discussed in Remark \ref{remark_5}.}
		\STATE $\bullet$ {$g^{\max,k}_i=g_i(x^k_i,y^{\max,k}_i)$}
		\IF {$g_i^{\max,k}> 0$}	
		\renewcommand{\algorithmicensure}{\qquad\ \ {\color{gray}\% $x^k_i$ is infeasible for (DRCP) (Case II)}}	
		\ENSURE
	\STATE $\ast$ $\epsilon^{k+1}_i\leftarrow\epsilon^k_i$
	\STATE $\ast$ $Y^{k+1}_i\leftarrow Y^k_i\cup\{y^{\max,k}_i\}$
	\renewcommand{\algorithmicensure}{\qquad\quad\ \  {\color{gray}\% add feasibility cuts}}	
	\ENSURE
	\STATE $\ast$ $z_i^{k+1}\leftarrow z_i^{k}$
	\ELSE 
	\renewcommand{\algorithmicensure}{\qquad\ \ {\color{gray}\% $x^k_i$ is feasible for (DRCP)}}	
		\STATE $\ast$ $z_i^{k+1}\leftarrow x^k_i$.
		\STATE $\ast$ {Check whether stopping criterion (\ref{stopping_criteria_2}) is met by using Algorithm \ref{alg:2}.}
		\IF {not}
		\renewcommand{\algorithmicensure}{\qquad\quad\ \  {\color{gray}\% no desirable solution obtained (Case III)}}
		\ENSURE
		\STATE $\star$  $\epsilon^{k+1}_i\leftarrow\epsilon^k_i/r$
		\STATE $\star$  $Y^{k+1}_i\leftarrow Y^k_i$
		\renewcommand{\algorithmicensure}{\qquad\qquad\ \  {\color{gray}\% add optimality cuts}}	
		\ENSURE
		\ELSE
		\STATE  
		$\star$ return $z_i^{k+1}$ and \textbf{Terminate.}
		\ENDIF		
		\ENDIF
		\ENDIF
		\STATE Set $k\leftarrow k+1$
	\end{algorithmic}
\end{algorithm} 
\begin{figure}[!t]
	\centerline{\includegraphics[width=\columnwidth]{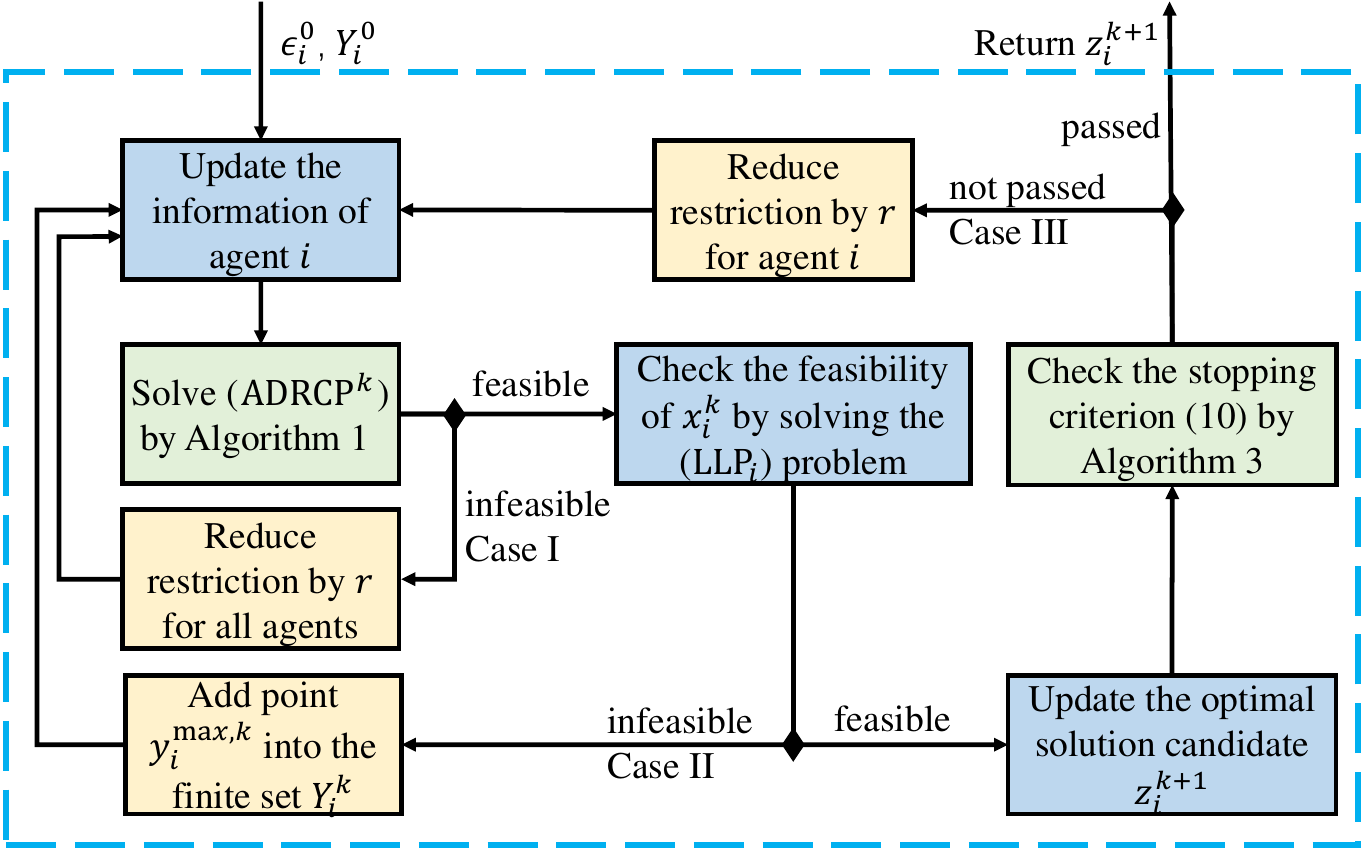}}
	\caption{Graphic illustration of Algorithm \ref{alg:DRCP}. The steps in yellow and blue only require computation, information update and storage within the agent. The green steps require information exchange between each agent on this basis.}
	\label{graphic_illustration}
\end{figure}
\subsubsection{The first selection structure}
 The (\ref{ADRCP}) is neither a relaxation nor a restriction of the (\ref{DRCP}) in general. In the input step, we choose any finite or empty set $Y_i^0\subset Y_i$ and any initial restriction parameter $\epsilon^0_i>0$ for each agent.
  Since there are relatively large restriction parameters $\epsilon^k_i$ in the first few iterations of Algorithm \ref{alg:DRCP}, the global feasible set $\mathcal{F}^k$ of the (\ref{ADRCP}) may be an empty set, in which case, specifically, there exist two situations for the (\ref{ADRCP}): one is that the local feasible set of a certain agent is empty, and the other is that the local feasible sets of all agents are non-empty while their intersection is empty. We use Algorithm \ref{alg:3} to solve the (\ref{ADRCP}). For the first situation, each agent is forced to terminate and cannot obtain a solution. For the second situation, all agents update their local estimates infinitely since the consensus stopping condition (\ref{stopping_criterion1_a}) fails to be satisfied.
\par
In order to terminate the DPG algorithm for any combinations of $Y^k_i$ and $\epsilon^k_i$ in finite time without Assumption \ref{Assumption ip}, we set maximum iteration time $T$ on the basis of Algorithm \ref{alg:3}. When the DPG algorithm cannot yield a solution or fail to reach the stopping criterion (\ref{stopping_criterion1}) within $T$, we can determine that the (\ref{ADRCP}) is infeasible. Therefore, we relax the (\ref{ADRCP}) by proportionally decreasing the restriction parameters $\epsilon_i^k$ for all agents. If the DPG algorithm terminates by satisfying the stopping criterion (\ref{stopping_criterion1}) within $T$, then we obtain the approximate optimal solution of the (\ref{ADRCP}) and update $x_i^k$.
\subsubsection{The second selection structure} 
When the (\ref{ADRCP}) is feasible, the obtained solution $x^k_i$ of agent $i$ has two cases. One is that $x^k_i$ is in its local feasible region of the (\ref{DRCP}), i.e., $x^k_i\!\in\!\mathcal{F}_i$, the other is that $x^k_i$ is not in agent $i$'s feasible region of the (\ref{DRCP}), i.e., $x^k_i\!\notin\!\mathcal{F}_i$. To judge whether $x^k_i$ belongs to $\mathcal{F}_i$, we introduce a lower level program $({\rm LLP}_i)$.
\par
If $g^{\max,k}_i\le0$, it indicates that $x^k_i\in \mathcal{F}_i$. Then, we assign the value of $x_i^k$ to the (\ref{DRCP}) optimal solution candidate $z_i^{k+1}$, and reduce $\epsilon^k_i$ proportionally to relax the (\ref{ADRCP}) problem which is aimed to obtain a solution that is closer to the optimal solution point of the (\ref{DRCP}) in later iterations ({\it Optimality cuts}). If $x_i^k\notin \mathcal{F}_i$, the obtained $x_i^k$ has no practical significance and needs to be discarded. At this time, we add the point $y^{\max,k}_i$ to $Y_i^k$ to tighten the (\ref{ADRCP}), aiming to make $x^k_i$ locally feasible to the (\ref{DRCP}) in later iterations ({\it Feasibility cuts}).
\par
\vspace{2mm}
\begin{remark}\label{remark_5}
Given that $x_i^k$ is fixed for each agent $i\in\mathcal{V}$, the $({\rm LLP}_i)$ is a standard optimization problem with a set constraint $y_i\in Y_i$, where $y_i$ is the decision variable. In order to ensure the local feasibility of the solutions generated by Algorithm \ref{alg:DRCP}, each agent needs to solve its $({\rm LLP}_i)$ at {\it global} optimality by local computation. \cite{burger2012distributed,mutapcic2009cutting} used convex numerical methods to locate the maximum violation for the convex $({\rm LLP}_i)$. When the $({\rm LLP}_i)$ is nonconvex, \cite{FU2015184} presents an effective numerical method under the assumption that $g_i$ is continuously differentiable. Specifically, we first introduce an additional function $\chi(y_i)$:
\begin{flalign}
		\dot \chi(y_i)=
	\begin{cases}
	0,\qquad\qquad\ \  \chi(y_i)\ge g_i(x^k_i,y_i)\\ \qquad\qquad\quad\ \ \ or\ \dot g_i(x^k_i,y_i)\le 0,\\
		\dot g_i(x^k_i,y_i), \quad\ otherwise,
	\end{cases}
\end{flalign}
where $x^k_i$ is fixed, $y_i\in Y_i$, $\dot \chi(y_i)$ represents the derivative of the function $\chi(y_i)$, and $\dot g_i(x^k_i,y_i)$ represents the derivative of the function $g_i(x^k_i,y_i)$ with respect to $y_i$. Set the compact set $Y_i=\left\{y_i|y_0 \le y_i\le y_f\right\}$ and $\chi(y_0)=g_i(x^k_i,y_0)$. Then, adopting an adaptive step-size solver with rigorous state-event location, we obtain the maximum constraint violation ${\max}_{y_i\in Y_i}\ g_i(x^k_i,y_i)=\chi(y_f)$. Moreover, $y^{\max,k}_i$ is obtained from the last event triggered in the integration process.
\end{remark}
\vspace{2mm}
\subsubsection{The third selection structure}
To ensure finite-time termination of Algorithm \ref{alg:DRCP}, we adapt the distributed finite-time termination algorithm in Section \ref{sec:Finite Time Termination} to verify the following stopping criterion:
for any two agents $i\neq j$, and $i,j\in\mathcal{V}$
\begin{subequations}\label{stopping_criteria_2}
	\begin{align}\label{stopping_criteria_2(a)}
		&\Vert z_i^{k+1}-z_j^{k+1}\Vert\le \epsilon_4 \\\label{stopping_criteria_2(b)}
		&\Vert z_i^{k+1}-z_i^{k}\Vert\le \epsilon_5 \\\label{stopping_criteria_2(c)}
		&\vert f_i(z_i^{k+1})-f_i(z_i^{k})\vert\le \epsilon_6
	\end{align}
\end{subequations}
where (\ref{stopping_criteria_2(a)}) refers to the consensus stopping condition, while (\ref{stopping_criteria_2(b)}) and (\ref{stopping_criteria_2(c)}) denote the zeroth-order stopping conditions based on the optimal solution candidates $z_i^k$ and function values $f_i(z_i^k)$ respectively. The pseudocode is shown in Algorithm {\ref{alg:2}}.
\begin{algorithm}[!t]
	\renewcommand{\algorithmicrequire}{\textbf{Initialization:}}
	\renewcommand{\algorithmicensure}{\textbf{Input:}}
	\caption{Distributed termination algorithm for cutting-surface consensus approach}
	\label{alg:2}
	\begin{algorithmic}[1]
		\REQUIRE $h^2_i(t)=0$, $e^4_i(t)=0$, $e^5_i(t)=0$, $e^6_i(t)=0$, $\forall i\in\mathcal{V}$; $t=0$.	
		\ENSURE $z_i^k$, $z_i^{k+1}$, $\forall i\in\mathcal{V}$; $S$; $D$; $\epsilon_4$, $\epsilon_5$, $\epsilon_6$.
		\renewcommand{\algorithmicensure}{\textbf{Repeat:}}
		\ENSURE 	
		\STATE  {\bf Local information exchange:} send the values $z_i^k$, $z_i^{k+1}$, $f_i(z_i^k)$, $f_i(z_i^{k+1})$, $e^{4}_i(t)$, $e^{5}_i(t)$, $e^{6}_i(t)$ and $h^{2}_i(t)$ to the out-neighbors of agent $i$\\
		\STATE  {\bf Local variable calculate:} 
		\begin{flalign}\label{stoppingc2_1}
			&h^2_i(t\!+\!1)\!=\!\mathop{\min}\limits_{j\in N^{in}_i(t)\cup\left\{i\right\}}\!\!\left\{h^2_i(t),e^{4}_i(t),e^{5}_i(t),e^{6}_i(t)\right\}\!+\!1,&
		\end{flalign}
		\begin{flalign}\label{stoppingc2_2}
			&	e^{4}_i(t\!+\!1)\!=\!
			\begin{cases}
				e^{4}_i(t)\!+\!1,\quad \Vert z_i^{k+1}-z_j^{k+1}\Vert\!\le\!\epsilon_4\ \\ \qquad \qquad \quad \ \forall j\in{N_i^{in}(t)}\cup\left\{i\right\},\\
				0,\quad\ \qquad \quad otherwise,
			\end{cases}&
		\end{flalign}
		\begin{flalign}\label{stoppingc2_3}
			&	e^{5}_i(t\!+\!1)\!=\!
			\begin{cases}
				e^{5}_i(t)\!+\!1,\quad
				\Vert z_j^{k+1}-z_j^{k}\Vert\!\le\!\epsilon_5\ \\ \qquad \qquad \quad\ \forall j\in{N_i^{in}(t)}\cup\left\{i\right\},\\ 
				0,\qquad\ \quad \quad otherwise,
			\end{cases}&
		\end{flalign}
		\begin{flalign}\label{stoppingc2_4}
			&	e^{6}_i(t\!+\!1)\!=\!
			\begin{cases}
				e^{6}_i(t)\!+\!1,\ \vert f_j(\!z_j^{k+1}\!)\!-\!f_j(z_j^{k})\vert\!\le\!\epsilon_6\\ \qquad \qquad \ \ \forall j\in{N_i^{in}(t)}\cup\left\{i\right\},\\
				0,\qquad\ \quad \ otherwise.
			\end{cases}&
		\end{flalign}
		
		\STATE Set $t\leftarrow t+1 $		
		\IF {$t>SD$\ \&\ $h^{2}_i(t)\ge SD+1$}		 
		\STATE stopping criterion (\ref{stopping_criteria_2}) is satisfied, {\bf exit}
		\ENDIF
		\renewcommand{\algorithmicensure}{\textbf{Until:}}
		\ENSURE {a predefined stopping rule (e.g.  $t>SD+1$) is satisfied.}		
	\end{algorithmic}	
\end{algorithm}

\begin{proposition}\label{stopping_criterion2_proposition}
	Consider Algorithm \ref{alg:2} under Assumption \ref{Assumption 2}. If there exists an agent $i\in\mathcal{V}$ that satisfies $h^{2}_i(t)\ge SD+1$ at time $SD+1$, then the stopping criterion (\ref{stopping_criteria_2}) is satisfied for all agents of the multi-agent system.
\end{proposition}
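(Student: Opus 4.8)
The plan is to transcribe the argument of Proposition \ref{stopping_criterion1_proposition} almost verbatim, because the recursions (\ref{stoppingc2_1})--(\ref{stoppingc2_4}) of Algorithm \ref{alg:2} are structurally identical to (\ref{stoppingc1_1})--(\ref{stoppingc1_4}): $h^2_i$ plays the role of $h^1_i$, the counters $e^4_i,e^5_i,e^6_i$ replace $e^1_i,e^2_i,e^3_i$, and the monitored predicates are (\ref{stopping_criteria_2(a)})--(\ref{stopping_criteria_2(c)}) evaluated on $z_i^k,z_i^{k-1},f_i(z_i^k)$ instead of on $\theta_i(t),\theta_i(t-1),f_i(\theta_i(t))$. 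First I would record the one structural difference, which in fact simplifies matters: during a single call of Algorithm \ref{alg:2} the candidates $z_i^k$ and $z_i^{k-1}$ are frozen, since they are produced by the outer loop of Algorithm \ref{alg:DRCP} before the termination check is executed. Hence the predicates in (\ref{stopping_criteria_2(b)})--(\ref{stopping_criteria_2(c)}) are time-invariant, and the pairwise predicate in (\ref{stopping_criteria_2(a)}) depends only on the pair $(i,j)$, not on the slot $t$.

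Next I would carry out the backward induction along a connectivity path, relabelling the triggering agent as $j$ and taking $i\in\mathcal{V}$ arbitrary (the convention of the proof of Proposition \ref{stopping_criterion1_proposition}). Assume $h^2_j(t)\ge SD+1$ at the $(SD+1)$-th slot, i.e. $t=SD+1$. By Assumption \ref{Assumption 2} there is a directed path $(i,i_1),(i_1,i_2),\dots,(i_q,j)$ realizable within the connectivity window with $q\le SD-1$. Peeling off the minimum in (\ref{stoppingc2_1}) one slot at a time, $h^2_j(t)\ge SD+1$ forces $h^2_{i_q}(t-1),e^4_{i_q}(t-1),e^5_{i_q}(t-1),e^6_{i_q}(t-1)\ge SD$, then $h^2_{i_{q-1}}(t-2)\ge SD-1$, and so on, so that after $q+1$ steps $e^4_i(t-q-1),e^5_i(t-q-1),e^6_i(t-q-1)\ge SD-q\ge 1$. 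By (\ref{stoppingc2_2})--(\ref{stoppingc2_4}) a counter value $\ge 1$ certifies that the corresponding predicate held for agent $i$ against its in-neighbours; because the monitored quantities are frozen, this is precisely local $\epsilon_4$-consensus of agent $i$ together with (\ref{stopping_criteria_2(b)})--(\ref{stopping_criteria_2(c)}), and it therefore still holds at slot $t$. Since $i$ is arbitrary, every agent's monitored conditions hold at $t$.

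The explicit hypothesis that the trigger occurs at the $(SD+1)$-th slot (equivalently the guard $t>SD$ in line~4 of Algorithm \ref{alg:2}) is exactly what keeps the induction legitimate: the smallest counter index entering the recursion is $t-q-1\ge t-SD\ge 1$ and the earliest predicate evaluation is at $t-q-2\ge 0$, so nothing is read before the initialization at $t=0$. This is also self-consistent, since $h^2_i$ increases by at most one per slot from $h^2_i(0)=0$, whence $h^2_i(t)\ge SD+1$ already entails $t\ge SD+1$, making $SD+1$ the earliest slot at which the guard can fire.

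I expect the only genuine subtlety to be the gap between the \emph{local} $\epsilon_4$-consensus certified by $e^4$ (neighbour pairs at a given slot) and the \emph{global} pairwise form in which (\ref{stopping_criteria_2(a)}) is written; the freezing of $z^k$ removes the time dependence but not this local-to-global aggregation. This point is already implicit in Proposition \ref{stopping_criterion1_proposition}, and I would resolve it identically, reading ``all agents meet the stopping criterion'' through the Definition of local $\epsilon_1$-consensus so that the conclusion is the simultaneous satisfaction of every agent's monitored conditions at slot $t$. The remainder is bookkeeping on the min-plus recursion, which transfers unchanged from Proposition \ref{stopping_criterion1_proposition}.
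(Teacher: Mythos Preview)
Your proposal is correct and takes precisely the paper's approach: the paper's own proof consists of the single sentence ``According to Proposition \ref{stopping_criterion1_proposition}, the above result is straightforward,'' and you have simply unpacked that referral by noting the structural identity between (\ref{stoppingc2_1})--(\ref{stoppingc2_4}) and (\ref{stoppingc1_1})--(\ref{stoppingc1_4}) and rerunning the backward path induction. Your additional observations---that the candidates $z_i^k,z_i^{k-1}$ are frozen during a call of Algorithm~\ref{alg:2}, and that the guard $t>SD$ keeps the induction indices nonnegative---are correct refinements that the paper leaves implicit.
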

\begin{proof}
	Similar to the proof idea of Proposition \ref{stopping_criterion1_proposition}, the above result can be easily derived.
\end{proof}
\par
Algorithm \ref{alg:2} provides Algorithm \ref{alg:DRCP} with termination detection. If there exists an agent satisfying $g_i^{\max,k}\le 0$ in Step 13 of Algorithm \ref{alg:DRCP}, Algorithm \ref{alg:2} will start to execute. When the network satisfies the stopping criterion (\ref{stopping_criteria_2}), all agents spontaneously issue an exit command to terminate Algorithm \ref{alg:DRCP}. Additionally, an approximate optimal solution $z_i^{k+1}$ of the (\ref{DRCP}) is output for each agent.
\par
Different from the distributed finite-time termination algorithm introduced in Section \ref{sec:Finite Time Termination}, Algorithm \ref{alg:2} serves primarily to judge the satisfaction of the stopping criterion (\ref{stopping_criteria_2}) of the optimal solution candidates $z_i^k$ for all agents at the $k$-th iteration, where the value of $z_i^k$ does not change with respect to time $t$, but it is affected by the iteration $k$. In addition, Algorithm \ref{alg:2} requires only a fixed number of time ($SD+1$) to determine whether Algorithm \ref{alg:DRCP} needs to be terminated.
\color{black}
\subsection{Convergence Analysis}
This subsection presents the main result of Algorithm \ref{alg:DRCP}. Firstly, to ensure finite-time termination of Algorithm \ref{alg:DRCP}, we make the following assumption.
\begin{assumption}[Strict convexity]\label{Assumption 5}
	The global objective function $F(x)$ of the (\ref{DRCP}) is strictly convex, i.e., for any two point $x_1\neq x_2\in X$, $0<\beta<1$, there is
	\begin{equation}\nonumber
		F(\beta x_1+(1-\beta) x_2)<\beta F(x_1)+(1-\beta)F(x_2).
	\end{equation} 
\end{assumption}
\par
 Note that for the case that $F(x)$ is a convex function, we can adopt a tie-break rule in solving the (\ref{ADRCP}) to ensure the uniqueness of the solution.
\begin{proposition}[Finite-time Termination]\label{dcsc_ftt_proposition}
	For each agent $i\in\mathcal{V}$, take any initial restriction parameter $\epsilon_i^0>0$, any initial finite or empty set $Y^0_i\subset Y_i$, and any reduction parameter $r> 1$. Under Assumptions \ref{Assumption 1}, \ref{Assumption 2}, \ref{Assumption wr}--\ref{Assumption 5}, the distributed cutting-surface consensus algorithm terminates in a finite number of iterations and obtain solutions for all agents that satisfy consensus and zeroth-order stopping conditions to certain tolerances.
\end{proposition}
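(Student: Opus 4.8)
The plan is to argue by contradiction: assume Algorithm~\ref{alg:DRCP} never exits through the third selection structure, so it performs infinitely many outer iterations. First I would note that each single iteration is itself finite: solving (\ref{ADRCP}) is capped by the slot budget $T$ applied to the DPG algorithm with its finite-time termination (Corollary~\ref{corollary_dftt}); each (\ref{LLP_i}) is a deterministic maximization over the compact set $Y_i$; and the termination test of Algorithm~\ref{alg:2} needs only $SD+1$ slots (Proposition~\ref{stopping_criterion2_proposition}). Hence ``running forever'' really means infinitely many iterations $k$, and it suffices to show that under this hypothesis the stopping criterion~(\ref{stopping_criteria_2}) is eventually met, which is the desired contradiction.

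The heart of the proof is to drive every restriction parameter to zero. I would first bound the solvability cuts: by the interior-point part of Assumption~\ref{Assumption 1}, $\max_{y_i\in Y_i} g_i(\hat x,y_i)=-\delta_i<0$, so as soon as $\epsilon_i^k<\delta_i$ the point $\hat x$ is feasible for (\ref{ADRCP}) (recall $Y_i^k\subset Y_i$) and the problem is solvable; since each solvability cut divides $\epsilon_i^k$ by $r>1$, only finitely many occur. The crucial step is to bound the feasibility cuts at a fixed level: along any maximal block of iterations in which $\epsilon_i^k$ is unchanged, every action of agent $i$ is a feasibility cut, generating points $y_i^{(1)},y_i^{(2)},\dots\in Y_i$ with $g_i(x_i^{(q)},y_i^{(p)})\le-\epsilon_i$ for $p<q$ (the Euclidean projection of the DPG keeps $x_i^{(q)}$ feasible for every installed cut) while $g_i(x_i^{(q)},y_i^{(q)})=g_i^{\max}(x_i^{(q)})>0$. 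Compactness of $Y_i$ extracts a convergent subsequence of the $y_i^{(p)}$, and uniform continuity of $g_i$ on $X\times Y_i$ then forces $|g_i(x_i^{(q)},y_i^{(q)})-g_i(x_i^{(q)},y_i^{(p)})|<\epsilon_i$ for large $p,q$, contradicting the fact that these two values differ by more than $\epsilon_i$. Thus feasibility cuts are finite per level, $\epsilon_i^k$ is reduced infinitely often, and $\epsilon_i^k\to0$ for every $i$.

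With $\epsilon_i^k\to0$ and $Y_i^k$ increasing, Proposition~\ref{dcsc_convergence_proposition} gives $x^{k,*}\to x^*$, where $x^*$ is the unique optimum of (\ref{DRCP}) guaranteed by the strict convexity of Assumption~\ref{Assumption 5}. Since feasibility cuts are finite per level, the feasible (optimality-cut) iterations form an infinite subsequence $\{k_\ell\}$ on which $z_i^{k_\ell}=x_i^{k_\ell}$ is the DPG output; by Corollary~\ref{corollary_dftt} these outputs are in $\epsilon_1$-consensus and approximate $x^{k_\ell,*}$ to the inner accuracy set by $\epsilon_1,\epsilon_2,\epsilon_3$, so they settle into a shrinking neighborhood of $x^*$. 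I would then verify~(\ref{stopping_criteria_2}) for large $\ell$: part~(\ref{stopping_criteria_2(a)}) holds because $\epsilon_1\le\epsilon_4$; parts~(\ref{stopping_criteria_2(b)}) and~(\ref{stopping_criteria_2(c)}) hold because the feasible sequence is Cauchy, so $\|z_i^{k_\ell}-z_i^{k_{\ell-1}}\|$ and, by continuity of $f_i$, $|f_i(z_i^{k_\ell})-f_i(z_i^{k_{\ell-1}})|$ drop below $\epsilon_5,\epsilon_6$. Proposition~\ref{stopping_criterion2_proposition} then certifies that some agent attains $h_i^2\ge SD+1$ and the routine exits, contradicting non-termination.

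The main obstacle is exactly the per-level bound on feasibility cuts: this is where the geometry of the semi-infinite constraint enters, and it relies on the strict right-hand restriction $-\epsilon_i$ producing a uniform gap that no convergent family of cut points can sustain. A secondary but delicate point, which I would treat carefully, is the bookkeeping between the inner DPG accuracy ($\epsilon_1,\epsilon_2,\epsilon_3$) and the outer tolerances ($\epsilon_4,\epsilon_5,\epsilon_6$), compounded by the fact that $z_i^{k-1}$ can be a stale value inherited from intervening feasibility-cut iterations; consequently conditions~(\ref{stopping_criteria_2(b)})--(\ref{stopping_criteria_2(c)}) must be read along the feasible subsequence $\{k_\ell\}$, and $\epsilon_5,\epsilon_6$ must dominate the residual inner inaccuracy.
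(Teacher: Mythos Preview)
Your proposal is correct and follows the same three-case decomposition as the paper (solvability cuts, feasibility cuts, optimality cuts), but your treatment of the feasibility-cut case is genuinely different and arguably cleaner. The paper argues Case~II by extracting a convergent subsequence of the \emph{decision} iterates $x_i^k$ in the compact set $X$ (invoking Proposition~\ref{dcsc_convergence_proposition} along the way) and then using uniform continuity of $g_i$ in the $x$-variable to show that an already-installed cut point $\overline y_i^{\max}$ eventually certifies feasibility. You instead extract a convergent subsequence of the \emph{cut points} $y_i^{(p)}$ in the compact set $Y_i$ and use uniform continuity in the $y$-variable to produce the contradiction $g_i(x_i^{(q)},y_i^{(q)})>0$ versus $g_i(x_i^{(q)},y_i^{(p)})\le -\epsilon_i$ with $|g_i(x_i^{(q)},y_i^{(q)})-g_i(x_i^{(q)},y_i^{(p)})|<\epsilon_i$. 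Your route is more self-contained: it does not rely on Proposition~\ref{dcsc_convergence_proposition}, which itself presupposes $\epsilon_i^k\to 0$ and hence implicitly assumes what Case~II is meant to establish; your argument therefore sidesteps that potential circularity. For Cases~I and~III your reasoning is essentially the paper's. One small point: your justification of~(\ref{stopping_criteria_2(a)}) via $\epsilon_1\le\epsilon_4$ only applies when $z_i^k$ and $z_j^k$ come from the same DPG run; when one is stale you must fall back on the ``shrinking neighborhood of $x^*$'' argument you already sketched, which is precisely what the paper does by writing $z_i^{k_i}=x^{k_i,*}+\delta_i^{k_i}v_i^{k_i}$ and bounding $\Vert z_i^{k_i}-z_j^{k_j}\Vert$ via $\Vert x^{k_i,*}-x^{k_j,*}\Vert$.
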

\begin{proof}
To prove the finite-time termination of the algorithm, we only need to exclude three possible infinite cases (Case I--III), which are presented in Figure \ref{graphic_illustration}.\\
{\bf 1) Exclusion of infinite Case I:}
By Assumption 1, there exists at least one slater point $\hat{x}$ for the (${\rm DRCP}$), we have
\begin{equation}\label{100000000002}
	\hat{x}\in\hat{\mathcal{F}}= \bigcap^m_{i=1} \left\{x\in X|g_i(x,y_i)\le-\epsilon_i^s,\forall y_i\in Y_i\right\},
\end{equation}
where $0<\epsilon_i^s\le \min_{y_i\in Y_i}{-g_i(\hat x,y_i)},\ \forall i\in\mathcal{V}$.
Assume that the (${\rm ADRCP}^k$) is infeasible at the $k$-th iteration, i.e., $\mathcal{F}^k=\emptyset$. Then, after at most $K_1=\max_{i\in\mathcal{V}} \lceil\log_r \epsilon^k_i/\epsilon^s_i \rceil$ updates of restriction parameters $\epsilon^k_i$, it follows that $0<\epsilon_i^{K_1}\le\epsilon_i^s$ and $Y_i^{K_1}\subset Y_i$. Hence, it can be obtained that $\mathcal{F}^{K_1}=\bigcap^m_{i=1}\{x\in X|g_i(x,y_i)\le -\epsilon^{K_1}_i, \ \forall y_i\in Y_i^{K_1}\}\supset\hat{\mathcal{F}}\neq\emptyset$. Therefore, after at most $K_1$ executions of Case I in Algorithm \ref{alg:DRCP}, the (${\rm ADRCP^k}$) becomes feasible.\\
{\bf 2) Exclusion of infinite Case II:}
Referring to the proof idea of Lemma 2.4 in \cite{Mitsos}, we exclude the infinite Case II for any agent in the loop of Algorithm \ref{alg:DRCP}. For any $i\in\mathcal{V}$, we first consider a sequence of solutions of the (${\rm ADRCP^k}$) for this agent. Since $X$ is a compact set, we can select a converging subsequence $x^k_i\rightarrow \widetilde{x}_i$ for agent $i$, where $\widetilde{x}_i$ is the convergence point of the subsequence.
Assume that at the $k$-th iteration, the point $x^k_i$ does not satisfy its local constraints $\mathcal{F}_i$, and $ y^{\max,k}_i$ is obtained through solving the (${{\rm LLP}_i}$). Thus, the following relation holds:
$$g_i(x^k_i, y^{\max,k}_i)>0.$$
In step 10 of Algorithm \ref{alg:DRCP}, the point $y^{\max,k}_i$ is added into the finite set of agent $i$. Hence, for any $l>k\ge 0$, it follows that $y^{\max,k}_i\in Y^l_i$.
In Section \ref{sec:Approximation Problem}, the DPG algorithm guarantees the local feasibility of the solution due to the adoption of the Euclidean projection. Hence, for any $l>k\ge 0$, it holds that $$g_i(x^l_i, y^{\max,k}_i)\le -\epsilon^l_i.$$
Due to the compactness of $X\times Y_i$ and continuity of $g_i(x,y_i)$, $g_i(x,y_i)$ is uniformly continuous and hence
\begin{equation}\label{10000000003}
\exists \delta>0\ \  g_i(x, y^{\max,k}_i)\le -\epsilon^l_i/2<0\ \ \Vert x-x^l_i\Vert<\delta\ \ \forall  l>k\ge 0.
\end{equation}
As a result of $x^k_i\rightarrow \widetilde{x}_i$, for any $\delta>0$ there is 
\begin{equation}\label{10000000004}
\exists K_2:\ \Vert x^l_i-x^k_i\Vert<\delta\ \ \forall l,k:\  l>k\ge K_2.
\end{equation}
Combine the results (\ref{10000000003}) and (\ref{10000000004}), we can obtain that
\begin{equation}\nonumber
	\exists K_2: \  g_i(x^k_i, y^{\max,k}_i)\le -\epsilon_i^l/2<0\ \ \forall k:\  k\ge K_2.
\end{equation}
Therefore, we can conclude that for any agent $i\in\mathcal{V}$, after at most $K_2$ iterations of Algorithm \ref{alg:DRCP}, the point $x^{K_2}_i$ obtained by solving the (${{\rm ADRCP}^{K_2}}$) lies within its local constraints set $\mathcal{F}_i$ in the $({\rm DRCP})$.\\
{\bf 3) Exclusion of infinite Case III:}
	Suppose that the (${{\rm ADRCP}^k}$) is feasible at the $k$-th iteration of Algorithm \ref{alg:DRCP}. By using Algorithm \ref{alg:3}, each agent can obtain an approximate optimal solution $x^k_i$ of the (${\rm ADRCP}^k$) (cf. Corollary \ref{corollary_dftt}). Moreover, for any agent $i\in\mathcal{V}$, we have
	\begin{equation}
		\Vert x^k_i-x^{k,*}\Vert=\delta^k_i
	\end{equation}
    where $k\ge 0$. $\delta^k_i$ is a positive scalar satisfying $0\le\delta^k_i\ll \min\left\{\epsilon_4,\epsilon_5,\epsilon_6\right\}$ as Algorithm \ref{alg:3} is fully iterated by setting small parameters $\epsilon_1,\epsilon_2,\epsilon_3$, where $\max\left\{\epsilon_1,\epsilon_2,\epsilon_3\right\}\ll \min\left\{\epsilon_4,\epsilon_5,\epsilon_6\right\}$. 
    Then, we have $x^k_i=x^{k,*}+\delta^k_iv^k_i$, where $v^k_i=({x^k_i-x^{k,*}})/\Vert x^k_i-x^{k,*}\Vert$. For any $i\neq j\in\mathcal{V}$, suppose that they update $z_i^{k_i+1}, z_j^{k_j+1}$ by Step 13 at the $k_i$-th and $k_j$-th iterations of Algorithm \ref{alg:DRCP}, respectively. Thus, we have
    \begin{equation}\label{10000000005}
    	\hspace{-3mm}
    	\begin{aligned}
    		\Vert z_i^{k_i+1}\!-\!z_j^{k_j+1}\Vert&=\!\Vert x_i^{k_i}\!-\!x_j^{k_j}\Vert\\
    		&\le\! \Vert x^{k_i,*}\!-\!x^{k_j,*}\Vert\!+\!\Vert\delta^{k_i}_i v^{k_i}_i\!-\!\delta^{k_j}_j v^{k_j}_j\Vert\\
    		&\le\! \Vert x^{k_i,*}\!-\!x^{k_j,*}\Vert\!+\!\delta^{k_i}_i\!+\!\delta^{k_j}_j,\\
    		&\le\! \Vert x^{k_i,*}\!-\!x^{*}\Vert\!+\!\Vert x^{k_j,*}\!-\!x^{*}\Vert\!+\!\delta^{k_i}_i\!+\!\delta^{k_j}_j.
    	\end{aligned}
    \end{equation} 
    where the last three inequalities hold based on the triangle inequality. Next, we prove the convergence of the sequence $\{x^{k,*}\}_k$. We first make the following settings:
    \begin{flalign}\nonumber
    	\begin{aligned}
    		\mathcal{A}^k&=\left\{x\in X|g_i(x,y_i)\le -\epsilon^k_i, \ \forall y_i\in Y_i, \ i\in\mathcal{V}\right\}, \\
    		\mathcal{B}^k&=\left\{x\in X|g_i(x,y_i)\le 0, \ \forall y_i\in Y_i^k, \ i\in\mathcal{V}\right\},\\
    	\end{aligned}
    \end{flalign}
    where $\epsilon^k_i$ and $Y_i^k$ are the parameters corresponding to the (${\rm ADRCP^k}$) at the $k$-th iteration of Algorithm \ref{alg:DRCP}. 
    Taking $F(x)$ as the objective function and each of the four sets $\mathcal{F}$, $\mathcal{F}^k$, $\mathcal{A}^k$ and $\mathcal{B}^k$ as the constraint, we obtain the corresponding optimal solutions $x^*$, $x^{k,*}$, $x^{k,a}$, $x^{k,b}$ by solving them in turn. Since the relation $Y^{k}_i\subset Y^{k+1}_i$ holds for any $k\ge 0$ and $i\in\mathcal{V}$, $X$ and $Y_i$ are compact, and $g_i$ are continuous on $X\times Y_i$ for any $i\in\mathcal{V}$, by Lemma 6.1 in \cite{Shapiro2009}, it holds that
    \begin{equation}\label{200000001}
    	\mathop{\lim}\limits_{k\rightarrow \infty}  F(x^{k,b})= F(x^*).
    \end{equation}	
    In addition, since $\epsilon^k_i\rightarrow 0$ as $k\rightarrow \infty$ for all agents, it follows that $\lim_{k\rightarrow\infty}\mathcal{A}^k=\mathcal{F}$.
    Based on this result, we obtain that
    \begin{flalign}\label{20000002}
    	\mathop{\lim}\limits_{k\rightarrow \infty}  F(x^{k,a})= F(x^*).
    \end{flalign}
    	According to the definitions of the sets $\mathcal{F}^k$, $\mathcal{A}^k$ and $\mathcal{B}^k$, we can obtain that  $\mathcal{A}^k\subset\mathcal{F}^{k}\subset\mathcal{B}^{k}$. Therefore, the following relation holds for any $k\ge 0$,
    	\begin{flalign}\label{100000000}
    F(x^{k,b})\le F(x^{k,*})\le F(x^{k,a})
    \end{flalign}
    By the squeeze theorem (see \cite{Petra2018}, P124) as well as the relations (\ref{200000001}), (\ref{20000002}) and (\ref{100000000}), it holds that
    \begin{equation}\label{100000001}
    \mathop{\lim}\limits_{k\rightarrow \infty}  F(x^{k,*})= F(x^*).
    \end{equation}
    By (\ref{100000001}) and the strict convexity of the function $F(x)$ (cf. Assumption \ref{Assumption 5}), it can be obtained that $x^{k,*}\rightarrow x^*$.
    \par 
    Combining the convergence result of the sequence $\{x^{k,*}\}_k$ with (\ref{10000000005}), it can be obtained that for a given $\epsilon_4>0$,
    $$\exists \overline k_i, \overline k_j: \Vert z_i^{\overline k_i+1}-z_j^{\overline k_j+1}\Vert\le \epsilon_4,\ \forall i,j\in\mathcal{V}.$$    
Hence, the consensus stopping condition (\ref{stopping_criteria_2(a)}) can be met in finite iterations of Algorithm \ref{alg:DRCP}. 
Similarly, we can verify that the last two conditions in the stopping criterion (\ref{stopping_criteria_2}) can also be satisfied within finite iterations. Therefore, we can obtain that Algorithm \ref{alg:DRCP} terminates finitely and the solutions for all agents satisfy consensus and zeroth-order stopping conditions to certain tolerances.
\end{proof}
\begin{proposition}[Local Feasibility]\label{dcsc_lf_proposition}
	Suppose Assumptions \ref{Assumption 1}, \ref{Assumption 2}, \ref{Assumption wr}--\ref{Assumption 5} are satisfied. The distributed cutting-surface consensus algorithm generates local feasible solutions for the (\ref{DRCP}), i.e., $z_i^{k+1}\in\mathcal{F}_i,\ \forall i\in\mathcal{V}$.
\end{proposition}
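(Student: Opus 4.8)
The plan is to show that local feasibility is an invariant maintained by Algorithm \ref{alg:DRCP}: the candidate $z_i^k$ is, by construction, always a copy of a point that has already been certified feasible by the lower level program. First I would isolate the only assignment that can introduce a \emph{new} value into $z_i^k$, namely Step 6, which sets $z_i^k = x_i^k$; every other branch (the feasibility-cut branch, which sets $z_i^k = z_i^{k-1}$, and the solvability-cut branch, which produces no new candidate and hence retains $z_i^{k-1}$) merely copies the previous candidate. The heart of the argument is a single observation about Step 6: this assignment is reached only inside the branch $g_i^{\max}(x_i^k)\le 0$. Because Step 4 solves $(\ref{LLP_i})$ to \emph{global} optimality, we have $g_i^{\max}(x_i^k)=\max_{y_i\in Y_i} g_i(x_i^k,y_i)$, so $g_i^{\max}(x_i^k)\le 0$ forces $g_i(x_i^k,y_i)\le 0$ for \emph{every} $y_i\in Y_i$. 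Since the DPG iterates are generated by Euclidean projection onto the local feasible set and therefore satisfy $x_i^k\in X$, this yields $x_i^k\in\mathcal{F}_i$, and hence $z_i^k = x_i^k\in\mathcal{F}_i$ whenever Step 6 fires.

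Next I would close the induction on the iteration counter $k$. Let $k_0$ be the first iteration at which agent $i$ executes Step 6; the previous paragraph gives $z_i^{k_0}\in\mathcal{F}_i$. For any $k>k_0$ the three admissible branches all preserve membership in $\mathcal{F}_i$: a renewed pass of the feasibility test re-assigns $z_i^k$ to a freshly certified feasible $x_i^k$, while both the feasibility-cut branch and the solvability-cut branch copy $z_i^{k-1}$, which lies in $\mathcal{F}_i$ by the inductive hypothesis. Thus $z_i^k\in\mathcal{F}_i$ for all $k\ge k_0$, and in particular for the value output at termination.

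Finally I would verify that termination cannot occur before every agent has reached such a $k_0$, so that the reported output is a genuine feasible point and not the dummy initializer with $\Vert z_i^0\Vert=+\infty$. Here I would invoke Proposition \ref{dcsc_ftt_proposition}: the stopping test (\ref{stopping_criteria_2}) is evaluated only inside the branch $g_i^{\max}(x_i^k)\le 0$, and the consensus condition (\ref{stopping_criteria_2(a)}), $\Vert z_i^k-z_j^k\Vert\le\epsilon_4$, cannot be met while any agent still carries $\Vert z_j^0\Vert=+\infty$. Consequently, at the terminating iteration every agent has already updated its candidate through Step 6 at least once, so $z_i^k\in\mathcal{F}_i$ for all $i\in\mathcal{V}$. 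I expect the only delicate point to be the bookkeeping around the global-optimality requirement for $(\ref{LLP_i})$: the entire argument collapses if the inner maximization returns merely a local maximizer, so the proof must rest squarely on the global-solver guarantee of Step 4 rather than on any concavity of $g_i$ in $y_i$, which is not assumed.
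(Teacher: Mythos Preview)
Your proposal is correct and rests on the same key observation as the paper: because $(\ref{LLP_i})$ is solved to \emph{global} optimality, the branch condition $g_i^{\max}(x_i^k)\le 0$ certifies $x_i^k\in\mathcal{F}_i$, so every assignment $z_i^k=x_i^k$ in Step~6 yields a locally feasible point. The paper's own proof is a one-line remark to this effect; your additional induction and termination bookkeeping is sound but goes well beyond what the authors record.
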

\begin{proof}
 In Algorithm \ref{alg:DRCP}, since $({\rm LLP}_i)$ is solved at optimality, it can be obtained that for any $i\in\mathcal{V}$, each term of the sequence $\{z_i^{k+1}\}_k$ either belongs to the set $\mathcal{F}_i$ or satisfies $\Vert z_i^{k+1}\Vert=+{\rm Inf}$. By Proposition \ref{dcsc_ftt_proposition}, we have that Algorithm \ref{alg:DRCP} terminates finitely, and $\Vert z_i^{k+1}\Vert<+{\rm Inf}$ for all $i\in\mathcal{V}$. Therefore, when Algorithm 2 terminates, the obtained point $z_i^{k+1}$ satisfies $z_i^{k+1}\in\mathcal{F}_i,\ \forall i\in\mathcal{V}$.
\end{proof}
\vspace{2mm}
\begin{remark}
	In solving the (\ref{ADRCP}), Algorithm 1 cannot provide an exact optimal solution that simultaneously satisfies the constraints of all the agents in finite time, but can only find a solution that satisfies its own constraints of each agent in finite time. Thus, the local feasibility of the solutions to the (\ref{ADRCP}) leads directly to the fact that the proposed distributed cutting-surface consensus algorithm does not return in finite time solutions that are feasible for the simultaneous satisfaction of semi-infinite constraints for all agents.
\end{remark}
\vspace{2mm}
\begin{theorem}
For each agent $i\in\mathcal{V}$, take any initial restriction parameter $\epsilon_i^0>0$, any initial finite or empty set $Y^0_i\subset Y_i$, and any reduction parameter $r> 1$. Under Assumptions \ref{Assumption 1}, \ref{Assumption 2}, \ref{Assumption wr}--\ref{Assumption 5}, the distributed cutting-surface consensus algorithm terminates in a finite number of iterations and generates locally feasible solutions for all agents satisfying consensus and zeroth-order stopping conditions to certain tolerances.
\end{theorem}
\begin{proof}
	This result follows from Propositions \ref{dcsc_ftt_proposition} and  \ref{dcsc_lf_proposition}.	
\end{proof}
\section{Numerical case studies}
In this section, a numerical case is presented to validate the effectiveness of the cutting-surface consensus algorithm (i.e., Algorithm \ref{alg:DRCP}). Firstly, we analyze and verify the finite-time termination of Algorithm \ref{alg:DRCP} and confirm the effectiveness of the distributed termination algorithm developed in this paper by comparing it with the termination algorithm in \cite{xie2017stop}. Then, we verify the local feasibility of the solutions obtained by Algorithm \ref{alg:DRCP} and compare it with the state-of-the-art methods in \cite{burger2013polyhedral,burger2012distributed}. Finally, we analyze the impacts of the restriction parameter $\epsilon^0_i$ and the reduction parameter $r$ on Algorithm \ref{alg:DRCP}. The implementation is carried out in MATLAB Version 9.5.0.944444 (R2018b, win64) and runs on an Intel(R) Core (TM) i7-7700HQ CPU @ 2.80GHz, 256GB terminal server. 
\begin{figure}[!t]
	\centerline{\includegraphics[height=2cm]{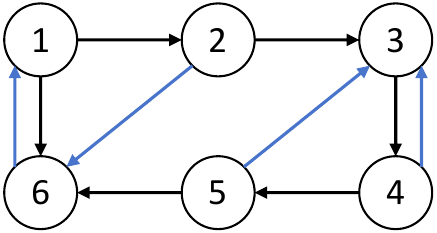}}
	\caption{Network Topology: time-varying unbalanced directed graphs, where the black and blue arrows denote the communication among agents at time $t$ and $t+1$. $\mathcal{G}(t:t+1)$ is strongly connected.}
	\label{fig2}	
\end{figure}
\par
Consider a distributed system comprised of six agents. The network topology of this system consists of time-varying unbalanced directed graphs, as shown in Figure \ref{fig2}. The optimization problem is formulated as follows. 
\begin{flalign}\label{npf_1}
	&\mathop{\min}\limits_{x\in X}  F(x)=\sum^m_{i=1}\Vert x-q_i\Vert^2
	\\&\ \text{s.t.}\ g_i(x,y_i)=(x(1)-p_i)^2+2y_ix(2)-{y_i}^2\!-\!1\!\le\! 0
	\nonumber
	\\&\qquad \qquad \qquad \qquad \qquad \qquad \ \ \forall y_i\in[-1,1]\quad i\in\mathcal{V},
	\nonumber
\end{flalign}
where $x=[x(1),x(2)]^{\mathrm{T}}$, $X=\left\{x\in\mathbb{R}^2|-2\le x(1)\le2,\right.$ $\left. -1\le  x(2)\le1\right\}$, $q_i\in \mathbb{R}^2$ and $p_i\in\mathbb{R}$ are agent $i$'s parameters for the local cost function and local constraint, respectively (see Table \ref{Table1} for specific values).
\setlength{\tabcolsep}{1mm}{
	\begin{table}[!t]
		\caption{Parameters of The Numerical Problem}
		\label{Table1}
		\centering 
		\begin{tabular}{ccccccc} 
			\toprule 
			Agent & Agent 1 & Agent 2 & Agent 3 &
			Agent 4 & Agent 5 & Agent 6  \\ 
			\midrule 
			$q_i$ & [0,6] & [0,0] & [1,1]& [-1,-1]& [1,-1]& [-1,1] \\  
			$p_i$ & -0.75 & -0.5 & -0.25 & 0.25 & 0.5 & 0.75 \\		
			\bottomrule 
		\end{tabular} 
	\end{table}
}
\subsection{Analysis of Finite-Time Termination}
\label{sec:Analysis of Finite-Time Convergence}
\begin{figure}[!t]
	\centerline{\includegraphics[width=2.8 in]{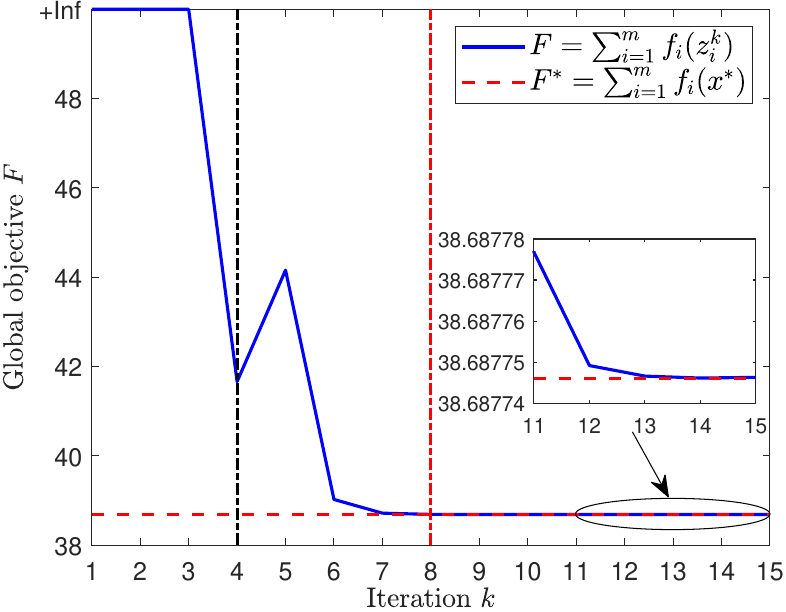}}
	\caption{Iterative process of Algorithm \ref{alg:DRCP}. The blue solid line indicates the evolution of $F=\sum^m_{i=1}f_i(z^k_i)$ with respect to iteration $k$. The red dashed line presents the global optimal value of the problem (\ref{npf_1}), where $x^*=[0,\sqrt{7}/4]^\top$, $F^*\approx 38.687746$. The horizontal coordinates of the black and red dotted-dashed lines represent the number of iterations required by the finite-time consensus termination algorithm in \cite{xie2017stop} and the termination algorithm developed in this paper, respectively.}
	\label{fig3}	
\end{figure}
For Algorithm \ref{alg:DRCP}, we initialize the restriction parameter $\epsilon^0_i\!=\!100$ and the initial set $Y^0_i\!=\!\emptyset$ for each $i\in\mathcal{V}$, along with the reduction parameter $r\!=\!10$. For Algorithm \ref{alg:3}, we set the stepsize as $\alpha(t)=\frac{1}{\sqrt{t}}$, and the termination parameters as $\epsilon_1\!=\!10^{-2}$, $\epsilon_2\!=\!10^{-6}$, $\epsilon_3\!=\!10^{-6}$. For Algorithm \ref{alg:2}, we set the termination parameters as $\epsilon_4\!=\!0.1$, $\epsilon_5\!=\!0.1$ and $\epsilon_6\!=\!0.1$. In addition, in the execution of Algorithm 1, the Euclidean projection point for each agent is obtained by solving the sub-optimization problem introduced in Remark 4, using the fmincon function in MATLAB.
\par
For this example, the distributed cutting-surface consensus algorithm terminates after eight iterations. Figure \ref{fig3} illustrates the global objective value $F\!=\!\sum^m_{i=1}f_i(z^k_i)$ at different iterations of Algorithm \ref{alg:DRCP}, from which it can be seen that the global objective $F$ converges to the optimal value $F^*$ of the problem (\ref{npf_1}) as the iterations progress. Note that in the first three iterations of Algorithm \ref{alg:DRCP}, since there exists at least an agent $i\in\mathcal{V}$ that fails to execute Step 13 in Algorithm \ref{alg:DRCP}, its candidate point $z^k_i$ satisfies $\Vert z^k_i\Vert\!=\!\Vert z^0_i\Vert\!=\!+\infty$. Therefore, we have $\sum^m_{i=1}f_i(z^k_i)=+\infty$.
\par 
In addition, we discuss the iterative process of the DPG algorithm when the local constraints $\mathcal{F}^k_i$ of all agents in the (${\rm ADRCP^k}$) are nonempty. Figure \ref{fig4} presents that the DPG algorithm reaches convergence when the (${\rm ADRCP^k}$) is feasible, i.e. $\mathcal{F}^k=\bigcap^m_{i=1}\mathcal{F}^k_i\neq\emptyset$. Figure \ref{fig5} illustrates that the DPG algorithm fails to reach consensus when the (${\rm ADRCP^k}$) is infeasible, i.e. $\mathcal{F}^k=\emptyset$. As described in Section \ref{sec:Algorithm Development and Analysis}, to ensure finite-time termination of Algorithm \ref{alg:DRCP}, we set the maximum iteration time $T=10^7$ to force the termination of the DPG algorithm for this situation.
\begin{figure}[!t]
	\centerline{\includegraphics[width=2.7 in]{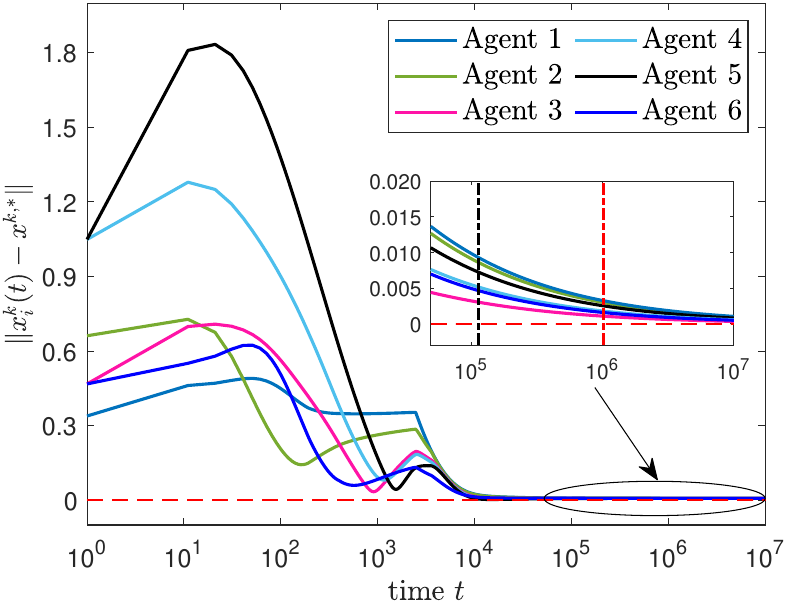}}
	\caption{Iterative process of the DPG algorithm, when the (${\rm ADRCP^k}$) is feasible ($\epsilon^k_i=0.1$ and $Y^k_i=[1]$ for all agents). The six solid lines indicates the evolution of $\Vert x^k_i(t)-x^{k,*}\Vert$ with respect to $t$, where $x^k_i(t)$ is agent $i$'s estimate generated by DPG algorithm at time $t$. The red dashed line presents the value $\Vert x^k_i(t)-x^{k,*}\Vert=0$. The horizontal coordinates of the black and red dotted-dashed lines represent the number of iterations required by the finite-time consensus termination algorithm in \cite{xie2017stop} and the proposed termination algorithm, respectively. The zoomed-in part shows our optimality gap is smaller than that achieved by \cite{xie2017stop}.}
	\label{fig4}	
\end{figure}
\begin{figure}[!t]
	\centerline{\includegraphics[width=2.7 in]{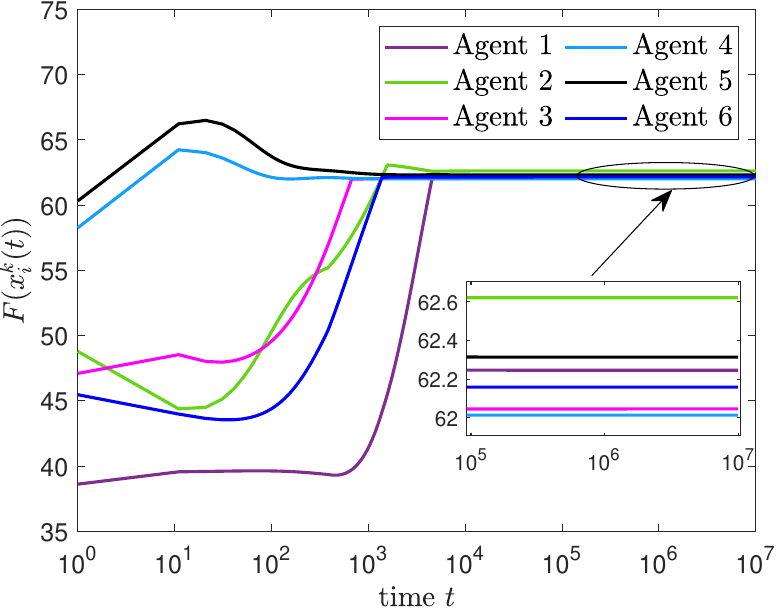}}
	\caption{Iterative process of the DPG algorithm, when the (${\rm ADRCP^k}$) is infeasible ($\epsilon^k_i=5$ and $Y^k_i=[1]$ for all agents). The six solid lines indicates the evolution of $F(x^k_i(t))$ with respect to $t$, where $x^k_i(t)$ denotes the agent $i$'s estimate generated by DPG algorithm at time $t$.}
	\label{fig5}	
\end{figure}
\par
Moreover, we compare our termination algorithm with the finite-time consensus algorithm in \cite{xie2017stop}.
In solving the (DRCP), the finite-time consensus algorithm achieves termination in the fourth iteration of Algorithm \ref{alg:DRCP}, as shown in Figure \ref{fig3}. However, the resulting solution is not the approximate optimal solution for the (DRCP). In contrast, our distributed termination algorithm not only ensures the consensus of the solutions but also guarantees their approximate optimality upon termination of Algorithm \ref{alg:DRCP}. Moreover,
when solving the (${\rm ADRCP^k}$), the optimality gap of our solution is smaller than that achieved by the finite-time consensus algorithm in \cite{xie2017stop} under the same consensus accuracy $\epsilon_1$. This stems from the introduction of the zeroth-order stopping conditions (\ref{stopping_criterion1_b}) and (\ref{stopping_criterion1_c}), enabling the DPG algorithm to generate a more approximate optimal solution for each agent, as shown in Figure \ref{fig4}.
\subsection{Analysis of Local Feasibility of The Solutions}
\label{sec:Analysis of Local Feasibility of The Solutions}
\begin{figure}[!t]
	\centerline{\includegraphics[width=2.7 in]{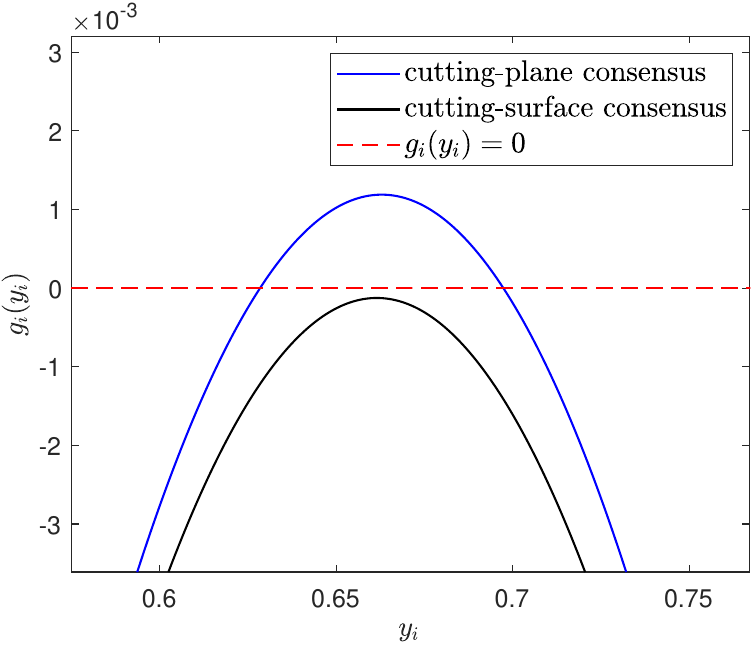}}
	\caption{Comparison of cutting-surface consensus algorithm (i.e., Algorithm \ref{alg:DRCP}) with the cutting-plane consensus algorithm in \cite{burger2012distributed,burger2013polyhedral} in terms of solution feasibility of the agent 1. The blue curve indicates the profile of $g_1(\hat z_1^k,y_1)$ regarding the uncertainty parameter $y_1$, where $\hat z_1^k$ is the resulting solution of the cutting-plane consensus algorithm. Similarly, the black curve is the profile obtained according to the cutting-surface consensus approach.}
	\label{fig6}
\end{figure}
\setlength{\tabcolsep}{1mm}{
	\begin{table}[!t]
		\caption{Local Feasibility of the solution}
		\label{Table2}
		\centering 
		\begin{tabular}{ccccccc} 
			\toprule 
			Agent & Agent 1 & Agent 2 & Agent 3 &
			Agent 4 & Agent 5 & Agent 6  \\ 
			\midrule 
			cutting-surface & \checkmark & \checkmark & \checkmark&\checkmark&\checkmark& \checkmark \\  
			cutting-plane & $\times$ &\checkmark & \checkmark& \checkmark &\checkmark & $\times$ \\		
			\bottomrule 
		\end{tabular} 
	\end{table}
} 
In this subsection, we compare Algorithm \ref{alg:DRCP} with the cutting-plane consensus algorithms in \cite{burger2012distributed,burger2013polyhedral} to present the superiority of our algorithm in terms of feasibility assurance of the resulting solutions. The cutting-plane consensus algorithms are based on successively adding linear cutting-planes into the finite set of constraints to externally approximate the ({\ref{DRCP}}), which fails to ensure the feasibility of the resulting solutions. By comparison, Algorithm \ref{alg:DRCP} is based on iteratively approximating the (\ref{DRCP}) by successively reducing the restriction parameters $\epsilon^k_i$ of the right-hand constraints and adding convex, possibly nonlinear cutting-surfaces into the existing finite sets of constraints, which enables each agent to obtain a local feasible solution within a finite number of iterations. This result is illustrated by Figure \ref{fig6} and Table \ref{Table2}.   
\subsection{Computational Performance of Adjusting $\epsilon^0_i$ and $r$}
Similar to \cite{Mitsos} and \cite{FU2015184}, the parameters $\epsilon^0_i$ and $r$ have a great influence on the computational performance of Algorithm \ref{alg:DRCP}. In this optimization problem, we set all the $\epsilon^0_i$ to equal values and $Y^0_i$ to the empty set. 
\begin{figure}[!t]
	\centerline{\includegraphics[width=2.7 in]{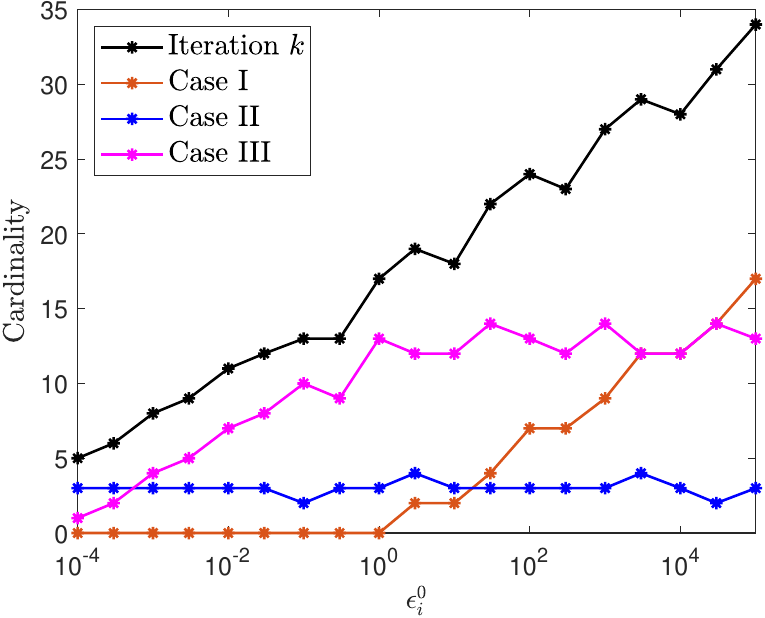}}
	\caption{Effect of tuning parameter $\epsilon^0_i$. The reduction parameter is $r\!=\!2$.}
	\label{fig7}	
\end{figure}
\begin{figure}[!t]
	\centerline{\includegraphics[width=2.7 in]{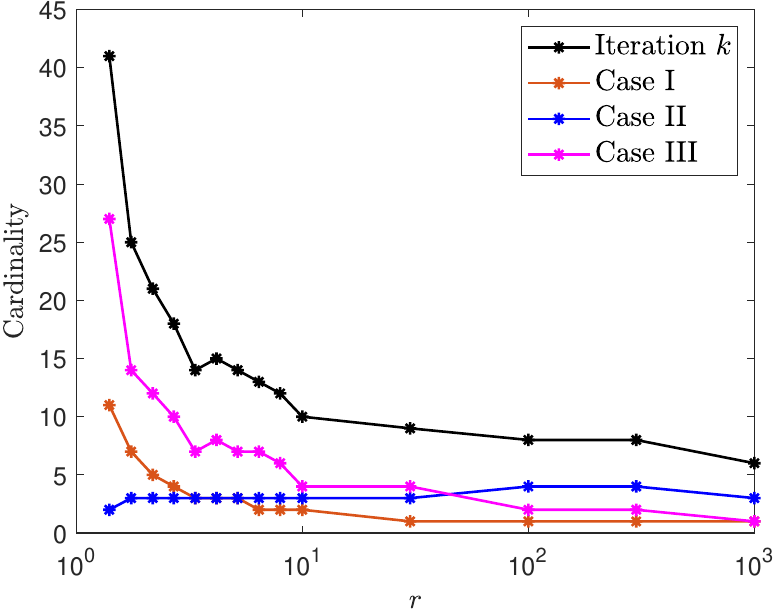}}
	\caption{Effect of tuning parameter $r$. The initial restriction parameter is $\epsilon^0_i\!=\!100$ for any $i\in\mathcal{V}$.}
	\label{fig8}	
\end{figure}
\par
 For a fixed reduction parameter $r=2$, the effect of tuning parameter $\epsilon^0_i$ is shown in Figure \ref{fig7}, where  $10^{-4}<\epsilon^0_i<10^5$. When the parameter $\epsilon^0_i$ is small ($\epsilon^0_i\le 1$), the number of Case I remains zero as the (${\rm ADRCP^k}$) is feasible throughout the iterations of Algrithm \ref{alg:DRCP}. In addition, the number of Case III sees a steady increase as $\epsilon^0_i$ rises in this stage. Conversely, when the parameter $\epsilon^0_i$ is large ($\epsilon^0_i>1$), the (${\rm ADRCP^k}$) becomes unfeasible in the initial iterations of Algorithm \ref{alg:DRCP}, leading to a growth in the number of Case I with $\epsilon^0_i$. However, the number of Case III remains basically unchanged in this stage. Moreover, the number of Case II is basically unchanged regardless of the value of $\epsilon^0_i$. To sum up, as $\epsilon^0_i$ rises, the number of iterations of Algorithm \ref{alg:DRCP} increases due to the increase in Case III (for small $\epsilon^0_i$) and Case I (for large $\epsilon^0_i$).
\par
For a fixed restriction parameter $\epsilon^0_i=100$, the effect of tuning parameter $r$ is shown in Figure \ref{fig8}, where $1<r\le 10^3$. When $r$ is small ($r<10$), increasing $r$ leads to a sharp decrease in the quantities of Case III and Case I. Conversely, for larger values of $r$ ($r>10$), the number of Case III and Case I decreases gradually and eventually stabilizes. Regardless of the value of $r$, the number of Case II remains virtually unchanged. In summary, as $r$ increases, the number of iterations of Algorithm \ref{alg:DRCP} decreases along with the reduction in the quantities of Case III and Case I.
\vspace{2mm}
\begin{remark}
	 There is a limit to the computational accuracy of MATLAB. Therefore, we cannot set $\epsilon^0_i$ too small ($\epsilon^0_i<10^{-7}$), or $r$ too large ($r>10^{3}$), as this would lead to an infinite number of Case II and iterations, thus preventing Algorithm \ref{alg:DRCP} from terminating within a finite number of iterations.
\end{remark}
\section{Conclusion}
 A distributed cutting-surface consensus approach is proposed for solving the distributed robust convex optimization problems of multi-agent systems with guaranteed local feasibility and finite-time termination, which is applicable to time-varying unbalanced directed graphs under the assumption of uniformly jointly strong connectivity. Then, it is proved that this approach enables each agent to locate a local feasible solution satisfying consensus and zeroth-order stopping conditions to certain tolerances within a finite number of iterations. Moreover, we use a numerical example to illustrate the effectiveness of the approach.
 \par
 Direct extensions may lie in the following four aspects. Firstly, in the current cutting-surface consensus approach, we use the distributed projected gradient (DPG) algorithm to solve the approximate problem (${\rm ADRCP^k}$), where the convergence rate of the DPG algorithm is sublinear. Future research could consider designing optimization algorithms with faster convergence rates (such as linear or exponential convergence) to replace the DPG algorithm, thereby improving the efficiency of the cutting-surface consensus algorithm. Secondly, the stopping criterion of this paper is not sufficient for finite-time convergence, it is sufficient for finite-time termination and enabling each agent to obtain an approximate optimal solution that satisfies consensus and zeroth-order stopping conditions to certain tolerances. Ongoing work will consider propose a novel distributed robust convex optimization algorithm that finite-time converges to a feasible solution with guaranteed sub-optimality level. In addition, it would be interesting to conduct a thorough theoretical and computational analysis of convergence rate of the distributed cutting-surface consensus algorithm. Finally, this paper mainly considers the (DRCP) with only one semi-infinite constraint for each agent. Future work will consider a more complex case that (DRCP) includes multiple semi-infinite constraints for each agent.
\section*{Acknowledgments}
We are grateful to the associate editor and the anonymous reviewers whose comments lead to substantial improvement of this article, especially Reviewers 2 and 3.
\section*{Appendix}
\label{Appendix}
In this Appendix, referring to the proof idea of \cite{mai2019}, we prove the convergence and convergence rate of the DPG algorithm proposed in Section \ref{sec:Approximation Problem}.
\subsection{Proof of Theorem 1}

\begin{lemma}[\cite{xie2018distributed} Lemma 2]\label{weighted rule lemma}
	Let $\left\{\mathcal{G}(t)\right\}$ and $\left\{A(t)\right\}$ satisfy Assumptions \ref{Assumption 2}, \ref{Assumption wr}. For any $s\ge t$, define $A(s:t)=\prod_{k=t}^{s-1}A(k)$ where $A(t:t)=I$ and $a_{ij}(s:t)$ denotes the entries of $A(s:t)$. For any $t\ge 0$, there exists a normalized vector $\pi(t)$, i.e., $1_m^\top \pi(t)=1$, such that
	\begin{itemize}
		\item[(a)]there exist $B>0$ and $0<\lambda<1$ such that $\vert a_{ij}(s:t)-\pi_j(t)\vert\le B\lambda^{s-t}$ for any $i,j\in\mathcal{V}$ and $s\ge t$.
		\item[(b)] there is a constant $\eta\ge\gamma^{(m-1)S}$ such that $\pi_i(t)\ge\eta$ for any $i\in\mathcal{V}$ and $t\ge 0$.
		\item[(c)] $(\pi(t))^\top=(\pi(t+1))^\top A(t)$.
	\end{itemize}
\end{lemma}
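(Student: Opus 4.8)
The plan is to regard $\{A(s:t)\}_{s\ge t}$ as backward products of row-stochastic matrices and to drive the conclusion from weak ergodicity, specializing the quantitative constants to the UJSC structure. First I would record the elementary facts that each $A(s:t)$ is again row-stochastic (a product of row-stochastic matrices), and that left multiplication by a row-stochastic matrix keeps every column vector inside the convex hull of its own entries. Concretely, writing $c_j(s):=A(s:t)e_j$, the recursion $c_j(s)=A(s-1)c_j(s-1)$ shows that $M_j(s):=\max_i a_{ij}(s:t)$ is nonincreasing in $s$ and $m_j(s):=\min_i a_{ij}(s:t)$ is nondecreasing, so both converge; the whole task is then to show the gap $M_j(s)-m_j(s)$ closes geometrically and to identify the common limit with $\pi_j(t)$.

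The technical heart is a block-positivity estimate: under Assumption \ref{Assumption 2} together with the self-loops and the uniform lower bound $\gamma$ of Assumption \ref{Assumption wr}, I would show that $a_{ij}(s:t)\ge\gamma^{(m-1)S}$ for all $i,j\in\mathcal{V}$ whenever $s-t\ge(m-1)S$. The argument tracks, for each source $j$, the set of nodes reachable from $j$ by time-respecting walks: the self-loops guarantee this set never shrinks, while strong connectivity of the union graph over each length-$S$ window forces it to gain at least one node per window until it exhausts $\mathcal{V}$. After at most $m-1$ such windows the set is all of $\mathcal{V}$, and padding any walk with self-loops yields a time-respecting walk of length exactly $(m-1)S$ from $j$ to any $i$; since every traversed edge (self-loops included) carries weight at least $\gamma$, that single walk already contributes $\gamma^{(m-1)S}$ to the corresponding product entry. (This is consistent with row-stochasticity precisely because Assumption \ref{Assumption wr} forces $m\gamma^{(m-1)S}\le1$.)

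With block positivity in hand I would bound Dobrushin's ergodic coefficient of each length-$(m-1)S$ block by $\rho:=1-m\gamma^{(m-1)S}<1$, using $\tau(P)=1-\min_{i,k}\sum_j\min(p_{ij},p_{kj})$ and $\sum_j\min(p_{ij},p_{kj})\ge m\gamma^{(m-1)S}$. Submultiplicativity $\tau(PQ)\le\tau(P)\tau(Q)$ then gives $\tau(A(s:t))\le B\lambda^{s-t}$ for suitable $B>0$ and $\lambda=\rho^{1/((m-1)S)}\in(0,1)$. Since $M_j(s)-m_j(s)\le 2\tau(A(s:t))$, the monotone limits of $M_j(s)$ and $m_j(s)$ coincide; I define $\pi_j(t)$ as this common value, and the squeeze $a_{ij}(s:t),\pi_j(t)\in[m_j(s),M_j(s)]$ delivers claim (a) after absorbing the factor into $B$. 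That $\pi(t)$ is a probability vector follows by passing to the limit in $\sum_j a_{ij}(s:t)=1$. Claim (c) comes from letting $s\to\infty$ in $A(s:t)=A(s:t+1)A(t)$, since the left side tends to $\mathbf{1}_m\pi(t)^\top$ and the right side to $\mathbf{1}_m\bigl(\pi(t+1)^\top A(t)\bigr)$. Claim (b) is read off directly, because $\pi_i(t)=\lim_s m_i(s)\ge\gamma^{(m-1)S}$, so $\eta=\gamma^{(m-1)S}$ works.

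The step I expect to be the main obstacle is the block-positivity estimate: converting the qualitative UJSC connectivity into the explicit uniform lower bound $\gamma^{(m-1)S}$ requires the careful reachable-set growth argument in the time-respecting (rather than union-graph) sense, and it is exactly this quantitative bound that both seeds the strict contraction $\rho<1$ and supplies the lower bound $\eta$, so the remaining parts reduce to routine convergence bookkeeping once it is secured.
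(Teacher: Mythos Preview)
The paper does not prove this lemma: it is quoted verbatim from \cite{xie2018distributed} (Lemma~2 there) and simply invoked. So there is no ``paper's own proof'' to compare against; your proposal supplies what the paper outsources.

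Your argument is the standard weak-ergodicity route and is correct. The reachable-set growth under UJSC with self-loops gives the block-positivity bound $a_{ij}(s:t)\ge\gamma^{(m-1)S}$ for $s-t\ge(m-1)S$; this feeds the Dobrushin coefficient bound $\tau\le 1-m\gamma^{(m-1)S}$, and submultiplicativity of $\tau$ then yields the geometric rate in~(a). Your identifications of $\pi(t)$ as the common column limit, of (c) via $A(s:t)=A(s:t+1)A(t)$, and of (b) from the block-positivity lower bound are all sound. Two minor tightenings: the oscillation bound is actually $M_j(s)-m_j(s)\le\tau(A(s:t))$ (no factor~$2$ needed), since the initial column $e_j$ has oscillation~$1$; and $\rho=1-m\gamma^{(m-1)S}$ could in principle be~$0$ (when each block equals $\tfrac{1}{m}\mathbf{1}\mathbf{1}^\top$), which only helps. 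Neither affects the conclusion.
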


\begin{lemma}[\cite{Hoffmann1992} Corollary 2]\label{regularity lemma}
	Let $U$ be the convex hull of $\cup_{i\in\mathcal{V}} \Omega_i$, i.e. $U={\rm conv}(\cup_{i\in\mathcal{V}}\Omega_i)$. Suppose that Assumption \ref{Assumption ip} holds and $\Omega_i$ is a compact set, then there exists a constant $R\ge 1$ such that
	$${\rm dist}(x,\cap_{i\in\mathcal{V}} \Omega_i)\le R \max_{i\in\mathcal{V}} {\rm dist}(x,\Omega_i),\quad \forall x\in U.$$
\end{lemma}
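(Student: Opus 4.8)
The plan is to read Lemma~\ref{regularity lemma} as a \emph{bounded linear regularity} (Hoffman-type error bound) estimate for the finite family of compact convex sets $\{\Omega_i\}_{i\in\mathcal{V}}$, and to produce the constant $R$ \emph{constructively} by shrinking an arbitrary point of $U$ toward an interior point of the intersection. First I would extract a common Slater ball: Assumption~\ref{Assumption ip} supplies a point with strict inequalities in the $g_i$'s, and since the coordinate $u$ is free one may enlarge it so that the epigraph constraints $f_i(\theta)<0$ also hold strictly; continuity of $f_i,g_i$ together with finiteness of $Y_i^k$ and compactness then yield a single radius $\rho>0$ with $B(\tilde\theta,\rho):=\{z:\Vert z-\tilde\theta\Vert\le\rho\}\subseteq\Omega_i$ for \emph{every} $i\in\mathcal{V}$. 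Compactness of each $\Omega_i$ makes $U={\rm conv}(\bigcup_{i}\Omega_i)$ compact, so $\Delta:={\rm diam}(U)<\infty$.

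The core is a convex-combination estimate. Fix $\theta\in U$, set $d:=\max_{i\in\mathcal{V}}{\rm dist}(\theta,\Omega_i)$ and $p_i:=P_{\Omega_i}[\theta]$, so $\Vert\theta-p_i\Vert\le d$. For $\lambda\in[0,1]$ let $\theta_\lambda:=(1-\lambda)\theta+\lambda\tilde\theta$. By convexity of $\Omega_i$ and $B(\tilde\theta,\rho)\subseteq\Omega_i$, the ball of radius $\lambda\rho$ centered at $(1-\lambda)p_i+\lambda\tilde\theta$ lies in $\Omega_i$; since $\Vert\theta_\lambda-[(1-\lambda)p_i+\lambda\tilde\theta]\Vert=(1-\lambda)\Vert\theta-p_i\Vert\le d$, the choice $\lambda\rho\ge d$ forces $\theta_\lambda\in\Omega_i$ for all $i$, hence $\theta_\lambda\in\bigcap_i\Omega_i$. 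Taking $\lambda=\min\{1,d/\rho\}$ gives
\[
{\rm dist}\Big(\theta,\bigcap_{i\in\mathcal{V}}\Omega_i\Big)\le\Vert\theta-\theta_\lambda\Vert=\lambda\Vert\theta-\tilde\theta\Vert\le\lambda\Delta\le\frac{\Delta}{\rho}\,d
\]
when $d\le\rho$, while for $d>\rho$ the trivial bound ${\rm dist}(\theta,\bigcap_i\Omega_i)\le\Delta\le(\Delta/\rho)d$ applies; thus $R:=\max\{1,\Delta/\rho\}\ge1$ works uniformly over $\theta\in U$.

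The step I expect to be the main obstacle is obtaining the \emph{uniform} Slater radius $\rho$ valid for all $i$ simultaneously, in particular reconciling the ball $B(\tilde\theta,\rho)$ with the ambient set $\Theta=X\times\mathbb{R}^m$: since $X$ is only compact convex, $\tilde x$ need not lie in its topological interior, so the clean argument should be run in the relative interior of $U$ (or relative to ${\rm aff}(U)$), replacing $B$ by a relative ball. Handling this relative-interior bookkeeping, and verifying that the pointwise strict feasibility in Assumption~\ref{Assumption ip} upgrades to a single margin $\rho$ via compactness, are the only delicate points; the convex-combination estimate itself is elementary. Since the statement is quoted as Corollary~2 of \cite{Hoffmann1992}, an alternative is simply to check its hypotheses (finitely many closed convex sets with a common relative-interior point), but the constructive route has the merit of exhibiting $R$ explicitly in terms of $\rho$ and ${\rm diam}(U)$.
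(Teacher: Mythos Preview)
The paper does not prove this lemma at all: it is stated with a bare citation to Corollary~2 of \cite{Hoffmann1992} and used as a black box in the convergence analysis. So there is no ``paper's own proof'' to compare against; your proposal goes strictly further by supplying an argument.

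Your constructive route is essentially the standard proof that a Slater point yields bounded linear regularity, and the core convex-combination estimate is correct. One minor tightening: from $\Vert\theta_\lambda-[(1-\lambda)p_i+\lambda\tilde\theta]\Vert=(1-\lambda)\Vert\theta-p_i\Vert$ you only need $(1-\lambda)d\le\lambda\rho$, i.e.\ $\lambda\ge d/(d+\rho)$, which gives a slightly better constant than your $\lambda\ge d/\rho$; either choice yields the stated $R=\max\{1,\Delta/\rho\}$. You correctly flag the relative-interior subtlety, and in fact there is a second, related issue worth noting: in the paper's application the sets $\Omega_i\subset X\times\mathbb{R}^m$ are \emph{not} compact (the $u$-coordinate is unbounded above by the epigraph constraint $f_i(x)\le e_i^\top u$), so the hypothesis ``$\Omega_i$ is a compact set'' in the lemma statement is not literally met where the lemma is invoked. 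Your proof only uses compactness to get $\Delta={\rm diam}(U)<\infty$, so if you want the argument to serve the paper's purposes you should either intersect with a large box in $u$ (which leaves the algorithm's iterates untouched once initialized there) or observe that the Hoffmann result requires only boundedness of the region over which the estimate is claimed, not of the $\Omega_i$ themselves.
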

\begin{lemma}[\cite{mai2019} Lemma 4]\label{convergence_lemma}
	Let $\left\{v(t)\right\}$, $\left\{b(t)\right\}$, $\left\{u(t)\right\}$ and $\left\{e(t)\right\}$ be nonnegative sequences, where $\sum_{t=0}^{\infty}b(t)<\infty$, $\sum_{t=0}^{\infty}e(t)<\infty$ and
	\begin{equation}\label{119}
		v(t+1)\le(1+b(t))v(t)-u(t)+e(t),\quad\forall t\ge 0.
	\end{equation}
	Then, the sequence $\left\{v(t)\right\}$ is convergent, and $\sum_{t=0}^{\infty}u(t)<\infty$.
\end{lemma}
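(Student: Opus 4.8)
The statement is a deterministic version of the Robbins--Siegmund almost-supermartingale convergence lemma, and my plan is to eliminate the multiplicative factor $(1+b(t))$ by renormalizing the recursion, turning it into a purely additive ``almost decreasing'' inequality. First I would define the partial products $P(t)=\prod_{s=0}^{t-1}(1+b(s))$ with $P(0)=1$. Since every factor satisfies $1+b(s)\ge 1$, the sequence $\{P(t)\}$ is nondecreasing, and the elementary bound $\prod_{s=0}^{t-1}(1+b(s))\le \exp\big(\sum_{s=0}^{t-1}b(s)\big)$ together with $\sum_{t=0}^{\infty}b(t)<\infty$ shows that $P(t)$ increases to a finite limit $P_\infty\in[1,\infty)$.

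Next I would divide the hypothesized recursion (\ref{119}) through by $P(t+1)=(1+b(t))P(t)$. Writing $w(t)=v(t)/P(t)$, $\tilde u(t)=u(t)/P(t+1)$ and $\tilde e(t)=e(t)/P(t+1)$, the factor $(1+b(t))$ cancels exactly and the inequality collapses to
\[
w(t+1)\le w(t)-\tilde u(t)+\tilde e(t),\quad \forall t\ge 0,
\]
where all three new sequences are again nonnegative. Because $P(t+1)\ge 1$ we have $\tilde e(t)\le e(t)$, so that $\sum_{t=0}^{\infty}\tilde e(t)\le\sum_{t=0}^{\infty}e(t)<\infty$.

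From this additive inequality two facts follow. Telescoping from $0$ to $T-1$ and using $w(T)\ge 0$ gives $\sum_{t=0}^{T-1}\tilde u(t)\le w(0)+\sum_{t=0}^{\infty}\tilde e(t)<\infty$, hence $\sum_{t=0}^{\infty}\tilde u(t)<\infty$. For the convergence of $\{w(t)\}$ I would introduce the auxiliary sequence $\xi(t)=w(t)+\sum_{s=t}^{\infty}\tilde e(s)$; substituting the additive inequality yields $\xi(t+1)\le\xi(t)-\tilde u(t)\le\xi(t)$, so $\{\xi(t)\}$ is nonincreasing and bounded below by $0$, and therefore converges. Since the error tail $\sum_{s=t}^{\infty}\tilde e(s)\to 0$, the sequence $w(t)=\xi(t)-\sum_{s=t}^{\infty}\tilde e(s)$ converges to the same limit.

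Finally I would translate back to the original quantities: $v(t)=P(t)w(t)$ is a product of two convergent sequences, so $\{v(t)\}$ converges; and since $P(t+1)\le P_\infty$, we have $u(t)=P(t+1)\tilde u(t)\le P_\infty\,\tilde u(t)$, whence $\sum_{t=0}^{\infty}u(t)\le P_\infty\sum_{t=0}^{\infty}\tilde u(t)<\infty$. The main obstacle is obtaining \emph{genuine} convergence of $\{w(t)\}$ rather than mere boundedness: the plain telescoped inequality only controls the summability of the $\tilde u(t)$, and it is the monotone auxiliary sequence $\xi(t)$ --- which absorbs the error tail $\sum_{s\ge t}\tilde e(s)$ --- that upgrades this into convergence.
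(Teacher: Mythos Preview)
Your argument is correct and is in fact the standard proof of this deterministic Robbins--Siegmund lemma. Note, however, that the paper does not give its own proof of this statement: it is quoted verbatim as Lemma~4 of \cite{mai2019} and used as a black box in the proof of Theorem~1, so there is no ``paper's proof'' to compare against. Your renormalization by the partial products $P(t)$, the telescoping bound for $\sum\tilde u(t)$, and the monotone auxiliary sequence $\xi(t)=w(t)+\sum_{s\ge t}\tilde e(s)$ to upgrade boundedness to convergence are exactly the ingredients one finds in the original references, and every step checks out.
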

\begin{proposition}\label{subproblem_important_result_1}
	Let Assumptions \ref{Assumption 2}--\ref{Assumption wr} be satisfied. Then for the algorithm (\ref{alg _dpg}), the following holds for any $v\in \Omega$ and $t\ge 0$:
	\begin{equation}\label{102}
		\begin{aligned}
			&\sum_{i=1}^{m}\pi_i(t+1){\Vert \theta_i(t+1)-v\Vert}^2\\
			&\le \sum_{i=1}^{m}\pi_i(t){\Vert \theta_i(t)-v\Vert}^2
			-2\alpha(t)c^\top(\overline\theta(t)-v)+C^2\alpha^2(t)\\
			&\ +2C\alpha(t)\sum_{i=1}^{m}\pi_i(t)\Vert\theta_i(t)-\overline\theta(t)\Vert-\sum_{i=1}^{m}\pi_i(t+1)\Vert\phi_i(t)\Vert^2\\
		\end{aligned}	
	\end{equation}
	where $C=\Vert c\Vert$, $\overline\theta(t)=\sum_{j=1}^{m}\pi_j(t)\theta_j(t),\ \forall t\ge 0$, and
		$\phi_i(t)=\theta_i(t+1)-\big[\sum_{j=1}^{m}a_{ij}(t)\theta_j(t)-\alpha(t)c\big].$
\end{proposition}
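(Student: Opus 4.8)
The plan is to treat $V(t)=\sum_{i=1}^m \pi_i(t)\Vert\theta_i(t)-v\Vert^2$ as a weighted Lyapunov quantity and to produce the stated one-step estimate by combining three ingredients: the firm non-expansiveness of the Euclidean projection, the row-stochasticity of $A(t)$, and the relation $(\pi(t))^\top=(\pi(t+1))^\top A(t)$ of Lemma~\ref{weighted rule lemma}(c). Throughout, Assumption~\ref{Assumption ip} is what guarantees $\Omega\neq\emptyset$, so that a reference point $v\in\Omega$ exists, while Assumptions~\ref{Assumption 2} and \ref{Assumption wr} are exactly what make Lemma~\ref{weighted rule lemma} available.

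First I would isolate the projection. Writing $w_i(t)=\sum_{j=1}^m a_{ij}(t)\theta_j(t)-\alpha(t)c$, the update reads $\theta_i(t+1)=P_{\Omega_i}[w_i(t)]$ and $\phi_i(t)=\theta_i(t+1)-w_i(t)$. Since $v\in\Omega\subseteq\Omega_i$, the obtuse-angle characterization $\langle w_i(t)-\theta_i(t+1),\,v-\theta_i(t+1)\rangle\le 0$ of the projection yields the three-point inequality $\Vert\theta_i(t+1)-v\Vert^2\le\Vert w_i(t)-v\Vert^2-\Vert\phi_i(t)\Vert^2$. This single step accounts for the Euclidean projection in the algorithm and is the origin of the $-\Vert\phi_i(t)\Vert^2$ term.

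Next I would expand the pre-projection point. Setting $z_i(t)=\sum_{j=1}^m a_{ij}(t)\theta_j(t)$, so that $w_i(t)=z_i(t)-\alpha(t)c$, a direct expansion gives $\Vert w_i(t)-v\Vert^2=\Vert z_i(t)-v\Vert^2-2\alpha(t)c^\top(z_i(t)-v)+C^2\alpha^2(t)$. I would then take the $\pi_i(t+1)$-weighted sum over $i$ and treat the three pieces separately. The $C^2\alpha^2(t)$ piece collapses at once because $\sum_i\pi_i(t+1)=1$. For the linear piece, the identity $\sum_i\pi_i(t+1)z_i(t)=\sum_j\big(\sum_i\pi_i(t+1)a_{ij}(t)\big)\theta_j(t)=\sum_j\pi_j(t)\theta_j(t)=\overline\theta(t)$, which invokes Lemma~\ref{weighted rule lemma}(c), turns it into exactly $-2\alpha(t)c^\top(\overline\theta(t)-v)$. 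For the quadratic piece, row-stochasticity of $A(t)$ makes $z_i(t)$ a convex combination of the $\theta_j(t)$, so Jensen's inequality gives $\Vert z_i(t)-v\Vert^2\le\sum_j a_{ij}(t)\Vert\theta_j(t)-v\Vert^2$; applying Lemma~\ref{weighted rule lemma}(c) once more collapses $\sum_i\pi_i(t+1)\sum_j a_{ij}(t)\Vert\theta_j(t)-v\Vert^2$ to $\sum_j\pi_j(t)\Vert\theta_j(t)-v\Vert^2$.

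Assembling the pieces already delivers the claim; in fact it produces a slightly sharper bound in which the consensus-error term $2C\alpha(t)\sum_i\pi_i(t)\Vert\theta_i(t)-\overline\theta(t)\Vert$ does not appear, so I would simply append this non-negative quantity to the right-hand side to obtain the stated form, keeping it in place to match the bookkeeping of the subsequent convergence argument modeled on \cite{mai2019}. The main obstacle here is conceptual rather than computational: choosing the time-varying vector $\pi(t)$ as the Lyapunov weight and recognizing that $(\pi(t))^\top=(\pi(t+1))^\top A(t)$ is precisely the device that pulls the $\pi(t+1)$-weighting at step $t+1$ back to a $\pi(t)$-weighting at step $t$, simultaneously in the linear term and the quadratic term. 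Once that substitution is identified, everything else reduces to the projection inequality and Jensen's inequality.
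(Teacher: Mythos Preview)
Your argument is correct and, in fact, slightly sharper than the paper's. The paper handles the cross term agent by agent: it writes $c^\top(v-\tilde\theta_i(t))=c^\top(v-\overline\theta(t))+c^\top(\overline\theta(t)-\tilde\theta_i(t))$ and bounds the second piece via $|c^\top(\overline\theta(t)-\tilde\theta_i(t))|\le C\sum_j a_{ij}(t)\Vert\theta_j(t)-\overline\theta(t)\Vert$ \emph{before} taking the $\pi_i(t+1)$-weighted sum, which is precisely how the extra term $2C\alpha(t)\sum_i\pi_i(t)\Vert\theta_i(t)-\overline\theta(t)\Vert$ enters. You instead sum first and use linearity together with Lemma~\ref{weighted rule lemma}(c) to compute $\sum_i\pi_i(t+1)z_i(t)=\overline\theta(t)$ exactly, so the linear term collapses to $-2\alpha(t)c^\top(\overline\theta(t)-v)$ with no residual. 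Both routes rely on the same three ingredients (projection three-point inequality, Jensen for the quadratic part, and $(\pi(t))^\top=(\pi(t+1))^\top A(t)$); the only distinction is the order in which the weighted sum and the estimate on the linear term are performed. Your order shows the consensus-error term in the stated inequality is pure slack, and appending it back as a non-negative quantity is a legitimate way to recover the proposition verbatim.
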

\begin{proof}
	Let $\tilde{\theta}_i(t)=\sum_{j=1}^{m}a_{ij}(t)\theta_j(t)$ and $v\in\Omega$. By Lemma 1(b) in \cite{nedic}, it holds that
\begin{equation}\label{104}
	\begin{aligned}
		&{\Vert\theta_i(t+1)-v\Vert}^2\!\!\!&=&{\Vert\tilde{\theta}_i(t)-v-\alpha(t)c+\phi_i(t)\Vert}^2\\
		&&\le&{\Vert\tilde{\theta}_i(t)-v-\alpha(t)c\Vert}^2\!-\!{\Vert \phi_i(t)\Vert}^2.
	\end{aligned}
\end{equation}
The first term on the right-hand side of (\ref{104}) is equal to
\begin{equation}\label{105}
	{\Vert \tilde{\theta}_i(t)-v\Vert}^2+2\alpha(t)c^\top(v-\tilde{\theta}_i(t))+\alpha^2(t) C^2.
\end{equation}
Next, we establish an upper bound for each term in (\ref{105}). Since the matrix $A(t)$ is row stochastic, it holds that $\tilde{\theta}_i(t)-v=\sum_{j=1}^{m}a_{ij}(t)(\theta_j(t)-v)$. By the convexity of $\Vert\cdot\Vert^2$, we have
\begin{equation}\label{106}
	{\Vert \tilde{\theta}_i(t)-v\Vert}^2\le \sum_{j=1}^{m}a_{ij}(t){\Vert\theta_j(t)-v\Vert}^2.
\end{equation}
Disregarding $2\alpha(t)$, the second term in (\ref{105}) meets
\begin{equation}\label{107}
	\hspace{-7mm}
	\begin{aligned}
		&&c^\top(v-\tilde{\theta}_i(t))&
		\le c^\top(v-\overline\theta(t))+\big\vert c^\top\tilde{\theta}_i(t)-c^\top\overline\theta(t)\big\vert\\
		&&&\le\! c^\top\!(v\!-\!\overline\theta(t))\!+\!C\!\sum_{j=1}^{m}\!a_{ij}(t)\Vert \theta_j(t)-\overline\theta(t)\Vert,
	\end{aligned}
\end{equation}
where the first inequality derives from the triangle inequality, and the last inequality follows from the Cauchy-Schwarz inequality and the convexity of $\Vert\cdot\Vert$.
Using the relations of (\ref{106}) and (\ref{107}), we can obtain the upper bound of (\ref{105}). Then, we take this upper bound into (\ref{104}) and execute a weighted sum on both sides of the (\ref{104}), yielding:
\begin{equation}\label{108}
	\hspace{-5mm}
	\begin{aligned}
		&&&\sum_{i=1}^{m}\pi_i(t+1){\Vert\theta_i(t+1)-v\Vert}^2\\
		&&&\le\sum_{i=1}^{m}\pi_i(t+1)\sum_{j=1}^{m}a_{ij}(t){\Vert\theta_j(t)-v\Vert}^2\\
		&&& 
		-\sum_{i=1}^{m}\pi_i(t+1)2\alpha(t)c^\top(\overline\theta(t)-v)\\
		&&& +\sum_{i=1}^{m}\pi_i(t+1)2\alpha(t)C\sum_{j=1}^{m}a_{ij}(t)\Vert \theta_j(t)-\overline\theta(t)\Vert\\
		&&& +\sum_{i=1}^{m}\pi_i(t+1)\alpha^2(t)C^2-\sum_{i=1}^{m}\pi_i(t+1){\Vert \phi_i(t)\Vert}^2.
	\end{aligned}
\end{equation}
By Lemma \ref{weighted rule lemma}(c), it holds that $\sum_{i=1}^{m}\pi_i(t\!+\!1) \sum_{j=1}^{m}a_{ij}(t)$ ${\Vert\theta_j(t)-v\Vert}^2\!=\!\sum_{i=1}^{m}\pi_i(t){\Vert(\theta_i(t)-v)\Vert}^2$, $\sum_{i=1}^{m}\pi_i(t\!+\!1)\sum_{j=1}^{m}a_{ij}(t)\Vert \theta_j(t)-\overline\theta(t)\Vert\!=\!\sum_{i=1}^{m}\pi_i(t)\Vert \theta_i(t)-\overline\theta(t)\Vert$.
Taking this result and $\sum_{i=1}^{m}\pi_i(t+1)=1$ into (\ref{108}), we can obtain (\ref{102}) as desired.
	\end{proof}
\begin{proposition}\label{subproblem_important_result_2}
	Let Assumptions \ref{Assumption 2}--\ref{Assumption wr} be satisfied. Then for the algorithm (\ref{alg _dpg}), the following relations hold.
		\begin{itemize}
		\item[(a)] Let $\beta(t)=\sum_{i=1}^{m}\Vert\phi_i(t)\Vert$. For any $t\ge0$,
			$\beta^2(t)\le m\sum_{i=1}^{m}\Vert\phi_i(t)\Vert^2$.
		\item[(b)] Let $D_1=B\sum_{j=1}^{m}\Vert\theta_j(0)\Vert$ and $D_2=mBC$. For any $i\in\mathcal{V}$, 
		$	\Vert \theta_i(t)-\overline\theta(t)\Vert\!\le\! D_1\lambda^{t}\!+\!\sum_{r=0}^{t-1}\lambda^{t-r-1}[B\beta(r)\!+\!D_2\alpha(r)]$.
		\item[(c)] Let $\psi(t)=\alpha(t)\sum_{r=0}^{t-1}\lambda^{t-r-1}\beta(r)$ with $\psi(0)=0$. If the stepsize sequence $\left\{\alpha(t)\right\}$ is nonincreasing, then
		$	\psi(t+1)\le\lambda\psi(t)+\alpha(t)\beta(t)$.
	\end{itemize}
\end{proposition}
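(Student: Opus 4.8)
Part (a) I would dispose of in a single line as an instance of the Cauchy--Schwarz (or power-mean) inequality: viewing $\beta(t)=\sum_{i=1}^m 1\cdot\Vert\phi_i(t)\Vert$ as the inner product of the all-ones vector with $(\Vert\phi_1(t)\Vert,\dots,\Vert\phi_m(t)\Vert)$ gives $\beta^2(t)\le\big(\sum_{i=1}^m 1\big)\big(\sum_{i=1}^m\Vert\phi_i(t)\Vert^2\big)=m\sum_{i=1}^m\Vert\phi_i(t)\Vert^2$. No structure of the algorithm is needed here.

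For part (b), which I expect to be the main obstacle, the plan is to unroll the update (\ref{alg _dpg}). Using the definition of $\phi_i(t)$ from Proposition \ref{subproblem_important_result_1}, I rewrite the recursion as $\theta_i(t+1)=\sum_{j}a_{ij}(t)\theta_j(t)-\alpha(t)c+\phi_i(t)$ and iterate it with the transition matrices $A(t:s)$ of Lemma \ref{weighted rule lemma}, obtaining the closed form
\begin{equation}\nonumber
\theta_i(t)=\sum_{j=1}^m a_{ij}(t:0)\theta_j(0)+\sum_{r=0}^{t-1}\sum_{j=1}^m a_{ij}(t:r+1)\big[\phi_j(r)-\alpha(r)c\big].
\end{equation}
The delicate step is to produce the matching expansion for $\overline\theta(t)=\sum_j\pi_j(t)\theta_j(t)$. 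Here I would invoke the left-invariance $\pi(t)^\top=\pi(t+1)^\top A(t)$ of Lemma \ref{weighted rule lemma}(c), which telescopes to $\pi(t)^\top A(t:s)=\pi(s)^\top$; combining this with the normalization $\mathbf{1}_m^\top\pi(s)=1$ collapses the weighted expansion to
\begin{equation}\nonumber
\overline\theta(t)=\sum_{j=1}^m \pi_j(0)\theta_j(0)+\sum_{r=0}^{t-1}\sum_{j=1}^m \pi_j(r+1)\big[\phi_j(r)-\alpha(r)c\big].
\end{equation}
Subtracting the two displays, every coefficient becomes a mixing gap $a_{ij}(t:0)-\pi_j(0)$ or $a_{ij}(t:r+1)-\pi_j(r+1)$. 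I then take norms, apply the triangle inequality, and bound each gap by $B\lambda^{t}$ respectively $B\lambda^{t-r-1}$ via Lemma \ref{weighted rule lemma}(a), while bounding $\Vert c\Vert$ by $C$. Collecting the initial term into $D_1\lambda^t$ with $D_1=B\sum_j\Vert\theta_j(0)\Vert$, the perturbation terms into $B\lambda^{t-r-1}\beta(r)$ using $\sum_j\Vert\phi_j(r)\Vert=\beta(r)$, and the stepsize terms into $D_2\lambda^{t-r-1}\alpha(r)$ with $D_2=mBC$, yields the claimed bound. (Note that I deliberately bound the stepsize terms entrywise rather than cancelling them through row stochasticity, which is what keeps the $D_2\alpha(r)$ contribution.)

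Part (c) is then routine recursion algebra. Peeling the $r=t$ summand off $\psi(t+1)=\alpha(t+1)\sum_{r=0}^{t}\lambda^{t-r}\beta(r)$ and factoring $\lambda$ out of the remaining sum rewrites it as $\lambda\,\alpha(t+1)\sum_{r=0}^{t-1}\lambda^{t-r-1}\beta(r)+\alpha(t+1)\beta(t)$. Finally I apply the monotonicity hypothesis $\alpha(t+1)\le\alpha(t)$, which upper-bounds the first term by $\lambda\psi(t)$ and the second by $\alpha(t)\beta(t)$, giving $\psi(t+1)\le\lambda\psi(t)+\alpha(t)\beta(t)$ as desired.
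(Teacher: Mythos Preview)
Your proposal is correct and follows essentially the same route as the paper: parts (a) and (c) are declared straightforward there, and for part (b) the paper likewise unrolls the recursion with perturbation $\epsilon_i(t)=\phi_i(t)-\alpha(t)c$, uses Lemma~\ref{weighted rule lemma}(c) to expand $\overline\theta(t)$, subtracts, and bounds the mixing gaps via Lemma~\ref{weighted rule lemma}(a) before applying $\Vert\epsilon_i(t)\Vert\le\Vert\phi_i(t)\Vert+C\alpha(t)$. Your parenthetical remark that the $\alpha(r)c$ contributions would in fact cancel exactly (since $\sum_j a_{ij}(t{:}r{+}1)=\sum_j\pi_j(r{+}1)=1$) is a valid sharpening the paper does not mention, but since the proposition is stated with the $D_2\alpha(r)$ term you are right to retain it.
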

\begin{proof}
Parts (a) and (c) are straightforward. Here, we only prove  that part (b) holds. Firstly, we rewrite (\ref{alg _dpg}) as $\theta_i(t+1)=\sum_{j=1}^{m}a_{ij}(t)\theta_j(t)+\epsilon_i(t)$, where $\epsilon_i(t)\in\mathbb{R}^n$ represents an error term. Hence, it holds that $\theta_i(t)=\sum_{j=1}^{m}[A(t:0)]_{ij}\theta_j(0)+\sum_{r=0}^{t-1}\sum_{j=1}^{m}[A(t:r+1)]_{ij}\epsilon_j(r).$ Due to $\overline\theta(t)=\sum_{j=1}^{m}\pi_j(t)\theta_j(t)$ and Lemma \ref{weighted rule lemma}(c), we obtain $\overline\theta(t)=\sum_{j=1}^{m}\pi_j(0)\theta_j(0)+\sum_{r=0}^{t-1}\sum_{j=1}^{m}\pi_j(r+1)\epsilon_j(r).$ Therefore,
\begin{equation}
	\nonumber
	\begin{aligned}
	\Vert\theta_i(t)-\overline{\theta}(t)\Vert&=\bigg\Vert\sum_{j=1}^{m}\big([A(t:0)]_{ij}-\pi_j(0)\big)\theta_j(0)\\&+\sum_{r=0}^{t-1}\sum_{j=1}^{m}\big([A(t:r+1)]_{ij}-\pi_j(r+1)\big)\epsilon_j(r)\bigg\Vert\\&\le\sum_{j=1}^{m}\big\vert[A(t:0)]_{ij}-\pi_j(0)\big\vert\Vert\theta_j(0)\Vert\\&+\sum_{r=0}^{t-1}\sum_{j=1}^{m}\big\vert[A(t:r+1)]_{ij}-\pi_j(r+1)\big\vert\Vert\epsilon_j(r)\Vert.
	\end{aligned}
	\end{equation}
  By Lemma \ref{weighted rule lemma}(a), we have
\begin{equation}\label{113}
	\Vert\theta_i(t)-\overline{\theta}(t)\Vert\!\le\! B\sum_{j=1}^{m}
	\Vert\theta_j(0)\Vert\lambda^{t}\!+\!B\sum_{r=0}^{t-1}\lambda^{t-r-1}\sum_{j=1}^{m}\Vert\epsilon_j(r)\Vert.
\end{equation}
Consider the algorithm (\ref{alg _dpg}), it satisfies that $\epsilon_i(t)=\phi_i(t)-\alpha(t)c$. By utilizing the triangle inequality, it holds that
	$\Vert\epsilon_i(t)\Vert\le\Vert\phi_i(t)\Vert+\alpha(t)C,\ \forall i\in\mathcal{V}$.
Using the bound into (\ref{113}), the result in part (b) is proved.
	\end{proof}
	\begin{corollary}\label{subproblem_important_corollary}
		In Proposition \ref{subproblem_important_result_2}, if $\lim_{t\rightarrow\infty}\beta(t)=0$, then $\lim_{t\rightarrow\infty}\psi(t)=0$. In addition, if $\lim_{t\rightarrow\infty}\alpha(t)=0$, then $\lim_{t\rightarrow\infty}\sum_{i=1}^{m}\pi_i(t)\Vert\theta_i(t)-\overline\theta(t)\Vert=0$.
	\end{corollary}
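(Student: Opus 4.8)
The plan is to derive both limits directly from the two structural estimates in Proposition \ref{subproblem_important_result_2}, namely the one-step recursion in part (c) and the explicit disagreement bound in part (b), together with the geometric decay $0<\lambda<1$ supplied by Lemma \ref{weighted rule lemma}(a). No genuinely new machinery is needed; everything reduces to two standard facts about sequences forced by a contraction with a vanishing input.

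For the first claim $\lim_{t\to\infty}\psi(t)=0$, I would start from the recursion $\psi(t+1)\le\lambda\psi(t)+\alpha(t)\beta(t)$ of part (c). Since $\{\alpha(t)\}$ is nonincreasing and positive it is bounded above by $\alpha(0)$, so the forcing term obeys $0\le\alpha(t)\beta(t)\le\alpha(0)\beta(t)$, which tends to $0$ under the hypothesis $\beta(t)\to 0$. It then remains to invoke the elementary fact that a nonnegative sequence satisfying $\psi(t+1)\le\lambda\psi(t)+\delta(t)$ with $0<\lambda<1$ and $\delta(t)\to 0$ must converge to zero. I would establish this by first noting that boundedness of $\delta$ yields, by induction, $\psi(t)\le\lambda^t\psi(0)+\sup_s\delta(s)/(1-\lambda)$, so $\psi$ is bounded; then taking $\limsup$ on both sides of the recursion gives $L\le\lambda L$ for $L=\limsup_t\psi(t)<\infty$, i.e. $L(1-\lambda)\le 0$, and since $\psi\ge 0$ this forces $L=0$.

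For the consensus-error claim, the key observation is that the right-hand side of the bound in part (b) does not depend on $i$. Since $\pi_i(t)\ge 0$ and $\sum_{i=1}^m\pi_i(t)=1$, the weighted sum is dominated by that common bound, giving $\sum_{i=1}^m\pi_i(t)\Vert\theta_i(t)-\overline\theta(t)\Vert\le D_1\lambda^t+B\sum_{r=0}^{t-1}\lambda^{t-r-1}\beta(r)+D_2\sum_{r=0}^{t-1}\lambda^{t-r-1}\alpha(r)$. The leading term $D_1\lambda^t$ vanishes since $0<\lambda<1$. For each convolution term I would apply the standard lemma that if $x(r)\ge 0$ with $x(r)\to 0$ and $0<\lambda<1$, then $\sum_{r=0}^{t-1}\lambda^{t-r-1}x(r)\to 0$: split the sum at a threshold $T$ beyond which $x(r)\le\epsilon$, bound the tail by $\epsilon/(1-\lambda)$ and the fixed head by the vanishing factor $\lambda^{t-T}$, and let $\epsilon\downarrow 0$. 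Applying this with $x=\beta$ (the hypothesis $\beta(t)\to 0$) and with $x=\alpha$ (the added hypothesis $\alpha(t)\to 0$) makes both convolution terms vanish, yielding the conclusion.

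The two ``geometric-convolution tends to zero'' arguments are routine, so I do not expect a serious obstacle. The only points requiring a little care are establishing the uniform boundedness of $\psi$ before passing to $\limsup$ in the first part, and keeping track that \emph{both} hypotheses $\beta(t)\to 0$ and $\alpha(t)\to 0$ must be in force for the second claim: the vanishing of $\psi(t)=\alpha(t)\sum_{r=0}^{t-1}\lambda^{t-r-1}\beta(r)$ alone does not control the $\beta$-convolution appearing in part (b), since that term carries no compensating factor $\alpha(t)$, so it is handled separately through the convolution lemma applied to $\beta$ itself.
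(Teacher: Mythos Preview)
Your proposal is correct and follows essentially the same approach as the paper, which simply defers to Lemma~7 in \cite{nedic}---precisely the ``geometric convolution with a vanishing input tends to zero'' fact that you state and prove. You have supplied the details that the paper omits: the $\limsup$ argument for the recursion in part~(c), the reduction of the weighted sum to the uniform bound in part~(b), and the splitting argument for the convolutions; your observation that both hypotheses $\beta(t)\to 0$ and $\alpha(t)\to 0$ are required for the second claim (since the $\beta$-convolution in part~(b) carries no factor $\alpha(t)$) is exactly right.
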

	\begin{proof}
	It can be derived from Lemma 7 in \cite{nedic}.
		\end{proof}
		\par
Based on Propositions \ref{subproblem_important_result_1} and \ref{subproblem_important_result_2}. we obtain the following important result, which is crucial for proving the convergence of the distributed projected gradient algorithm. 
	\begin{proposition}\label{subproblem_important_result_3}
		Let Assumptions \ref{Assumption 2}--\ref{Assumption wr} be satisfied. If $\left\{\alpha(t)\right\}$ is nonincreasing, then for the algorithm (\ref{alg _dpg}), we have
		\begin{equation}\label{115}
			\hspace{-2mm}
			\begin{aligned}
		&\sum_{i=1}^{m}\pi_i(t+1){\Vert \theta_i(t+1)-v\Vert}^2\!+\!ab\psi(t+1)\\
		&\le \sum_{i=1}^{m}\!\pi_i(t){\Vert \theta_i(t)\!-\!v\Vert}^2\!+\!ab\psi(t)\!-\!2\alpha(t)c^\top\!(w(t)\!-\!v)\!+\!D_5\alpha^2(t)\\
		&-\!D_4\sum_{i=1}^{m}\Vert\phi_i(t)\Vert^2\!+\!D_{13}\alpha(t)\lambda^t+D_{23}\alpha(t)\sum_{r=0}^{t-1}\lambda^{t-r-1}\alpha(r),		
			\end{aligned}	
		\end{equation}
where $w(t)=P_{\Omega}[\overline\theta(t)]$, $b=\sqrt{\frac{\eta}{m}}$, $a=\frac{D_3B}{(1-\lambda)b}$, $D_3= 2C(1+\frac{R}{\eta})$, $D_4=\frac{\eta}{2}$, $D_5=C^2+\frac{a^2}{2}$, $D_{13}=D_1D_3$ and $D_{23}=D_2D_3$.
	\end{proposition}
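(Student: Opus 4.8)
The plan is to start from the one-step bound already proved in Proposition~\ref{subproblem_important_result_1} and to view the quantity $\sum_i\pi_i(t)\Vert\theta_i(t)-v\Vert^2+ab\psi(t)$ as an augmented Lyapunov function, so that the awkward consensus-error term $2C\alpha(t)\sum_i\pi_i(t)\Vert\theta_i(t)-\overline\theta(t)\Vert$ in that bound telescopes. First I would replace the averaged iterate $\overline\theta(t)$ by its projection $w(t)=P_\Omega[\overline\theta(t)]$ in the linear term: splitting $c^\top(\overline\theta(t)-v)=c^\top(w(t)-v)+c^\top(\overline\theta(t)-w(t))$ and applying Cauchy--Schwarz to the second piece produces an extra $2C\alpha(t)\,{\rm dist}(\overline\theta(t),\Omega)$. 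Since every $\theta_i(t)\in\Omega_i$ and $\overline\theta(t)\in{\rm conv}(\cup_i\Omega_i)$, one has ${\rm dist}(\overline\theta(t),\Omega_i)\le\Vert\overline\theta(t)-\theta_i(t)\Vert$, so the regularity inequality of Lemma~\ref{regularity lemma} together with $\pi_i(t)\ge\eta$ from Lemma~\ref{weighted rule lemma}(b) gives ${\rm dist}(\overline\theta(t),\Omega)\le\frac{R}{\eta}\sum_i\pi_i(t)\Vert\theta_i(t)-\overline\theta(t)\Vert$. Combined with the original cross term, both collapse into one consensus-error term with coefficient $2C(1+\frac{R}{\eta})=D_3$.

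Next I would estimate the consensus error by Proposition~\ref{subproblem_important_result_2}(b); because $\sum_i\pi_i(t)=1$, the bound splits into a decaying part $D_1\lambda^t$, a weighted tail $B\sum_r\lambda^{t-r-1}\beta(r)$, and a step-size tail $D_2\sum_r\lambda^{t-r-1}\alpha(r)$. Multiplying by $D_3\alpha(t)$ yields precisely $D_{13}\alpha(t)\lambda^t$ and $D_{23}\alpha(t)\sum_r\lambda^{t-r-1}\alpha(r)$ (as $D_{13}=D_1D_3$, $D_{23}=D_2D_3$), while the middle piece equals $D_3B\,\psi(t)$ by the definition of $\psi$.

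The crux is to absorb $D_3B\,\psi(t)$ into the telescoping term. Using $a=\frac{D_3B}{(1-\lambda)b}$ I would write $D_3B\,\psi(t)=ab(1-\lambda)\psi(t)=ab\,\psi(t)-ab\lambda\,\psi(t)$ and invoke the recursion $\psi(t+1)\le\lambda\psi(t)+\alpha(t)\beta(t)$ of Proposition~\ref{subproblem_important_result_2}(c) to replace $-ab\lambda\,\psi(t)$ by $-ab\,\psi(t+1)+ab\,\alpha(t)\beta(t)$. Transferring $ab\,\psi(t+1)$ to the left-hand side realises the augmented Lyapunov function and leaves only the residual $ab\,\alpha(t)\beta(t)$ to control.

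Finally I would dispatch $ab\,\alpha(t)\beta(t)$ by Young's inequality, $ab\,\alpha(t)\beta(t)\le\frac{a^2}{2}\alpha^2(t)+\frac{b^2}{2}\beta^2(t)$. With $b^2=\eta/m$ and Proposition~\ref{subproblem_important_result_2}(a) giving $\beta^2(t)\le m\sum_i\Vert\phi_i(t)\Vert^2$, the second Young term is $\frac{\eta}{2}\sum_i\Vert\phi_i(t)\Vert^2=D_4\sum_i\Vert\phi_i(t)\Vert^2$, and the first merges with the $C^2\alpha^2(t)$ from Proposition~\ref{subproblem_important_result_1} into $D_5\alpha^2(t)$. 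The decisive bookkeeping is that the projection-residual term carried over satisfies $-\sum_i\pi_i(t+1)\Vert\phi_i(t)\Vert^2\le-\eta\sum_i\Vert\phi_i(t)\Vert^2=-2D_4\sum_i\Vert\phi_i(t)\Vert^2$, so that adding back the $+D_4\sum_i\Vert\phi_i(t)\Vert^2$ from Young leaves exactly $-D_4\sum_i\Vert\phi_i(t)\Vert^2$, matching~(\ref{115}). I expect the main obstacle to be not any single estimate but this interlocking of constants: one must reserve exactly half of the $\eta$-factor to beat the Young term, and verify that $a$, $b$, $D_3$, $D_4$, $D_5$ are chosen so that the projection bound, the $\psi$-recursion, and Young's inequality all close simultaneously.
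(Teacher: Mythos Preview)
Your proposal is correct and follows essentially the same route as the paper's proof: replace $\overline\theta(t)$ by $w(t)$ via Cauchy--Schwarz and the regularity Lemma~\ref{regularity lemma} (with $\pi_i(t)\ge\eta$) to obtain the $D_3$-weighted consensus term, bound it through Proposition~\ref{subproblem_important_result_2}(b), cancel the $D_3B\,\psi(t)$ contribution against the $\psi$-recursion using the choice $a=\frac{D_3B}{(1-\lambda)b}$, and finally split $ab\,\alpha(t)\beta(t)$ by Young's inequality so that the $b^2$-piece (with $b^2=\eta/m$ and Proposition~\ref{subproblem_important_result_2}(a)) exactly consumes half of the $-\eta\sum_i\Vert\phi_i(t)\Vert^2$ coming from $\pi_i(t+1)\ge\eta$. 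The only cosmetic difference is that the paper adds $ab\,\psi(t+1)$ to both sides and then bounds it via Proposition~\ref{subproblem_important_result_2}(c), whereas you rearrange the $\psi$-recursion first and transfer $ab\,\psi(t+1)$ afterward; the arithmetic is identical.
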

	\begin{proof}
By adding and subtracting $c^\top w(t)$ and using Lipschitz continuity of the function $f_0=c^\top\theta$, we have	$c^\top(v-\overline\theta(t))\le c^\top(v-w(t))+C\Vert w(t)-\overline\theta(t)\Vert$. Subsequently, we find an upper bound for the term $\Vert w(t)-\overline\theta(t)\Vert$. According to Lemma \ref{regularity lemma}, it follows that
\begin{equation}\label{116}
	\hspace{-4mm}
	\begin{aligned}
	&\Vert w(t)-\overline\theta(t)\Vert\!=\!{\rm dist}(\overline\theta(t),\Omega)\!\le\! R \max_{i\in\mathcal{V}} {\rm dist}(\overline\theta(t),\Omega_i)\\
	&\ \le\! \sum_{i=1}^{m}\frac{R\pi_i(t)}{\eta}{\rm dist}(\overline\theta(t),\Omega_i)\!\le\! \sum_{i=1}^{m}\frac{R\pi_i(t)}{\eta}\Vert\theta_i(t)\!-\!\overline\theta(t)\Vert,
	\end{aligned}
\end{equation}
where the second inequality follows from $\pi_i(t)/\eta\ge 1$ (cf. Lemma \ref{weighted rule lemma}(b)), and the last inequality is satisfied as $\theta_i(t)\in\Omega_i$. Thus, $c^\top(v-\overline\theta(t))\le c^\top(v-w(t))+\frac{RC}{\eta}\sum_{i=1}^{m}\pi_i(t)\Vert \theta_i(t)-\overline\theta(t)\Vert$. Using this bound for (\ref{102}), we obtain
\begin{equation}\label{117}
	\hspace{-4mm}
\begin{aligned}
	&\sum_{i=1}^{m}\pi_i(t+1){\Vert \theta_i(t+1)-v\Vert}^2\\
	&\le \sum_{i=1}^{m}\pi_i(t){\Vert \theta_i(t)\!-\!v\Vert}^2\!-\!2\alpha(t)c^\top(w(t)\!-\!v)\!+\!C^2\alpha^2(t)\\
	&   +D_3\alpha(t)\!\sum_{i=1}^{m}\!\pi_i(t)\Vert \theta_i(t)\!-\!\overline\theta(t)\Vert\!-\!\sum_{i=1}^{m}\pi_i(t+1)\Vert\phi_i(t)\Vert^2,
\end{aligned}
\end{equation}
where $D_3= 2C(1+\frac{R}{\eta})$. Then, adding $ab\psi(t+1)$ to both sides of (\ref{117}) and combining the results in Proposition \ref{subproblem_important_result_2}(b), (c), we further have
\begin{equation}\label{118}
	\hspace{-4mm}
	\begin{aligned}
		&\sum_{i=1}^{m}\pi_i(t+1){\Vert \theta_i(t+1)-v\Vert}^2+ab\psi(t+1)\\
		&\le \sum_{i=1}^{m}\pi_i(t){\Vert \theta_i(t)\!-\!v\Vert}^2\!+\!ab\psi(t)\!+\!ab(\lambda\!-\!1)\psi(t)\!+\!ab\alpha(t)\beta(t)\\
		&  -\!2\alpha(t)c^\top(w(t)\!-\!v)\!+\!C^2\alpha^2(t)-\!\sum_{i=1}^{m}\pi_i(t+1)\Vert\phi_i(t)\Vert^2\!\\
		& +\!D_{13}\alpha(t)\lambda^t\!+\!D_{23}\alpha({t})\sum_{r=0}^{t-1}\lambda^{t-r-1}\alpha(r)\!+\!D_3B\psi(t).
	\end{aligned}
\end{equation}
where $D_{13}=D_1D_3$, $D_{23}=D_2D_3$. Since $a=\frac{D_3B}{(1-\lambda)b}$, the two terms $ab(\lambda-1)\psi(t)$ and $D_3B\psi(t)$ cancel out. Moreover, we can also obtain $2ab\alpha(t)\beta(t)\le a^2\alpha^2(t)+b^2\beta^2(t)\le a^2\alpha^2(t)+b^2m\sum_{i=1}^{m}\Vert\phi_i(t)\Vert^2$, where the last inequality follows from Proposition \ref{subproblem_important_result_2}(a). Therefore, with $b^2=\frac{\eta}{m}$, it can be verified that $ab\alpha(t)\beta(t)-\sum_{i=1}^{m}\pi_i(t+1)\Vert\phi_i(t)\Vert^2\le \frac{a^2}{2}\alpha^2(t)-\frac{\eta}{2}\sum_{i=1}^{m}\Vert\phi_i(t)\Vert^2$. By applying the above relations to (\ref{118}), we get (\ref{115}).
	\end{proof}
\par
\par
 Finally, we prove the convergence of the algorithm (\ref{alg _dpg}), which relies on the results of Proposition \ref{subproblem_important_result_3} and Lemma \ref{convergence_lemma}.
\par
{\bf \emph{Proof of Theorem 1:}}
\par
To establish the convergence of the DPG algorithm, we mainly divide the process into two steps: (i) Applying Lemma \ref{convergence_lemma} to (\ref{115}); (ii) Proving convergence.\\
{\bf Step (i):} Set $\theta^*\in\Omega^*\ (\Omega^*\subset\Omega$) as an optimal solution and define the nonnegative sequences $\left\{v(t)\right\}$, $\left\{b(t)\right\}$, $\left\{u(t)\right\}$ and $\left\{e(t)\right\}$ as follows:
\begin{equation}\nonumber
	\begin{aligned}
	&v(t)=\sum_{i=1}^{m}\pi_i(t)\Vert\theta_i(t)-\theta^*\Vert^2+ab\psi(t),\quad b(t)=0,\\
	&u(t)=2\alpha(t)c^\top(w(t)-\theta^*)+D_4\sum_{i=1}^{m}\Vert\phi_i(t)\Vert^2,\\
	&e(t)=D_{13}\alpha(t)\lambda^t+D_{23}\alpha(t)\sum_{r=0}^{t-1}\lambda^{t-r-1}\alpha(r)+D_5\alpha^2(t).
	\end{aligned}
\end{equation}
From Proposition \ref{subproblem_important_result_3}, we can obtain $v(t+1)\le(1+b(t))v(t)-u(t)+e(t),\ \forall t\ge 0$. Then, we demonstrate that $\left\{b(t)\right\}$ and $\left\{e(t)\right\}$ are summable. Since $b(t)=0$, we have $\sum_{t=0}^{\infty}b(t)=0<\infty$. Next, consider each term in $e(t)$. Firstly, $\sum_{t=0}^{\infty}D_5\alpha^2(t)<\infty$ by Assumption \ref{Assumption sr}. Secondly, using the fact $2\alpha(t)\lambda^t\le\alpha^2(t)+\lambda^{2t}$ yields $\sum_{t=0}^{\infty}D_{13}\alpha(t)\lambda^t\le\frac{D_{13}}{2}\big[\sum_{t=0}^{\infty}\alpha^2(t)+\sum_{t=0}^{\infty}\lambda^{2t}\big]<\infty$. Thirdly, by monotonicity of $\left\{\alpha(t)\right\}$, the second term in $e(t)$ satisfies: $\alpha(t)\sum_{r=0}^{t-1}\lambda^{t-r-1}\alpha(r)\le\sum_{r=0}^{t-1}\lambda^{t-r-1}\alpha^2(r)$. According to Lemma 7 in \cite{nedic}, we can obtain $\sum_{t=0}^{\infty}\sum_{r=0}^{t-1}\lambda^{t-r-1}\alpha^2(r)<\infty$. Thus, it holds that $\sum_{t=0}^{\infty}D_{23}\alpha(t)\sum_{r=0}^{t-1}\lambda^{t-r-1}\alpha(r)<\infty$. Hence, the sequence $\left\{e(t)\right\}$ is summable. By Lemma \ref{convergence_lemma}, there exists a constant $\delta>0$ such that
\begin{align}\label{120}
	&\lim_{t\rightarrow\infty}\sum_{i=1}^{m}\pi_i(t)\Vert\theta_i(t)-\theta^*\Vert^2+ab\psi(t)=\delta,\\ \label{121}
	&\sum_{t=0}^{\infty}\bigg[2\alpha(t)c^\top(w(t)-\theta^*)+D_4\sum_{i=1}^{m}\Vert\phi_i(t)\Vert^2\bigg]<\infty.
\end{align}
{\bf Step (ii):} Firstly, by (\ref{121}), we have $\lim_{t\rightarrow\infty}\sum_{i=1}^{m}\Vert\phi_i(t)\Vert^2=0$. Thus, $\lim_{t\rightarrow\infty}\beta(t)=0$, which by Corollary \ref{subproblem_important_corollary} yields $\lim_{t\rightarrow\infty}\psi(t)=0$. It then follows from (\ref{120}) that
\begin{equation}\label{122}
	\lim_{t\rightarrow\infty}\sum_{i=1}^{m}\pi_i(t)\Vert\theta_i(t)-\theta^*\Vert^2=\delta.
\end{equation}
\par
Next, since $\sum_{t=0}^{\infty}\alpha(t)=\infty$ and $c^\top(w(t)-\theta^*)\ge 0, \forall t\ge 0$, it then follows from (\ref{121}) that $\lim_{t\rightarrow\infty}w(t)=\theta^*$.
\par
It now remains to demonstrate that $\delta=0$. By using the triangle and Cauchy-Schwarz inequalities, it can be confirmed that $\Vert \theta_i(t)-\theta^*\Vert^2\le 3(\Vert\theta_i(t)-\overline{\theta}(t)\Vert^2+\Vert\overline\theta(t)-w(t)\Vert^2+\Vert w(t)-\theta^*\Vert^2)$. Next, by the inequality (\ref{116}), we have $\Vert\overline\theta(t)-w(t)\Vert^2\le\frac{R^2}{\eta^2}\sum_{i=1}^{m}\pi_i(t)\Vert\theta_i(t)-\overline\theta(t)\Vert^2$ by the convexity of $\Vert\cdot\Vert^2$. As a result, $\frac{\Vert \theta_i(t)-\theta^*\Vert^2}{3}\le\Vert\theta_i(t)-\overline{\theta}(t)\Vert^2+\frac{R^2}{\eta^2}\sum_{i=1}^{m}\pi_i(t)\Vert\theta_i(t)-\overline\theta(t)\Vert^2+\Vert w(t)-\theta^*\Vert^2$. Multiplying both sides by $\pi_i(t)$ then summing over $i\in\mathcal{V}$ yields the follwing, with $R'=1+R^2/\eta^2$:
\begin{equation}\nonumber
\sum_{i=1}^{m}\!\frac{\pi_i(t)}{3}\Vert \theta_i(t)-\theta^*\!\Vert^2\!\le\! R'\sum_{i=1}^{m}\!\pi_i(t)\Vert\theta_i(t)-\overline\theta(t)\!\Vert^2\!+\!\Vert w(t)-\theta^*\Vert^2.
\end{equation}
Taking $\lim_{t\rightarrow\infty}$ on both sides and using (\ref{122}) yields:
\begin{equation}\label{123}
	\frac{\delta}{3}\le\lim_{t\rightarrow\infty}\Vert w(t)-\theta^*\Vert^2.
\end{equation}
Here we have use the fact $\lim_{t\rightarrow\infty}\sum_{i=1}^{m}\pi_i(t)\Vert\theta_i(t)-\overline\theta(t)\Vert=0$ (cf. Corollary \ref{subproblem_important_corollary}). Since $\delta\ge 0$ and $\lim_{t\rightarrow\infty}\Vert w(t)-\theta^*\Vert^2=0$, it holds that $\delta=0$. Therefore, we can obtain the convergence of the algorithm (\ref{alg _dpg}), i.e., $ \lim_{t\rightarrow\infty}\theta_i(t)=\theta^*,\ \forall i\in\mathcal{V}$.\hfill $\blacksquare$

\subsection{Proof of Theorem 2}
\begin{proposition}\label{subproblem_important_result_4}
Suppose that the stepsize sequence $\left\{\alpha(t)\right\}$ is nonnegative and nonincreasing and Assumptions \ref{Assumption 2}--\ref{Assumption wr} hold. Define
	$\hat\theta_i(t)=\frac{\sum_{r=0}^{t}\alpha(r)\theta_i(r)}{\sum_{r=0}^{t}\alpha(r)}$, $\hat w(t)=\frac{\sum_{r=0}^{t}\alpha(r)w(r)}{\sum_{r=0}^{t}\alpha(r)}$.
For the algorithm (\ref{alg _dpg}), the following holds:
\begin{equation}\label{125}
	C_0\Vert \hat\theta_i(t)-\hat w(t)\Vert+c^\top(\hat w(t)-\theta^*)\le E(t),\quad \forall t\ge 0,
\end{equation}
where $E(t)\!=\!\frac{C_1+C_2\sum_{r=0}^{t}\alpha^2(r)}{\sum_{\tau=0}^{t}\alpha(\tau)}$, $C_0=\frac{(1-\lambda)D_4}{2m(Rm+1)B}$, $C_1\!=\!R_1\!+\!\frac{D_{13}}{2(1-\lambda)}\alpha(0)\!+\!\frac{D_4D_1\alpha(0)}{2mB}$ and $C_2\!=\!\frac{D_{23}}{2(1-\lambda)}\!+\!\frac{D_5}{2}\!+\!\frac{D_4}{2m}(\frac{D_2}{B}\!+\!\frac{1}{4})$
with $R_1=\frac{1}{2}\sum_{i=1}^{m}\pi_i(0)\Vert\theta_i(0)-\theta^*\Vert$ for any $\theta^*\in\Omega^{*}$. Moreover,
\begin{equation}\label{126}
	\vert f_0(\hat\theta_i(t))-f_0^*\vert\le E(t)(1+C/C_0).
\end{equation}
\end{proposition}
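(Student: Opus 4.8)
The plan is to run an ergodic/averaging argument on the one-step inequality (\ref{115}) of Proposition \ref{subproblem_important_result_3}, specialized to $v=\theta^*$ and summed over $t$. The first assertion (\ref{125}) will come from adding two estimates: a telescoped bound on the weighted optimality gap $\sum_r\alpha(r)c^\top(w(r)-\theta^*)$, and a separate bound on the weighted consensus error $\Vert\hat\theta_i(t)-\hat w(t)\Vert$. The decisive feature is that \emph{both} estimates produce the running sum of projection residuals $\sum_r\sum_i\Vert\phi_i(r)\Vert^2$ with opposite signs, so they cancel and only the $\alpha^2$-sum and the initial constant survive, giving the numerator $C_1+C_2\sum_r\alpha^2(r)$. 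The second assertion (\ref{126}) will then follow quickly from (\ref{125}) by Cauchy--Schwarz together with the nonnegativity of $c^\top(w(r)-\theta^*)$.

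First I would set $v=\theta^*$ in (\ref{115}) and sum from $r=0$ to $t$. Since the left-hand side is nonnegative and $\psi(0)=0$, the $\sum_i\pi_i(\cdot)\Vert\theta_i(\cdot)-\theta^*\Vert^2+ab\psi(\cdot)$ terms telescope and, after dividing by $2$, yield
\[
\sum_{r=0}^t\alpha(r)c^\top(w(r)-\theta^*)+\tfrac{D_4}{2}\sum_{r=0}^t\sum_{i=1}^m\Vert\phi_i(r)\Vert^2\le R_1+\tfrac{D_5}{2}\sum_{r}\alpha^2(r)+\tfrac{D_{13}}{2}\sum_{r}\alpha(r)\lambda^r+\tfrac{D_{23}}{2}\sum_{r}\alpha(r)\sum_{s=0}^{r-1}\lambda^{r-s-1}\alpha(s).
\]
The two geometric tails are controlled using that $\{\alpha(t)\}$ is nonincreasing: $\sum_r\alpha(r)\lambda^r\le\alpha(0)/(1-\lambda)$, and, after swapping the order of summation and using $\alpha(r)\le\alpha(s)$ for $r>s$, $\sum_r\alpha(r)\sum_{s<r}\lambda^{r-s-1}\alpha(s)\le\tfrac{1}{1-\lambda}\sum_s\alpha^2(s)$. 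This reproduces the $\tfrac{D_{13}}{2(1-\lambda)}\alpha(0)$ piece of $C_1$ and the $\tfrac{D_{23}}{2(1-\lambda)}$ and $\tfrac{D_5}{2}$ pieces of $C_2$.

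The hard part is the weighted consensus estimate and engineering the cancellation. By convexity of the norm, $\Vert\hat\theta_i(t)-\hat w(t)\Vert\le(\sum_r\alpha(r))^{-1}\sum_r\alpha(r)\Vert\theta_i(r)-w(r)\Vert$, and I would bound $\Vert\theta_i(r)-w(r)\Vert\le\Vert\theta_i(r)-\overline\theta(r)\Vert+\Vert\overline\theta(r)-w(r)\Vert\le(Rm+1)\max_i\Vert\theta_i(r)-\overline\theta(r)\Vert$, using (\ref{116}) in the coarser form $\Vert\overline\theta(r)-w(r)\Vert\le R\sum_i\Vert\theta_i(r)-\overline\theta(r)\Vert$. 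Substituting the consensus bound of Proposition \ref{subproblem_important_result_2}(b), namely $\Vert\theta_i(r)-\overline\theta(r)\Vert\le D_1\lambda^r+\sum_{s<r}\lambda^{r-s-1}[B\beta(s)+D_2\alpha(s)]$, and multiplying by $C_0=\tfrac{(1-\lambda)D_4}{2m(Rm+1)B}$, the prefactor is chosen exactly so that $C_0(Rm+1)B/(1-\lambda)=D_4/(2m)$: the $D_1\lambda^r$ term gives the remaining $\tfrac{D_4D_1\alpha(0)}{2mB}$ piece of $C_1$, the $D_2\alpha(s)$ term gives the $\tfrac{D_4}{2m}\cdot\tfrac{D_2}{B}$ piece of $C_2$, and the $B\beta(s)$ term produces $\tfrac{D_4}{2m}\sum_s\alpha(s)\beta(s)$. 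On this last term I would apply Young's inequality $\alpha(s)\beta(s)\le\tfrac14\alpha^2(s)+\beta^2(s)$ and then Proposition \ref{subproblem_important_result_2}(a), $\beta^2(s)\le m\sum_i\Vert\phi_i(s)\Vert^2$, to obtain $\tfrac{D_4}{2m}\sum_s\alpha(s)\beta(s)\le\tfrac{D_4}{8m}\sum_s\alpha^2(s)+\tfrac{D_4}{2}\sum_s\sum_i\Vert\phi_i(s)\Vert^2$. Here $\tfrac{D_4}{8m}=\tfrac{D_4}{2m}\cdot\tfrac14$ is the final piece of $C_2$, and the residual term is precisely the $\tfrac{D_4}{2}\sum_r\sum_i\Vert\phi_i(r)\Vert^2$ appearing with a minus sign in the telescoped bound.

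Adding the telescoped inequality to this consensus estimate, the $\tfrac{D_4}{2}\sum_r\sum_i\Vert\phi_i(r)\Vert^2$ contributions cancel, and dividing by $\sum_\tau\alpha(\tau)$ gives (\ref{125}) with $E(t)$ exactly as stated. For (\ref{126}) I would decompose $f_0(\hat\theta_i(t))-f_0^*=c^\top(\hat\theta_i(t)-\hat w(t))+c^\top(\hat w(t)-\theta^*)$; since $w(r)\in\Omega$ and $\theta^*$ minimizes $f_0$ over $\Omega$, we have $c^\top(w(r)-\theta^*)\ge0$ and hence $c^\top(\hat w(t)-\theta^*)\ge0$, so both summands on the left of (\ref{125}) are nonnegative and each is $\le E(t)$; this yields $\Vert\hat\theta_i(t)-\hat w(t)\Vert\le E(t)/C_0$ and $0\le c^\top(\hat w(t)-\theta^*)\le E(t)$. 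Bounding $|c^\top(\hat\theta_i(t)-\hat w(t))|\le C\Vert\hat\theta_i(t)-\hat w(t)\Vert$ by Cauchy--Schwarz then gives $|f_0(\hat\theta_i(t))-f_0^*|\le(C/C_0+1)E(t)$. The main obstacle throughout is the bookkeeping in the consensus step: choosing $C_0$ and the Young split so that the projection-residual contributions cancel and the constants line up with $C_1,C_2$; the rest is routine summation-order swaps and geometric-series bounds. I note that for the telescoped initial term to appear correctly, $R_1$ should read $\tfrac12\sum_i\pi_i(0)\Vert\theta_i(0)-\theta^*\Vert^2$.
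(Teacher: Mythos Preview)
Your proposal is correct and follows essentially the same three-step structure as the paper's proof: telescope (\ref{115}) at $v=\theta^*$ to get (\ref{131}), bound $C_0\sum_r\alpha(r)\Vert\theta_i(r)-w(r)\Vert$ via Proposition~\ref{subproblem_important_result_2}(b) and the regularity estimate so that the $\tfrac{D_4}{2}\sum_r\Phi(r)$ contributions cancel, then deduce (\ref{126}) by Lipschitz continuity of $f_0$. The only cosmetic difference is that where the paper bounds $\sum_r\psi(r)$ through the recursion of Proposition~\ref{subproblem_important_result_2}(c) (inequalities (\ref{138})--(\ref{139})), you reach the same bound by directly swapping the order of summation in $\sum_r\alpha(r)\sum_{s<r}\lambda^{r-s-1}\beta(s)$ and using monotonicity of $\alpha$; your observation about the missing square in $R_1$ is also correct.
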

\begin{proof}
We proceed in three steps: (i) Using Proposition \ref{subproblem_important_result_3} to give an upper estimate for the sum $\sum_{r=0}^{t}\{2\alpha(r)c^\top( w(r)-\theta^*)+D_4\sum_{i=1}^{m}\Vert\phi_i(r)\Vert^2\}$ in terms of $\sum_{r=0}^{t}\alpha(r)$ and $\sum_{r=0}^{t}\alpha^2(r)$; (ii) Relating the left side of (\ref{125}) to this sum by leveraging the convexity of $f_0$ and Proposition \ref{subproblem_important_result_2}; (iii) Proving (\ref{126}) by using Lipschitz continuity of $f_0$ and (\ref{125}).\\
{\bf Step (i):} Let $\left\{v(t)\right\}$, $\left\{u(t)\right\}$, $\left\{b(t)\right\}$ and $\left\{e(t)\right\}$ be defined as in Step (i) of the proof of Theorem \ref{subproblem_convergence_theorem}. Furthermore, set $\Phi(t)=\sum_{i=1}^{m}\Vert\phi_i(t)\Vert^2.$
By using Proposition \ref{subproblem_important_result_3}, we have that $v(t+1)\le v(t)-u(t)+e(t),\ \forall t\ge 0$, which then implies that
\begin{equation}\label{127}
	v(t+1)\le v(0)+\sum_{r=0}^{t}[e(r)-u(r)].
\end{equation}
By rearranging terms and using $v(t+1)\ge 0$, we have
\begin{equation}\label{128}
\sum_{r=0}^{t}\bigg[\alpha(r)c^\top(w(r)-\theta^*)+\frac{D_4}{2}\Phi(r)\bigg]\le R_1+\frac{1}{2}\sum_{r=0}^{t}e(r),
\end{equation}
where $R_1\!=\!\frac{1}{2}\sum_{i=1}^{m}\pi_i(0)\Vert\theta_i(0)\!-\!\theta^*\Vert$. Next, we will establish an upper bound for $\sum_{r=0}^{t}e(r)$ from the following estimates:
\begin{equation}\label{129}
\hspace{-25mm}
\sum_{r=0}^{t}\alpha(r)\lambda^r\le\sum_{r=0}^{t}\alpha(0)\lambda^r\le\frac{\alpha(0)}{1-\lambda},
\end{equation}
\begin{equation}\label{130}
	\begin{aligned}
&\sum_{r=1}^{t}\alpha(r)\sum_{s=0}^{r-1}\lambda^{r-1-s}\alpha(s)\le\sum_{r=1}^{t}\sum_{s=0}^{r-1}\lambda^{r-1-s}\alpha^2(s)\\
& =\sum_{s=0}^{t-1}\alpha^2(s)\sum_{r=s+1}^{t}\lambda^{r-1-s}\le\sum_{s=0}^{t-1}\frac{\alpha^2(s)}{1-\lambda}.
	\end{aligned}
\end{equation}
Hence, $\sum_{r=0}^{t}e(r)\le\frac{D_{13}}{1-\lambda}\alpha(0)+(\frac{D_{23}}{1-\lambda}+D_5)\sum_{r=0}^{t}\alpha^2(r)$. Therefore, we have
\begin{equation}\label{131}
	\hspace{-2mm}
\sum_{r=0}^{t}\!\bigg[\alpha(r)c^\top(w(r)-\theta^*)\!+\!\frac{D_4}{2}\Phi(r)\bigg]\!\le\! M_1\!+\!M_2\sum_{r=0}^{t}\alpha^2(r)
\end{equation}
with $M_1=R_1+\frac{D_{13}}{2(1-\lambda)}\alpha(0)$ and $M_2=\frac{1}{2}\big(\frac{D_{23}}{1-\lambda}+D_5\big)$.\\
{\bf Step (ii):} Then, we derive the lower bound on the left side of (\ref{131}). By convexity of $f_0$, it holds that
\begin{equation}\label{132}
	c^\top(\hat w(t)-\theta^*)\le\sum_{r=0}^{t}\frac{\alpha(r)c^\top(w(r)-\theta^*)}{\sum_{\tau=0}^{t}\alpha(\tau)}.
\end{equation}
Next, we relate the term $\Vert\hat\theta_i(t)-\hat{w}(t)\Vert$ to $\sum_{r=0}^{t}\Phi(r)$. By the triangle inequality, we have
\begin{equation}\label{133}
\Vert\hat\theta_i(t)-\hat{w}(t)\Vert\le\sum_{r=0}^{t}\frac{\alpha(r)\Vert\theta_i(r)-w(r)\Vert}{\sum_{\tau=0}^{t}\alpha(\tau)}.	
\end{equation}
We now quantify the numerator of the right side of (\ref{133}). Firstly, recall from the result in Proposition \ref{subproblem_important_result_2}(b) that
\begin{equation}\label{134}
	\Vert \theta_i(t)-\overline\theta(t)\Vert\le D_1\lambda^{t}+\sum_{s=0}^{t-1}\lambda^{t-s-1}[B\beta(s)+D_2\alpha(s)].
\end{equation}
Secondly, using Lemma \ref{regularity lemma} and the properties of the ${\rm dist}(\cdot)$ function, we have
\begin{equation}\label{135}
	\begin{aligned}
		&\Vert w(r)\!-\!\overline\theta(r)\Vert\!=\!{\rm dist}(\overline\theta(r),\Omega)\le R \max_{i\in\mathcal{V}} {\rm dist}(\overline\theta(r),\Omega_i)\\
		&\le R\sum_{i=1}^{m}{\rm dist}(\overline\theta(r),\Omega_i)\le R\sum_{i=1}^{m}\Vert\theta_i(r)-\overline\theta(r)\Vert,
	\end{aligned}
\end{equation}
By the triangle inequality and the inequalities (\ref{134}) and (\ref{135}), it follows that
\begin{equation}\label{136}
\frac{\Vert\theta_i(r)\!-\!w(r)\Vert}{(Rm\!+\!1)B}\!\le\!\frac{D_1}{B}\lambda^r\!+\!\sum_{s=0}^{r-1}\lambda^{r-1-s}\bigg(\frac{D_2}{B}\alpha(s)\!+\!\beta(s)\bigg),
\end{equation}
which yields (see the definition of $\psi(t)$ in Proposition \ref{subproblem_important_result_2}(c))
\begin{equation}\label{137}
\begin{aligned}
&\frac{1}{(Rm+1)B}\sum_{r=0}^{t}\alpha(r)\Vert\theta_i(r)-w(r)\Vert\\
&\le \frac{D_1}{B}\!\sum_{r=0}^{t}\alpha(r)\lambda^r\!+\!\frac{D_2}{B}\!\sum_{r=1}^{t}\alpha(r)\!\sum_{s=0}^{r-1}\lambda^{r-1-s}\alpha(s)\!+\!\sum_{r=0}^{t}\psi(r)\\
&\overset{(\ref{129}),(\ref{130})}{\le} \frac{D_1\alpha(0)}{(1-\lambda)B}+\frac{D_2}{(1-\lambda)B}\sum_{s=0}^{t}\alpha^2(s)+\sum_{r=0}^{t}\psi(r).
\end{aligned}
\end{equation}
The sum $\sum_{r=0}^{t}\psi(r)$ can be bounded as follows. Based on the result in Proposition \ref{subproblem_important_result_2}(c) and the fact that $\psi(0)=0$ and $\psi(t)\!\ge\! 0,\ \forall t\ge 1$, there is
\begin{equation}\label{138}
	\sum_{r=0}^{t}\psi(r)\le \lambda\sum_{r=0}^{t-1}\psi(r)+\sum_{r=0}^{t-1}\alpha(r)\beta(r).
\end{equation}
According to $\alpha\beta\le\frac{\alpha^2}{4}+\beta^2,\ \forall \alpha,\beta\in\mathbb{R}$, it holds that
\begin{equation}\label{139}
	\sum_{r=0}^{t}\psi(r)\le \frac{1}{1-\lambda}\sum_{r=0}^{t-1}\bigg(\frac{\alpha^2(r)}{4}+\beta^2(r)\bigg).
\end{equation}
Moreover, by Proposition \ref{subproblem_important_result_2}(a), we have $\beta^2(r)\le m\Phi(r)$. Thus
\begin{equation}\label{140}
	\sum_{r=0}^{t}\beta^2(r)\le m\sum_{r=0}^{t}\Phi(r).
\end{equation}
Using the inequalities (\ref{137}), (\ref{139}) and (\ref{140}), we obtain
\begin{equation}\label{141}
C_0\!\sum_{r=0}^{t}\alpha(r)\Vert\theta_i(r)\!-\!w(r)\Vert\!\le\! M_3\!+\!M_4\!\sum_{r=0}^{t}\alpha^2(r)\!+\!\frac{D_4}{2}\!\sum_{r=0}^{t}\Phi(r)
\end{equation}
with $C_0=\frac{(1-\lambda)D_4}{2m(Rm+1)B}$, $M_3=\frac{D_4D_1\alpha(0)}{2mB}$, $M_4=\frac{D_4}{2m}(\frac{D_2}{B}+\frac{1}{4})$. Combining the inequality (\ref{141}) with (\ref{131}) yields
\begin{equation}\label{142}
\begin{aligned}
&C_0\sum_{r=0}^{t}\alpha(r)\Vert\theta_i(r)-w(r)\Vert+\sum_{r=0}^{t}\alpha(r)c^\top(w(t)-\theta^*)\\
&\le (M_1+M_3)+(M_2+M_4)\sum_{r=0}^{t}\alpha^2(r),
\end{aligned}
\end{equation}
where $C_1=M_1+M_3$, $C_2=M_2+M_4$. Dividing both sides by $\sum_{r=0}^{t}\alpha(r)$ and then applying (\ref{132}) and (\ref{133}) leads to (\ref{125}).\\
{\bf{Step (iii):}} Since $\hat\theta_i(t)\in U$ for $\forall t\ge 0$, $\forall i\in\mathcal{V}$, it follows from triangle inequality and Lipschitz continuity of $f_0$ on $U$ that
\begin{equation}\label{143}
	\hspace{-2mm}
\begin{aligned}
	\vert c^\top (\hat\theta_i(t)-\theta^*)\vert&\le\vert c^\top (\hat\theta_i(t)-\hat w(t))\vert+c^\top(\hat w(t)-\theta^*)\\
	&\le C\Vert\hat\theta_i(t)-\hat w(t)\Vert+c^\top(\hat w(t)-\theta^*).
\end{aligned}
\end{equation}
Now, by (\ref{125}), both $C_0\Vert \hat\theta_i(t)-\hat w(t)\Vert$ and $c^\top(\hat w(t)-\theta^*)$ are bounded above by $E(t)$. Hence, (\ref{126}) is satisfied.
\end{proof}
\par
{\bf \emph{Proof of Theorem 2:}}
\par
This result follows from Proposition \ref{subproblem_important_result_4}.
\hfill $\blacksquare$
\bibliographystyle{unsrt}

\vspace{-3.9em}
\begin{IEEEbiography}[{\includegraphics[width=1in,height=1.25in,clip,keepaspectratio]{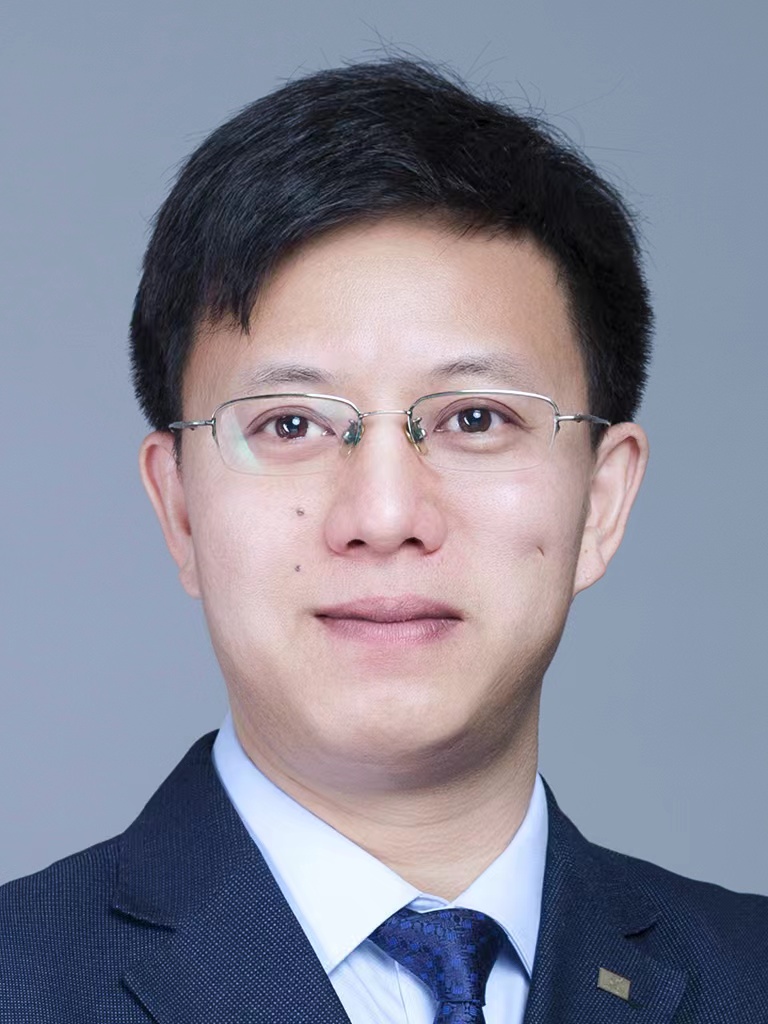}}]{Jun Fu}
	(Senior Member, IEEE) received the Ph.D. degree in mechanical engineering from Concordia University, Montreal, QC, Canada, in 2009. 
	He was a Postdoctoral Researcher with the Department of Mechanical Engineering, Massachusetts Institute of Technology (MIT), Cambridge, MA, USA, from 2010 to 2014. In 2015 he became a professor at  the State Key Laboratory of Synthetical Automation for Process Industries, Northeastern University, China, and has been working at the State Key Lab since leaving MIT. His current research is on dynamic optimization, machine learning, switched systems and their applications. He is currently an Associate Editor for the Control Engineering Practice, the IEEE Transactions on Industrial Informatics, and the IEEE Transactions on Neural Networks and Learning Systems.
\end{IEEEbiography}
\vspace{-35pt}
 \begin{IEEEbiography}[{\includegraphics[width=1in,height=1.25in,clip,keepaspectratio]{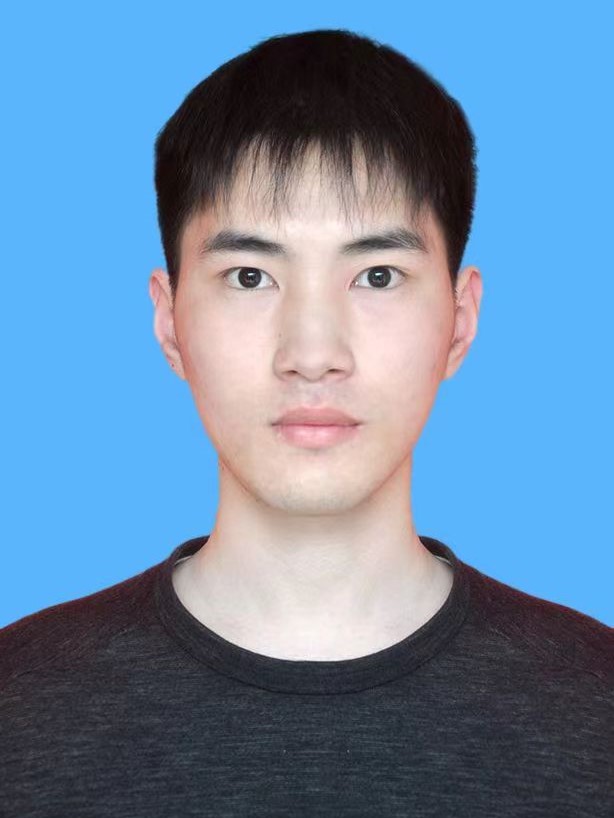}}]{Xunhao Wu}  
 	received the B.S. degree in automation from Northeastern University, Shenyang, China, in 2021, and the M.S. degree in control theory and control engineering from the State Key Laboratory of Synthetical Automation for Process Industries, Northeastern University, Shenyang, China, in 2024. He is currently pursuing the Ph.D. degree in aeronautical and aviation engineering from The Hong Kong Polytechnic University, Hong Kong, China.
 	His current research interests cover robust optimization and its application in multi-agent systems.
 \end{IEEEbiography}
\end{document}